\title{Clique packings in random graphs}
\author{SIMON GRIFFITHS \and LETÍCIA MATTOS}
\thanks{\rule[-.2\baselineskip]{0pt}{\baselineskip}
This project suggested to us by Robert Morris, and the authors would like to thank him for introducing us to the problem and suggesting the use of the method of differential equation to potentially solve it.\phantom{and}\hspace{10mm}
	SG was partially supported by FAPERJ (Proc. 201.194/2022) and by CNPq (Proc. 307521/2019-2), and 
	LM was partially supported by CAPES (Proc. 88887.807279/2023-00).
}
\address{Simon Griffiths \newline
	Departamento de Matemática, PUC-Rio, Rua Marquês de São Vicente 225, Gávea, 22451-900 Rio de Janeiro, Brasil \newline
	E-mail: \texttt{E-mail:simon@puc-rio.br}}
\address{Letícia Mattos \newline
	Institut für Informatik, Universität Heidelberg, Im Neuenheimer Feld 205, 69120 Heidelberg, Germany. \texttt{E-mail:mattos@uni-heidelberg.de}}
\newtheorem{theorem}{Theorem}[section]
\newtheorem{lemma}[theorem]{Lemma}
\newtheorem{prop}[theorem]{Proposition}
\newtheorem{claim}[theorem]{Claim}
\newtheorem{conjecture}[theorem]{Conjecture}
\theoremstyle{definition}
\newtheorem{defn}{Definition}
\theoremstyle{remark}
\def\A{\mathcal{A}}
\def\C{\mathcal{C}}
\def\F{\mathcal{F}}
\def\K{\mathcal{K}}
\def\NN{\mathcal{N}}
\def\S{\mathcal{S}}
\def\R{\mathbb{R}}
\def\Pr{\mathbb{P}}
\def\tq{\widetilde{Q}}
\def\ty{\widetilde{Y}}
\newcommand{\eps}{\varepsilon}
\newcommand{\Ex}[1]{\mathbb{E}\left[#1\right]}
\newcommand{\Exm}[1]{\mathbb{E}_{m_+}\left[#1\right]}
\newcommand{\pr}[1]{\mathbb{P}\left(#1\right)}
\newcommand{\se}{\subseteq}
\newcommand{\eq}[1]{\begin{equation}\label{eq:#1}}
	\newcommand{\eqe}{\end{equation}}
\newcommand{\eqr}[1]{\eqref{eq:#1}}
\let\save@mathaccent\mathaccent
\newcommand*\if@single[3]{%
	\setbox0\hbox{${\mathaccent"0362{#1}}^H$}%
	\setbox2\hbox{${\mathaccent"0362{\kern0pt#1}}^H$}%
	\ifdim\ht0=\ht2 #3\else #2\fi
}
\newcommand*\rel@kern[1]{\kern#1\dimexpr\macc@kerna}
\newcommand*\widebar[1]{\@ifnextchar^{{\wide@bar{#1}{0}}}{\wide@bar{#1}{1}}}
\newcommand*\wide@bar[2]{\if@single{#1}{\wide@bar@{#1}{#2}{1}}{\wide@bar@{#1}{#2}{2}}}
\newcommand*\wide@bar@[3]{%
	\begingroup
	\def\mathaccent##1##2{%
		%Enable nesting of accents:
		\let\mathaccent\save@mathaccent
		%If there's more than a single symbol, use the first character instead (see below):
		\if#32 \let\macc@nucleus\first@char \fi
		%Determine the italic correction:
		\setbox\z@\hbox{$\macc@style{\macc@nucleus}_{}$}%
		\setbox\tw@\hbox{$\macc@style{\macc@nucleus}{}_{}$}%
		\dimen@\wd\tw@
		\advance\dimen@-\wd\z@
		%Now \dimen@ is the italic correction of the symbol.
		\divide\dimen@ 3
		\@tempdima\wd\tw@
		\advance\@tempdima-\scriptspace
		%Now \@tempdima is the width of the symbol.
		\divide\@tempdima 10
		\advance\dimen@-\@tempdima
		%Now \dimen@ = (italic correction / 3) - (Breite / 10)
		\ifdim\dimen@>\z@ \dimen@0pt\fi
		%The bar will be shortened in the case \dimen@<0 !
		\rel@kern{0.6}\kern-\dimen@
		\if#31
		\overline{\rel@kern{-0.6}\kern\dimen@\macc@nucleus\rel@kern{0.4}\kern\dimen@}%
		\advance\dimen@0.4\dimexpr\macc@kerna
		%Place the combined final kern (-\dimen@) if it is >0 or if a superscript follows:
		\let\final@kern#2%
		\ifdim\dimen@<\z@ \let\final@kern1\fi
		\if\final@kern1 \kern-\dimen@\fi
		\else
		\overline{\rel@kern{-0.6}\kern\dimen@#1}%
		\fi
	}%
	\macc@depth\@ne
	\let\math@bgroup\@empty \let\math@egroup\macc@set@skewchar
	\mathsurround\z@ \frozen@everymath{\mathgroup\macc@group\relax}%
	\macc@set@skewchar\relax
	\let\mathaccentV\macc@nested@a
	%The following initialises \macc@kerna and calls \mathaccent:
	\if#31
	\macc@nested@a\relax111{#1}%
	\else
	%If the argument consists of more than one symbol, and if the first token is
	%a letter, use that letter for the computations:
	\def\gobble@till@marker##1\endmarker{}%
	\futurelet\first@char\gobble@till@marker#1\endmarker
	\ifcat\noexpand\first@char A\else
	\def\first@char{}%
	\fi
	\macc@nested@a\relax111{\first@char}%
	\fi
	\endgroup
}
\begin{document}
	
	\begin{abstract}
		
		We consider the question of how many edge-disjoint near-maximal cliques may be found in the dense Erd\H os-Rényi random graph $G(n,p)$.  Recently Acan and Kahn showed that the largest such family contains only $O(n^2/(\log{n})^3)$ cliques, with high probability, which disproved a conjecture of Alon and Spencer.  We prove the corresponding lower bound, $\Omega(n^2/(\log{n})^3)$, by considering a random graph process which sequentially selects and deletes near-maximal cliques.  To analyse this process we use the Differential Equation Method.  We also give a new proof of the upper bound $O(n^2/(\log{n})^3)$ and discuss the problem of the precise size of the largest such clique packing. 
	\end{abstract}
	
	\maketitle

	\section{Introduction}
	
	Let $G(n,p)$ be the Erd\H{o}s--R\'enyi random graph on the vertex set $[n]:=\{1,2,\ldots,n\}$.
	Our interest lies in the size of the maximum $k$-clique packing in $G(n,p)$, which is simply a collection of edge-disjoint cliques.
	Frankl and R\"odl~\cite{FR85} were the pioneers in the study of packings in random graphs.
	In 1985, they showed that if $k$ is constant and $p \gg n^{\eps-2/(k+1)}$, then there is a \emph{nearly perfect} $k$-clique packing in $G(n,p)$ with high probability.
	That is, there exists a collection of edge-disjoint $k$-cliques covering almost all edges in $G(n,p)$.
    Up to logarithmic factors, $n^{-2/(k+1)}$ is the threshold for which every edge is contained in a $k$-clique. If $p \ll n^{-2/(k+1)}$, then this no longer holds, and hence we cannot have a near perfect $k$-clique packing in $G(n,p)$.
	
	In order to delve into what is known for non-constant values of $k$, we need some notation.
	Let  $E_p(n,k)$ denote the expected number of $k$-cliques in $G(n,p)$.
	That is,
	\[
	E_p(n,k)\, :=\, \binom{n}{k}p^{\binom{k}{2}}\, .
	\] 
	We define $k_0=k_0(n,p)$ to be the least integer for which the expected number of $k$-cliques in $G(n,p)$ is less than $1$, that is, $E_p(n,k_0)<1$.  In fact, $k_0$ is of the form
	\[
	k_0\, =\, (2+o(1)) \log_{1/p}{n}\, .
 \]
In the 1970s Matula~\cite{Matula70,Matula72} and, independently, Bollobás and Erd\H{o}s~\cite{BE} showed that the largest clique in $G(n,p)$ has either $k_0$ or $k_0-1$ vertices with high probability.  Therefore a clique of size $k_0-C$ for $C$ a constant, may be considered a near-maximal clique.  Incidentally, they also gave a more precise asymptotic expression for $k_0$.

	In the late 1980s, Bollobás~\cite{B88} showed that if $p \in (0,1)$ is constant and $k=k_0-4$, then $G(n,p)$ contains a $k$-clique packing of size $\Omega(n^2/k^4)$ with high probability.
	Given Bollob\'as' result it is natural to ask: how large can a packing of near-maximal cliques be?  
 
 In the book \emph{The Probabilistic Method}~\cite{AS}, by Alon and Spencer, it was conjectured that if $p \in (0,1)$ is constant and $k=k_0-4$, then $G(n,p)$ has $\Omega(n^2/k^2)$ edge-disjoint $k$-cliques with high probability.
	Note that this would be best possible, as we always have $\binom{n}{2}/\binom{k}{2}$ which is $O(n^2/k^2)$ as a trivial upper bound.
	Acan and Kahn~\cite{AK} recently disproved this conjecture by showing that for every constant $C\in \mathbb{N}_{\ge 0}$, setting $k=k_0-C$ we have that every $k$-clique packing in $G(n,p)$ has size at most $O(n^2/k^3)$ with high probability.
	Our main result gives a lower bound of the same order of magnitude.

	\begin{theorem}\label{thm:C} Let $p\in (0,1)$ and $C \in \mathbb{N}_{\ge 4}$ be constants and let $k=k_0-C$.  Then, with high probability, the random graph $G(n,p)$ contains at least 
		\[
		\frac{pn^2\log{n}}{40k^4}
		\]
		edge-disjoint $k$-cliques.
	\end{theorem}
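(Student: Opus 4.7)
The plan is to analyse the clique-deletion process suggested in the abstract. Set $H_0 := G(n,p)$ and, for $t \ge 1$, pick a uniformly random $k$-clique $K_t$ in $H_{t-1}$, add $K_t$ to the packing and define $H_t := H_{t-1} \setminus E(K_t)$. Writing $Q_t$ for the number of $k$-cliques in $H_t$, the cliques $K_1, \dots, K_t$ are edge-disjoint by construction, so it suffices to show that with high probability $Q_t \ge 1$ for every $t \le T^\ast := pn^2 \log n/(40 k^4)$, i.e.\ that the process runs for at least $T^\ast$ steps.

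The main tool is the Differential Equation Method. The heuristic trend is as follows. In $G(n,p)$ each edge lies in approximately $\alpha := \binom{n-2}{k-2} p^{\binom{k}{2}-1}$ many $k$-cliques, so removing a typical $K_t$ destroys on average $\binom{k}{2}\alpha$ other $k$-cliques; this is the fraction $\binom{k}{2}^2/(p\binom{n}{2}) \approx k^4/(2pn^2)$ of all surviving $k$-cliques. Hence
\[
\mathbb{E}\bigl[Q_{t-1}-Q_t \mid H_{t-1}\bigr] \;\approx\; \frac{k^4}{2pn^2}\,Q_{t-1},
\]
so $Q_t$ should closely follow the deterministic trajectory $q(t) := Q_0 \exp(-t k^4/(2pn^2))$. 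Using $p^{-k_0} \approx n^2$ together with the recursion $E_p(n,k-1)/E_p(n,k) = (k/(n-k+1))\,p^{-(k-1)} \approx kn$, the starting value $Q_0 = (1+o(1))E_p(n,k)$ is polynomially large, roughly of order $(n\log n)^C$. In particular $q(T^\ast) \ge Q_0 \cdot n^{-1/80}$ is much larger than $1$ once $C \ge 4$, leaving generous slack for fluctuations.

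To make the trend rigorous in tandem with concentration, I would track, alongside $Q_t$, the edge-wise clique counts $Y_e(t)$ defined as the number of $k$-cliques of $H_t$ that contain $e$, for each surviving edge $e \in E(H_t)$. Each $Y_e(t)$ should follow the same exponential trend, and the identity $\binom{k}{2} Q_t = \sum_{e \in E(H_t)} Y_e(t)$ relates the ensemble to $Q_t$. I would then run the standard DE-method template: introduce a stopping time $\tau$ at which any of the tracked quantities first leaves a small tube around its target trajectory, bound the one-step conditional drift and the one-step absolute change of the corresponding martingales, and invoke Freedman's inequality together with a union bound over $O(n^2)$ tracked quantities to conclude $\tau > T^\ast$ with high probability.

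The main obstacle is obtaining the bounded-differences hypothesis: a uniform, high-probability upper bound on how many $k$-cliques in $G(n,p)$ share an edge with any fixed $k$-clique. For a typical $K$ this count is approximately $\binom{k}{2}\alpha$; I would show via a second-moment computation that no $K$ has anomalously many clique neighbours, exploiting the fact that for $k$ close to $k_0$ the contribution from pairs of $k$-cliques overlapping on $j \ge 3$ vertices is negligible compared to the $j=2$ contribution (a single shared edge), because each extra shared vertex forces extra independent-looking edges whose probability is small. The same second-moment calculation yields the initial concentration of $Q_0$ and of all $Y_e(0)$, seeding the DE-method induction. With both ingredients in hand the Freedman argument produces $Q_t \ge q(t)/2 \gg 1$ throughout $t \le T^\ast$, from which Theorem~\ref{thm:C} follows.
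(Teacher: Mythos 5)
Your proposal follows essentially the same route as the paper: the same uniform clique-removal process, the same tracked quantities $Q$ and $Y_e$, the same trajectory $Q_0\exp(-tk^4/(2pn^2))$, and the same stopping-time/supermartingale/Freedman machinery with a union bound over the $O(n^2)$ edge variables. The outline of the dynamic part is sound, and your $T^\ast$ coincides with the paper's $m_*$ (the paper caps the analysis at $\delta pn^2\log n/(4k^4)$ with $\delta=1/10$ because the \emph{relative error} function, which starts at $n^{-\delta}$ and grows like $\exp(k^4m/(pn^2))$, must stay below $1$ — not because the trajectory itself approaches $1$; your slack computation addresses the latter, easier, constraint).

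The genuine gap is your plan to obtain both the uniform bounded-differences input and the initial concentration of all $Y_e(0)$ from a second-moment computation. A second moment plus Chebyshev gives, for a single edge $e$ with $\mu=\mathbb{E}[Y_e(G_0)]=n^{\gamma-2+o(1)}$, a failure probability of order $\bar{\Delta}/(\eps\mu)^2 = n^{o(1)}\max\{\mu^{-1},n^{-1}\}$, which for $C=4$ (where $\gamma$ can be as small as $3+o(1)$) is only $n^{-1+o(1)}$. This cannot survive the union bound over $\Theta(n^2)$ edges, let alone over the $\Theta(n^3)$ triples $S$ whose clique counts $Y_S$ you implicitly need to control the maximum one-step change of the $Y_e$ process (the change of $Y_e$ in one step is bounded by $\binom{k}{2}\max_{|S|=3}Y_S$, not merely by the number of cliques meeting the removed clique in an edge). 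The paper gets around this by bounding $\ell$-th moments for a large constant $\ell$ (via a delicate classification of clique sequences by overlap size), which yields per-object failure probability $n^{-7}$ and hence survives all union bounds, and then upgrades to two-sided $1\pm n^{-2\delta}$ concentration of $Y_e(0)$ using the deletion method (upper tail) and Janson's inequality (lower tail). Also note that your assertion that overlaps on $j\ge 3$ vertices are negligible is only half the story: the near-complete-overlap terms contribute $\approx\mu$ to $\bar{\Delta}$, which is comparable to the $\mu^2/n$ term precisely in the regime $\gamma\approx 3$ relevant to $C=4$; the paper's Claim 3.5 handles the full range of overlaps. So the architecture of your argument is the right one, but the concentration inputs need substantially stronger tools than a second moment.
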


 The constant given by Theorem~\ref{thm:C} is not best possible.  Finding the asymptotic size of the largest packing of near-maximal cliques remains an intriguing open question.  We return to this subject below, see Conjecture~\ref{conj:lower} and the discussion in Section~\ref{sec:upper-bound-discussion}.
	
	Let us briefly comment on what happens for cliques which are even closer to the maximum size, with $k=k_0-C$ for $C \in \{0,1,2\}$ and $p\in (0,1)$ constant.
	It is not hard to show that if $k= k_0-C$ and $E_p(n,k) < n^{2-\eps}$, for some $\eps >0$, then the size of the maximum  $k$-clique packing in $G(n,p)$ is of order $E_p(n,k)$ with probability at least $1-o_{p}(1)$.
	This happens because in this regime very few edges belong to more than one $k$-clique, and hence most $k$-cliques are edge-disjoint.

In fact, we prove a more general theorem (Theorem \ref{thm:main}), which includes Theorem \ref{thm:C} and extends it in three respects, which includes
 \begin{itemize}
 \item the case $C=3$,
 \item the case where $k_0-k$ goes to infinity, and
 \item the case $p=o(1)$, as long as $p=n^{o(1)}$.
 \end{itemize}

	Let us now discuss the challenges associated with the case $C=3$.  We shall see that the parameter $\gamma=\gamma(p,n,k)$ given by
	\[\gamma := \dfrac{\log E_p(n,k)}{\log n}\]
 plays a central role.
	The advantage of introducing this parameter is that we can write $E_p(n,k)=n^{\gamma}$.  
	The relation between $C$ and $\gamma$ is that we have 
 \eq{Cgamma}
 C-1+o(1)\, \le \, \gamma\,  \le\, C+o(1)
 \eqe
 see Lemma~\ref{lemma:expectation-k-c}, in the appendix, for a proof.
 	Theorem~\ref{thm:main} will include the case where $\gamma$ is bounded away from $2$, which includes the case $C=3$ almost completely.  
    We say ``almost completely'' because it is possible to choose an increasing subsequence $(n_i)_{i \ge 1}$ and set $p = n_i^{o(1)}$ so that $\gamma(p,n_i,k_0-3)$ tends to $2$ as $i$ tends to infinity.
 
    Now we observe that the parameter $\gamma=\gamma(p,n,k)$ is bounded above by a constant either when $k$ is small (and hence $\gamma$ is approximately $k$) or $k$ is near-maximum. 
    We briefly explain why.
    Let $k$ be such that $1\ll k \le k_0$ and $k_0-k \gg 1$. 
    Then, the expected number of $k$-cliques is
\begin{align*}
    \binom{n}{k}p^{\binom{k}{2}} \ge \left (\dfrac{p^{\frac{k-1}{2}}n}{k} \right )^k \ge \left (p^{\frac{k-k_0}{2}} \cdot \dfrac{p^{\frac{k_0-1}{2}}n}{k_0} \right )^k \ge \left (\Theta \left (p^{\frac{k-k_0}{2}} \right ) \right )^k.
\end{align*}
The claim follows by noticing that either $k_0-k$ or $k$ is at least $\frac{1}{4}\log_{1/p} n$.
As our results focus on the case where $k$ is near-maximum, we therefore impose the condition that $k \ge \log_{1/p} n$.

    We now formally define the cases covered by our more general theorem.  Given $\gamma_->2$ and $p_+\in (0,1)$, let us define $\Lambda_{\gamma_-,p_+}$ to be the family of all triples $(p,\gamma,k)$ where:
	\begin{enumerate}
		\item[(i)] $p=p_n$ is a sequence with $p_n \in (0,p_{+})$ for all $n\ge 1$ and $\log{p_n}/\log{n}\to 0$ (i.e., $p=n^{o(1)}$),
		\item[(ii)] $k=k_n$ is a sequence such that $k\ge \log_{1/p}{n}$,
		\item[(iii)] $\gamma=\gamma_n$ is the sequence $\gamma_n=\gamma(p_n,n,k_n)$, and $\gamma_n\ge \gamma_-$ for all $n\ge 1$. 
	\end{enumerate}
	Observe that if $(p,\gamma,k) \in \Lambda_{\gamma_-,p_+}$, then $\log_{1/p} n \le k \le k_0 \le 2.1 \log_{1/p} n$, for all sufficiently large $n$, as $E_p(n,k)\le 1$ if $k>k_0$.
 	
	\begin{theorem}\label{thm:main}  
		Let $\gamma_- >2$ and $p_+\in (0,1)$, and let $(p,n,k)\in \Lambda_{\gamma_-,p_+}$.  Then, with high probability, the random graph $G(n,p)$ contains at least
		\[
		\min \left\{\dfrac{\min\{\gamma-2,1\}pn^2\log{n}}{40k^4}\phantom{\bigg|}, \dfrac{pn^2}{2k^2}\right\}
		\]
		edge-disjoint $k$-cliques.
	\end{theorem}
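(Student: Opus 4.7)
The plan is to analyze a random greedy clique-deletion process. Set $G_0 := G(n,p)$, and for each $i \ge 1$, if $G_{i-1}$ contains a $k$-clique, let $K_i$ be a uniformly random $k$-clique of $G_{i-1}$ and set $G_i := G_{i-1} \setminus E(K_i)$. The cliques $K_1, K_2, \ldots$ are edge-disjoint by construction, and writing $T$ for the total number of cliques produced, the goal reduces to showing $T \ge t^*$ with high probability, where
\[
t^*\, :=\, \min\!\left\{\frac{\min\{\gamma-2,1\}\,pn^2\log{n}}{40k^4},\; \frac{pn^2}{2k^2}\right\}.
\]

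The key observables are $M(i) := |E(G_i)|$, which is deterministic ($M(i) = M(0) - i\binom{k}{2}$), and $Q(i) := $ the number of $k$-cliques in $G_i$. Heuristically, if $G_i$ behaves like a random graph of density $p_i := p - i\binom{k}{2}/\binom{n}{2}$, then
\[
Q(i)\, \approx\, \binom{n}{k}p_i^{\binom{k}{2}}\, =\, n^{\gamma}(p_i/p)^{\binom{k}{2}}\, =\, n^{\gamma}\exp\!\left(-(1+o(1))\,\frac{ik^4}{2pn^2}\right),
\]
using $\log(1-x) = -x + O(x^2)$ with $x = i\binom{k}{2}/(\binom{n}{2}p) = O(\log n/k^2) = o(1)$, which holds throughout the run since $k \ge \log_{1/p}n$. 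This expression stays comfortably above $n^2$ whenever $i \le 2(\gamma-2)pn^2\log n/k^4$, well beyond $t^*$; so long as the true graph actually tracks this trajectory, the process cannot terminate before step $t^*$. The second entry $pn^2/(2k^2)$ in the definition of $t^*$ is simply the trivial ceiling from the total edge budget.

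To make this heuristic rigorous I would use the Differential Equation Method. The core computation is the conditional one-step drift $\mathbb{E}[Q(i)-Q(i-1)\mid G_{i-1}]$: removing the edges of a uniformly random $k$-clique $K$ destroys every other $k$-clique of $G_{i-1}$ sharing an edge with $K$, which in a quasi-random host graph of density $p_i$ is approximately $\binom{k}{2}\binom{n-2}{k-2}p_i^{\binom{k}{2}-1}$. A short calculation shows this matches, to leading order, the derivative of the target trajectory in $i$, and concentration of $\log Q(i)$ around $\gamma\log n - ik^4/(2pn^2)$ then follows from Freedman's martingale inequality applied with a stopping time at the first large deviation.

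The main obstacle — and the real content of the DE-method analysis here — is that one cannot track $Q(i)$ in isolation, because the conditional drift depends on finer extension statistics of $G_{i-1}$, and these can be destroyed on the rare bad steps. The remedy is to propagate, simultaneously with $Q(i)$, a whole family of invariants: for every small vertex subset $S$, the number of $k$-cliques of $G_{i-1}$ extending $S$, and more generally the number of copies of small subgraphs. Maintaining concentration of this entire family until the stopping time, via a union bound, certifies that each $G_{i-1}$ is sufficiently quasi-random for the drift calculation to apply at the next step. Iterating this bootstrap through all $i \le t^*$ yields $Q(i) \ge n^{2-o(1)} \gg 1$ throughout, so the process survives to step $t^*$ with high probability, producing the required number of edge-disjoint $k$-cliques.
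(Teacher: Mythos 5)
Your proposal follows essentially the same route as the paper: the same greedy $k$-clique removal process, the same trajectory $n^{\gamma}\exp(-ik^4/(2pn^2))$ for the clique count, and the same Differential Equation Method analysis in which one must co-track extension statistics (the paper tracks $Q(G_m)$ and the edge-counts $Y_e(G_m)$ dynamically, and handles the triple-counts $Y_S$ only via initial bounds plus monotonicity, which is a lighter version of the "whole family of invariants" you propose). The one imprecision worth flagging is your claim that $i\binom{k}{2}/(p\binom{n}{2})=O(\log n/k^2)=o(1)$ throughout: when $p$ is small and the minimum in $t^*$ is $pn^2/(2k^2)$, this ratio is only bounded by $1/2$ rather than $o(1)$, which is why the paper defines the trajectory as an exact product rather than via the exponential approximation; this does not affect the validity of the overall approach.
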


	For $p \in (0,1)$ and $C\in \mathbb{N}_{\ge 4}$ constants and $k = k_0 -C$, we have $C-1+o(1) \le \gamma \le C+o(1)$ (see Lemma~\ref{lemma:expectation-k-c}).
    Therefore, Theorem~\ref{thm:main} immediately implies Theorem~\ref{thm:C}.
	To prove Theorem~\ref{thm:main} we consider a random graph process which sequentially selects and deletes $k$-cliques. 
	
 The minimum in Theorem~\ref{thm:main}  is necessary and states that either the $k$-clique removal process deletes half of the edges of $G(n,p)$, or the process lasts for at least {$\min\{\gamma-2,1\}pn^2\log{n}/(40k^4)$} steps.  In some sense this result partially vindicates the original conjecture of Alon and Spencer.
 More precisely, when $p \le e^{-\Omega(\sqrt{\log n})}$, it is possible to cover a positive proportion of the edges of $G(n,p)$ with edge-disjoint near-maximal cliques.  The threshold $e^{-\Theta(\sqrt{\log n})}$ is relevant as $pn^2/(2k^2)$ is the minimum if $p \le e^{-C\sqrt{\log n}}$, and {$\min\{\gamma-2,1\}pn^2\log{n}/(40k^4)$}
 is minimum if $p \ge e^{-c\sqrt{\log n}}$, for some constants $C,c>0$.

	We also provide upper bounds on the size of the largest $k$-clique packing for $k = k_0-C$, for every constant $C \in \mathbb{N}_{\ge 3}$.
	Although upper bounds were already obtained by Acan and Kahn~\cite{AK},
	we include a new proof which gives an upper bound with a linear dependence on $\gamma$ rather than an exponential dependence as in~\cite{AK}.
	For simplicity, we first consider only the case where $C \ge 4$.
	At the end of this section, we also state a general version of our theorem which hold under mild assumptions when $\gamma$ is not constant and $p$ goes to $0$.

	\begin{theorem}\label{thm:intro:upper:weak}
		Let $p\in (0,1)$ and $C \in \mathbb{N}_{\ge 4}$ be constants and $k=k_0-C$.   Then, except with probability at most $\exp(-n^{1+o(1)})$,
		the random graph $G(n,p)$ does not contain any family of 
		\[
		\dfrac{5(C - 2)}{1-p} \cdot \dfrac{pn^2 \log n }{k^4}
		\]
		edge-disjoint $k$-cliques.
	\end{theorem}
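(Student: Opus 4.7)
My plan is to combine Janson-type concentration of clique counts in $G(n,p)$ with a double-counting argument that exploits the edge-disjointness of a putative packing. First, I would apply Janson's inequality to show that, for each $t \in \{k, k+1, \ldots, k_0\}$, the count $X_t$ of $t$-cliques in $G(n,p)$ deviates from its mean $E_p(n,t) = n^{\gamma_t}$ by at most a constant factor, except with probability $\exp(-n^{1+o(1)})$. The critical case is $t = k_0-1$: the defining relation $E_p(n, k_0) < 1 \le E_p(n, k_0-1)$, combined with the ratio $E_p(n, k-1)/E_p(n, k) = k/[(n-k+1)p^{k-1}]$, yields $E_p(n, k_0-1) \asymp n/(1-p)$, which is the source of the $(1-p)^{-1}$ factor in the stated bound.

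Next, let $\F$ be a putative edge-disjoint packing of $N$ many $k$-cliques in $G(n,p)$.  Any two members of $\F$ share at most one vertex, since sharing two vertices forces sharing the edge between them.  Consequently, each $(k_0-1)$-clique $L$ in $G(n,p)$ contains at most one member of $\F$ as a subset: two distinct $k$-subsets of $L$ would share $2k - (k_0-1) = k - C + 1 \ge 3$ vertices for $k \ge C+2$, which again forces shared edges.  Writing $\mathrm{ext}(K)$ for the number of $(k_0-1)$-extensions of $K$ in $G(n,p)$, we obtain
\[
\sum_{K \in \F} \mathrm{ext}(K) \;\le\; X_{k_0-1}.
\]

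The main obstacle is producing a sufficiently strong lower bound on $\sum_{K \in \F}\mathrm{ext}(K)$.  The per-clique expected extension count $\mu = \binom{n-k}{C-1} p^{(C-1)k + \binom{C-1}{2}}$ is typically sub-constant, so direct per-$K$ concentration is too weak.  I would instead localise at each vertex $v$: the cliques of $\F$ through $v$ restrict to a \emph{vertex-disjoint} family of $(k-1)$-cliques in the random subgraph $N(v) \cap G(n,p)$, which is distributed like $G(np,p)$.  In this local instance, the $(k-1)$-cliques lie at distance $C-1$ from the maximum clique size, and a direct analysis in $G(np,p)$ (with careful bookkeeping to ensure the linear dependence on $C$, by tracking how the $(1-p)^{-1}$ factor propagates under the $n \mapsto np$ substitution) yields a per-vertex bound on $d_v$.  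Summing via $N = \sum_v d_v/k$, together with the concentration of Step~1 and the identity $\log n \asymp k$ valid throughout the range $\Lambda_{\gamma_-,p_+}$, then gives the claimed bound $N \le 5(C-2)/(1-p)\cdot pn^2 \log n/k^4$, after collecting constants.
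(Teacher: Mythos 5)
Your opening double-count is sound as far as it goes: edge-disjointness does imply that any two members of $\F$ meet in at most one vertex, and hence that each $(k_0-1)$-clique of $G(n,p)$ contains at most one member of $\F$, giving $\sum_{K\in\F}\mathrm{ext}(K)\le X_{k_0-1}$. But, as you yourself observe, this inequality is vacuous here because $\Ex{\mathrm{ext}(K)}=n^{-(C-1)+o(1)}$, so almost every $K\in\F$ has no $(k_0-1)$-extension at all; the entire burden of the proof is then shifted onto the vertex-localisation step, and that step has a genuine gap. After restricting to a neighbourhood $N(v)$, the only structure you retain from edge-disjointness is that the links $\{K\setminus\{v\}:K\in\F,\,v\in K\}$ form a \emph{vertex-disjoint} family of $(k-1)$-cliques in a graph distributed as $G(np,p)$. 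You need the per-vertex count $d_v$ to be $O(pn\log n/k^3)$, but the only general bound for vertex-disjoint families is the trivial $np/(k-1)$, which exceeds your target by a factor of order $k^2/\log n=\Theta(k)$. No argument is offered for beating it: a first-moment count of families of $m$ vertex-disjoint $(k-1)$-cliques in $G(np,p)$ is roughly $E^m/m!$ with $E=E_p(np,k-1)=(np)^{C-2+o(1)}$, which does not tend to zero until $m\gg np/k$ for $C\ge4$, so the local problem is not resolved by the same elementary tools, and bounding vertex-disjoint packings of near-maximal cliques is not a known-easier problem than the edge-disjoint one you started with. The phrase ``a direct analysis in $G(np,p)$ \ldots\ yields a per-vertex bound'' is precisely the missing proof. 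A secondary but real error: the claim $E_p(n,k_0-1)\asymp n/(1-p)$ does not follow from the definition of $k_0$, which only constrains $E_p(n,k_0-1)$ to lie between $n^{o(1)}$ and $n^{1+o(1)}$, and in any case Janson's inequality is a lower-tail bound, so it does not give the two-sided concentration of $X_{k_0-1}$ at rate $\exp(-n^{1+o(1)})$ that you invoke.

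For comparison, the paper obtains Theorem~\ref{thm:intro:upper:weak} as an immediate corollary of Theorem~\ref{thm:upper}, which is a pure first-moment computation: one bounds the expected number of families of $t$ edge-disjoint $k$-cliques, working in $G(n,m)$ with $m=\lceil p\binom{n}{2}+(p n^2)^{3/4}\rceil$ rather than $G(n,p)$. The factor $(1-p)^{-1}$ in the bound has nothing to do with $E_p(n,k_0-1)$; it arises from the negative correlation of edges in $G(n,m)$, namely from the identity $\pr{E_{\K}\se G(n,m)}=(m)_{t\binom{k}{2}}/(N)_{t\binom{k}{2}}=p^{t\binom{k}{2}}\exp\bigl(-(1-p)t^2k^4/(8pN)+o(\cdot)\bigr)$, which supplies the extra decaying factor that makes $\Ex{X_t}$ vanish at $t=\Theta(pn^2\log n/k^4)$. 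If you want to salvage your approach, you would need either a genuinely new bound on vertex-disjoint packings of near-maximal cliques in $G(np,p)$, or to abandon localisation and use the global edge-disjointness directly, as the paper does.
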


	The remainder of the introduction consists of subsections on the intuition behind the proof of Theorem~\ref{thm:main}; a discussion on upper bounds, including Theorem~\ref{thm:intro:upper:weak} and the more general version Theorem~\ref{thm:upper}; a quick overview of related articles; and the plan for the remainder of the article.

 \subsection{Intuition for the lower bounds}
	
	Here we discuss the heuristic behind the proof of Theorem~\ref{thm:main}, without too many details. 
	We will return to it in Section~\ref{sec:over}, where we give a detailed overview of the proof.  Let $G_0\sim G(n,p)$, and let
 \[
 m_1\, :=\, \dfrac{\min\{\gamma-2,1\}pn^2\log{n}}{40k^4} \quad \text{and} \quad m_{2}\,:=\, \dfrac{pn^2}{2k^2}\, ,
 \]
so that Theorem~\ref{thm:main} states that $G_0$ contains at least $\min\{m_1,m_2\}$ edge-disjoint $k$-cliques with high probability.

In order to simplify the presentation of this heuristic overview we restrict our attention to the case in which $m_1\ll m_2$.  In particular, in this case the number of edges removed in $m_1$ steps is $m_1\binom{k}{2}\ll pn^2/2\approx e(G_0)$.
	
Our approach involves a random process $(G_m)_{m= 0}^{M}$ which starts with a graph $G_0\sim G(n,p)$, and from which we remove a uniformly selected $k$-clique $K$ at each step. 
	The process halts when there are no $k$-cliques left to remove.
	This produces a sequence of graphs $(G_m)_{m= 0}^{M}$, where the last graph of the sequence,  $G_M$, is a $K_k$-free graph.
	Note that $M$, the number of steps of the process, is a random variable.  
	Moreover, the $M$ cliques removed during the process are all edge-disjoint, and hence our aim is to lower bound $M$.
	Thus, to prove Theorem~\ref{thm:main}, in the case $m_1\ll m_2$, it suffices to show that $M\ge m_1$ with high probability. 
	
	In order to show that the process lasts at least $m_1$ steps, we study the evolution of the number of $k$-cliques in the current graph $G_m$, which we denote by $Q(G_m)$.
	We first show that $Q(G_0)$ is concentrated around its mean $\Ex{Q(G_0)}=n^{\gamma}$.  
	We then analyse how the sequence $Q(G_m)$ evolves. 

 What is our heuristic for the evolution of $Q(G_m)$?
	At each step, a uniformly random $k$-clique $K$ is selected and removed.  However, this does not just affect this one clique.  Every clique which shares an edges with $K$ is also destroyed. 
	To `guess' how many cliques we destroy in each step, we consider the average number of cliques that contain a certain edge.
	
	In $G_m$, the average number of $k$-cliques on an edge is
	\[\dfrac{1}{e(G_m)}\sum_{e \in G_m} \sum_{K \se G_m}1_{ \{K \ni e\}}\, =\, \dfrac{\binom{k}{2}Q(G_m)}{e(G_m)}.\]
	The second sum above is over all $k$-cliques in $G_m$.
	The equality follows by interchanging the sums and noting that each $k$-clique is counted $\binom{k}{2}$ times.
	As the average edge is in $\binom{k}{2}Q(G_m)/e(G_m)$ such $k$-cliques, this allows us to `guess' that removing a $k$-clique from $G_m$ will lead to the destruction of approximately
	\[
	\binom{k}{2}\cdot \dfrac{\binom{k}{2}Q(G_m)}{e(G_m)}\, =\, \dfrac{\binom{k}{2}^2Q(G_m)}{e(G_0)-m\binom{k}{2}}
	\]
	$k$-cliques in total.
	Putting the pieces together, we `guess' that $Q(G_m)$ ought to evolve approximately like the sequence
	\begin{align*}
	\Ex{Q(G_0)}\prod_{i=0}^{m-1}\left(1\, -\, \frac{\binom{k}{2}^2}{e(G_0)-i\binom{k}{2}}\right) \, &\approx\, n^{\gamma}\exp \left (  -{\binom{k}{2}}^2 \,\,\, \sum\limits_{i=0}^{m-1} \dfrac{1}{e(G_0)-i \binom{k}{2}} \right)\\
 & \approx\, n^{\gamma}\exp \left(-\dfrac{k^4m}{2pn^2}\right) .\phantom{\Bigg|}
	\end{align*}
	We shall use so-called Differential Equation Method to prove that $Q(G_m)$ does indeed stay close to this trajectory for $m_1$ steps.  See Section~\ref{sec:over} for a complete overview of the details.

 For how long should the process last?  One might hope to prove an asymptotically best possible result using the same method.  Once the number of remaining cliques is much less than $n^2$, then the process ends quickly\footnote{in particular, when the number of cliques is less than $n^{2-\eps}$ then there are at most $n^{2-\eps}$ more steps}.  Therefore our best guess is that the process should continue until 
 \[
  n^{\gamma}\exp \left(-\dfrac{k^4m}{2pn^2}\right) \, =\, n^{2+o(1)}\, .
 \]
 This leads us to the following conjecture
.

\begin{conjecture}\label{conj:lower}
Let $p\in (0,1)$ and $C \in \mathbb{N}_{\ge 3}$ be constants and let $k=k_0-C$.  Then, with high probability, the random graph $G(n,p)$ contains at least 
		\[
		\frac{2(\gamma-2-o(1))pn^2\log{n}}{k^4}
		\]
		edge-disjoint $k$-cliques.
\end{conjecture}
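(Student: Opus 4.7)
My plan is to extend the Differential Equation Method analysis of the $k$-clique removal process $(G_m)_{m \ge 0}$ used in the proof of Theorem~\ref{thm:main}, now tracking the number of remaining $k$-cliques all the way until it drops to $n^{2+o(1)}$, rather than stopping at $m_1 = \min\{\gamma-2,1\} p n^2 \log n /(40 k^4)$. Let $q(m) := n^{\gamma} \exp\bigl(-k^4 m /(2 p n^2)\bigr)$ denote the heuristic trajectory derived in Section~\ref{sec:over}, and let $T$ be the first step at which $q(T) \le n^{2+\eta}$ for a small $\eta = \eta(n) \to 0$; solving gives $T = 2(\gamma - 2 - o(1)) p n^2 \log n / k^4$, which matches the target bound. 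Since the process terminates only when $Q(G_m) = 0$, it suffices to prove $Q(G_m) \ge 1$ for all $m \le T$, which will follow immediately from the tracking $Q(G_m) = (1 + o(1))\, q(m)$.

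The main technical task is therefore a tracking theorem of the form $Q(G_m) = (1+o(1))\, q(m)$ uniformly for $0 \le m \le T$, together with a parallel tracking of the auxiliary statistic $D_e(G_m)$, the number of $k$-cliques of $G_m$ through a given edge $e$. Conditional on $G_m$, the expected one-step decrement satisfies
\begin{equation*}
\mathbb{E}\bigl[Q(G_m) - Q(G_{m+1}) \,\big|\, G_m\bigr] \;=\; \frac{1}{Q(G_m)} \sum_{K \in \mathcal{K}_m} \bigl|\{K' \in \mathcal{K}_m : E(K') \cap E(K) \neq \emptyset\}\bigr|\, ,
\end{equation*}
where $\mathcal{K}_m$ is the family of $k$-cliques in $G_m$. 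Using that typical intersections of two distinct $k$-cliques consist of a single edge, this reduces to $(1+o(1))\binom{k}{2}^2 Q(G_m)/e(G_m)$ whenever $D_e(G_m)$ is uniformly close to its average $\binom{k}{2} Q(G_m)/e(G_m)$; this matches the derivative of $q(m)$. I would obtain the concentration of $Q$ and of the $D_e$ around their trajectories via a Freedman-type martingale inequality applied to the one-step increments minus their conditional expectations, using that the per-step absolute change in $Q$ is at most $\binom{k}{2} \max_e D_e(G_m)$.

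The hard part will be maintaining tight multiplicative control on $D_e(G_m)$ throughout the full interval $[0, T]$. The target average shrinks from $n^{\gamma - 2 + o(1)}$ at $m=0$ to $n^{o(1)}$ near $m = T$, so near the end of the process even an additive fluctuation of order one destroys purely relative concentration; the likely remedy is to carry a hybrid bound
\begin{equation*}
D_e(G_m) \;\le\; (1+o(1))\,\binom{k}{2}\,\frac{Q(G_m)}{e(G_m)}\, +\, n^{o(1)}\, ,
\end{equation*}
and verify that the additive slack contributes only lower-order terms to the drift equation for $Q$. A second obstacle is that the accumulated martingale deviation of $Q$ typically scales like $\sqrt{m/q(m)}$ (times polylogarithmic factors), which remains $o(1)$ precisely because $q(T) \approx n^2 \gg T \approx p n^2 \log n / k^4$; pushing all the way to this boundary while preserving the sharp constant $2$ in the exponent requires optimising the error bounds throughout, rather than the generous estimates behind the factor $1/40$ in Theorem~\ref{thm:main}. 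Initial concentration of $Q(G_0)$ and of the $D_e(G_0)$ should follow from standard moment estimates on clique counts in $G(n,p)$.
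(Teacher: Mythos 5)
This statement is Conjecture~\ref{conj:lower}: the paper does not prove it, and in fact explicitly warns (end of Section~\ref{sec:upper-bound-discussion}) that the analysis of the clique removal process ``would seem to be extremely difficult once most edges are no longer in any $K_k$.'' Your proposal is essentially the heuristic already given in Section~1.1 (the trajectory $n^{\gamma}\exp(-k^4m/2pn^2)$ and the stopping point $q(T)=n^{2+o(1)}$) together with an outline of the differential-equation machinery that the paper uses to prove the much weaker Theorem~\ref{thm:main}, and the places where you write ``the hard part will be\ldots'', ``the likely remedy is\ldots'' and ``requires optimising the error bounds'' are precisely the obstructions that make this a conjecture rather than a theorem. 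So this is not a proof; the gaps are concrete and substantial.

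To name them. First, the one-step change of $Y_e(G_m)$ (your $D_e$) is controlled only by $\max_{|S|=3}Y_S$, which is $n^{\max\{0,\gamma-3\}+o(1)}$ and, in the paper's argument, does not decrease along the process (only the monotone bound $Y_S(G_m)\le Y_S(G_0)$ is available). Any Freedman/Azuma bound for $Y_e$ therefore fails as soon as the allowed window $g_Y(m)\ty(m)$ falls below this scale; for $\gamma>3$ this happens after $\ty$ has dropped by only a factor $n^{1-o(1)}$, which is exactly the source of the $\min\{\gamma-2,1\}$ in Theorem~\ref{thm:main}. Reaching $\ty=n^{o(1)}$ would require tracking the decay of the $Y_S$ for $|S|=3$, whose increments involve $|S|=4$, and so on --- a hierarchy your proposal does not mention. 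Second, even for $\gamma\le 3$, the quadratic variation of $Y_e$ accumulated over $T=\Theta(pn^2\log n/k^4)$ steps is of order $n^{\gamma-2+o(1)}$ (dominated by the early steps), so the typical fluctuation of $Y_e$ is at least $n^{(\gamma-2)/2+o(1)}$; relative concentration of $Y_e$ around a mean that has decayed to $n^{o(1)}$ is simply false, and indeed when $\widebar{Y}=O(1)$ the integer-valued $Y_e$ have constant-order relative fluctuations, so the identity $\frac1Q\sum_eY_e^2=(1+o(1))\binom{k}{2}^2Q/e(G_m)$ on which your drift computation rests breaks down. Your proposed hybrid bound $D_e\le(1+o(1))\widebar{Y}+n^{o(1)}$ is the right shape of statement to aim for, but it is itself an unproved uniform-in-$m$ claim (the only easy uniform bound is $Y_e(G_m)\le Y_e(G_0)=n^{\gamma-2+o(1)}$, useless late in the process), and controlling the drift of $Q$ to the precision needed to preserve the constant $2$ additionally requires bounding the empirical variance $\sum_e\eta_e^2$ of the $Y_e$ --- yet another quantity to be tracked. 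Until these points are addressed with actual arguments, the proposal does not establish the conjecture.
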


We remark that even the most optimistic application of our upper bound result, Theorem~\ref{thm:upper}, produces an upper bound which is twice as large as this value.  We revisit this discussion after the statement of Theorem~\ref{thm:upper}.

	\subsection{A discussion about upper bounds}\label{sec:upper-bound-discussion}
	
	For positive integers $n$, $k$ and $t$, let $\NN(n,k,t)$ be the number of ways of selecting a set of $t$ edge-disjoint $k$-cliques in $K_n$.
	As in~\cite{AK},
	let $\zeta(n,k,t)$ be the probability that a sequence of $t$ cliques of size $k$ drawn uniformly and independently from $K_n$ are edge-disjoint.
	Note that $\NN$ and $\zeta$ are related by the equation
	\[\NN(n,k,t)\, =\, \dfrac{1}{t!}\binom{n}{k}^t \zeta(n,k,t).\]
	Acan and Kahn~\cite{AK} proved that there is a fixed $\alpha>0$ such that the following holds.
	If $1 \ll k \ll \sqrt{n}$ and $t = Dn^2/k^3$, then 
	\[\zeta(n,k,t) \le \exp\big(- \alpha (\log D)tk\big).\]
	We emphasize that in their result $D$ is allowed to depend on $n$ and $p$.
	Note that the expected number of sets of $t$ edge-disjoint $k$-cliques in $G(n,p)$ is $\NN p^{t\binom{k}{2}} = \zeta n^{\gamma t}/t!$, which tends to zero if $k \sim 2\log_{1/p}n$ and $D = \exp\big(2\gamma \alpha^{-1}\log\big(\frac{1}{p}\big)\big)$.
	This shows that if $p\in (0,1)$ and $C>2$ are constants and $k=k_0-C$, then with high probability one can find at most $2^{O_p(\gamma)}\cdot n^2/k^3$ edge-disjoint $k$-cliques in $G(n,p)$.

	One way to improve the upper bound on the size of the maximum $k$-clique packing in $G(n,p)$, is to obtain better upper bounds on $\zeta$.
	Let $(K^1,K^2,\ldots,K^t)$ be a sequence of $k$-cliques drawn uniformly and independently from $K_n$.
	If $1 \ll k \ll \sqrt{n}$, then the probability that $|V(K^i)\cap V(K^j)|\ge 2$ is asymptotically $k^4/(2n^2)$, for $i\neq j$.
	As Acan and Kahn suggested in~\cite{AK},
	if the events $\{|V(K^i)\cap V(K^j)|\ge 2\}$, for $1\le i<j\le t$, were close to be mutually independent, then we would have
	\[
	\zeta(n,k,t)\, \le \, \left(1-\dfrac{k^4}{2n^2}\right)^{\binom{t}{2}}\, = \, \exp\left(\frac{-(1-o(1))t^2k^4}{4n^2}\right)\, .
	\]
	In fact, Acan and Kahn showed that this holds in the case where $1\ll k\ll n^{1/2}$ and $1\ll t\ll n^2/k^3$.

	We shall need to consider slightly larger values of $t$.  
	Define
	\[t_0:= \dfrac{5(\gamma-2)}{1-p} \cdot \dfrac{pn^2 \log{n}}{k^4}\]
	and define $\beta=\beta(n,k)$ to be maximal such that the inequality
	\eq{beta}
	\zeta(n,k,t)\, \le \, \exp\left(\frac{-\beta t^2k^4}{n^2}\right) 
	\eqe
	holds for all $t\le t_0$.  The result of Acan and Kahn mentioned above shows that~\eqr{beta} holds with $\beta=1/4+o(1)$ for $t\ll n^2/k^3$, and with $\beta>0$ for $t$ of order $n^2/k^3$, which is of the same order as $t_0$ in cases where $k=\Theta(\log{n})$.
	This includes all cases corresponding to near-maximal cliques in dense random graphs (with $p$ constant).
	
	Our upper bounds on the size of the maximum $k$-clique packing in $G(n,p)$ depend on $\beta$.
	Nevertheless, even in the worse case scenario in which $\beta$ is asymptotically zero, our result is already non-trivial.
	In particular, our result gives an independent proof of Acan and Kahn's result, with a linear dependence in $\gamma$ rather than exponential.
	Our results also hold under mild assumptions when $\gamma$ is not constant and $p$ goes to $0$.

	\begin{restatable}{theorem}{upper}\label{thm:upper}
		Let $\gamma_- >2$ and $\eps, p_+\in (0,1)$, and let $(p,n,k)\in \Lambda_{\gamma_-,p_+}$.
		Then, except with probability at most $\exp(-n^{1+o(1)})$, 
		the random graph $G(n,p)$ does not contain any family of 
		\[
		\dfrac{(\gamma -2)(4+\eps)}{1+(4\beta-1)p} \cdot \dfrac{pn^2}{k^4}\log n
		\]
		edge-disjoint $k$-cliques.
	\end{restatable}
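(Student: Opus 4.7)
My plan is to prove Theorem~\ref{thm:upper} by a first-moment calculation. Let $X_t$ denote the number of unordered collections of $t$ pairwise edge-disjoint $k$-cliques contained in $G(n,p)$. Since any such collection uses exactly $t\binom{k}{2}$ distinct edges, each present independently with probability $p$, we have
\[
\Ex{X_t}\, =\, \NN(n,k,t)\cdot p^{t\binom{k}{2}}\, =\, \frac{n^{\gamma t}\,\zeta(n,k,t)}{t!}\,,
\]
where the second equality uses the identity $\binom{n}{k}p^{\binom{k}{2}}=n^{\gamma}$. By Markov's inequality, the theorem will follow once we show that for $t\,:=\,\frac{(\gamma-2)(4+\eps)}{1+(4\beta-1)p}\cdot \frac{pn^2\log n}{k^4}$ one has $\Ex{X_t}\le \exp(-n^{1+o(1)})$.

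Taking logarithms and applying the defining inequality $\zeta(n,k,t)\le \exp(-\beta t^2 k^4/n^2)$ (valid because the chosen $t$ satisfies $t\le t_0$ by direct comparison with the formula defining $t_0$), together with Stirling's estimate $\log t!\ge t\log t-t$, gives
\[
\log\Ex{X_t}\,\le\,\gamma t\log n\,-\,\frac{\beta t^2 k^4}{n^2}\,-\,t\log t\,+\,t\,+\,O(\log t).
\]
Since $p=n^{o(1)}$ and $k=\Theta(\log n)$ (as $(p,n,k)\in\Lambda_{\gamma_-,p_+}$), we have $\log t=(2+o(1))\log n$, and each of $\gamma t\log n$, $\beta t^2 k^4/n^2$ and $t\log t$ scales as $\frac{pn^2\log^2 n}{k^4}=n^{2+o(1)}$. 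The task then reduces to verifying that, at our chosen $t$, the leading-order combination of these terms is strictly negative; this will yield $\log\Ex{X_t}\le -n^{2+o(1)}$, comfortably beyond the desired probability bound.

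The main delicate step is the algebraic bookkeeping needed to produce the precise coefficient $(\gamma-2)(4+\eps)/(1+(4\beta-1)p)$ stated in the theorem. Writing $t=c\cdot pn^2\log n/k^4$, the three main contributions reduce to an expression of the form $\frac{pn^2\log^2 n}{k^4}\cdot \bigl[c(\gamma-2) - \Gamma c^2 + o(1)\bigr]$, where $\Gamma$ is assembled from the $\zeta$-bound (contributing $\beta p$) together with further lower-order corrections arising from $\log t$ and Stirling's formula. Identifying $\Gamma=(1+(4\beta-1)p)/4$ exactly, so that the quadratic in $c$ becomes negative precisely at the claimed threshold, is where the real work lies. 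Once this algebraic manipulation is carried out, Markov's inequality concludes the proof.
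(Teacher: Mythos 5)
There is a genuine gap, and it is precisely the one idea that makes the paper's constant achievable. Your first moment is computed in $G(n,p)$, where the edges are independent, so the probability that a fixed family $\K$ of $t$ edge-disjoint $k$-cliques appears is exactly $p^{t\binom{k}{2}}$ and the only quadratic-in-$t$ term in $\log\Ex{X_t}$ is the one supplied by the $\zeta$-bound, namely $-\beta t^2k^4/n^2$. Writing $t=c\cdot pn^2\log n/k^4$, your expansion is
\begin{equation*}
\log\Ex{X_t}\,\le\,\frac{pn^2\log^2 n}{k^4}\Bigl[\,c\bigl(\gamma-2+o(1)\bigr)\,-\,\beta p\,c^2\,\Bigr],
\end{equation*}
because $\gamma t\log n$, $-t\log t$ and $+t$ are all \emph{linear} in $t$ (up to the factor $\log t=(2+o(1))\log n$, which is where the $\gamma-2$ comes from); they cannot contribute to the coefficient of $c^2$. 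So your $\Gamma$ equals $\beta p$, not $(1+(4\beta-1)p)/4=\beta p+(1-p)/4$, and the claim that the Stirling and $\log t$ corrections assemble into the missing $(1-p)/4$ is false. The consequence is not cosmetic: your argument only forces $\Ex{X_t}\to 0$ for $c>(\gamma-2)/(\beta p)$, which is strictly weaker than the stated threshold and is vacuous in the case $\beta=0$ that the theorem explicitly covers (the paper stresses that the result is non-trivial even when $\beta$ is asymptotically zero).

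The paper recovers the $(1-p)/4$ term by running the first-moment calculation in $G(n,m_+)$ with $m_+=\lfloor p\binom{n}{2}+(p\binom{n}{2})^{3/4}\rfloor$ and transferring back to $G(n,p)$ via monotonicity plus a Chernoff bound (this is also the source of the $\exp(-n^{1+o(1)})$ in the probability bound). In $G(n,m_+)$ the edges are negatively correlated, and the probability that $t\binom{k}{2}$ prescribed edges all appear is $(m_+)_{t\binom{k}{2}}/(N)_{t\binom{k}{2}}\le p^{t\binom{k}{2}}\exp\bigl(-(1-p)t^2k^4/(4pn^2)+o(t^2k^4/(pn^2))\bigr)$, which is exactly the extra quadratic term you are missing. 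To repair your proof you need to add this change of model (or some equivalent device that extracts the negative correlation); the rest of your outline, including the check that $t\le t_0$ so that the $\zeta$-bound applies, is consistent with the paper's argument.
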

	
	Our proof relies on showing that the expected number of such collections of edge-disjoint $k$-cliques in $G(n,m)$ tends to zero, where $m = \lceil p\binom{n}{2}+(pn^2)^{3/4}\rceil$.  The theorem then follows by relating the $G(n,m)$ and $G(n,p)$ models in the standard way.

	Observe that Theorem~\ref{thm:upper} immediately implies Theorem~\ref{thm:intro:upper:weak}, as $\eps < 1$ and we have $\beta \ge 0$ and $C\ge \gamma+o(1)$, as we saw above in~\eqr{Cgamma}.
    As with Theorem~\ref{thm:main}, Theorem~\ref{thm:upper} does not include the case where $\gamma$ tends to 2.
	
	What is the best we could hope to achieve using this upper bound?  If~\eqr{beta} holds with $\beta=1/4+o(1)$, then Theorem~\ref{thm:upper} implies that the largest family of edge-disjoint $k$-cliques has size at most $(4+o(1))(\gamma-2)pn^2\log{n}/k^4$ with high probability.  Recall that, even with our most optimistic conjecture (Conjecture~\ref{conj:lower}), our approach to the lower bound is only likely to produce a family of $(2+o(1))(\gamma-2)pn^2\log{n}/k^4$ edge-disjoint $k$-cliques.  
	These possible bounds differ by a factor of $2$, and we do not know whether either of these bounds (if true) would be best possible.  It is quite possible that our analysis of the clique removal process breaks down before the process ends, and it might even be possible that the process lasts $(4+o(1))(\gamma-2)pn^2\log{n}/k^4$ steps, but analysis of the process would seem to be extremely difficult once most edges are no longer in any $K_k$.

	\subsection{Historical background on the Differential Equation Method}
	
	Our approach relies on tracking various random variables related to the evolution of a random graph process.  This method is known as the Differential Equation Method, as the trajectories are generally given by solutions to differential equations.  Although, as we `guess' the trajectories, we do not actually need to work with differential equations at all.
	
	The Differential Equation Method was popularised in the combinatorial community by Wormald~\cite{Worm95,Worm99}.  In particular, Wormald~\cite{Worm99} gives a general theorem which may be used in many applications of the method.  The roots of the method may be traced back further, as can be seen in the references given in~\cite{Worm99}.  It is especially worthwhile highlighting Kurtz~\cite{K} and Karp and Sipser~\cite{KS}.
	
	Since then the method has seen many applications.  It has been used to understand the $H$-free and triangle-free random graph processes, see~\cite{Boh,BK1,BK2,FGM} (and the references therein).
	The line of research most directly related to our results is the study of the triangle removal process.  
	The triangle removal process is similar to the process we study except with $k=3$, as a triangle is removed each time.  
	That said, there are significant differences in the analysis of the processes due to the fact that our cliques have unbounded size $k=\Theta(\log{n})$ rather than $k=3$. 
	Moreover, while our main concern is how long the process lasts asymptotically, the main question considered about the triangle-free process is the question of how many edges remain when the process ends (becomes triangle-free).  Bollobás and Erd\H os conjectured in 1990 that order $n^{3/2}$ edges remain with high probability.  Progress began with the bound $o(n^2)$ due to Spencer~\cite{Str} and independently by Rödl and Thoma~\cite{RTtr}.  Grable~\cite{Gtr} then improved this upper bound to $n^{11/6+o(1)}$ and outlined how the argument should give $n^{7/4+o(1)}$.  Then, in 2015, Bohman, Frieze and Lubetzky~\cite{BFL} made a major breakthrough, improving the upper bound to $n^{3/2+o(1)}$.
	
	Let us also mention two survey type articles on the  Differential Equation Method, due to Díaz and Mitsche~\cite{DM} and Bennett and Dudek~\cite{BD}.  Finally, we mention a recent shorter proof of Wormald's Theorem by Warnke~\cite{warnkeL}.
	
	\subsection{A brief history of packing problems}
	
	The history of clique packings dates back to the work of Pl\"ucker~\cite{Plucker} in 1853, who showed that there exists a collection of $\big(\frac{1}{3}-o(1)\big)\binom{n}{2}$ edge-disjoint triangles in $K_n$.
	This is asymptotically best possible, as each triangle ocuppies three edges.
	Wilson~\cite{Wilson} extended this result in 1975, more than one hundred years later, to cliques of arbritrary size.
	Actually, these results were much more precise.
	They showed that if $n$ is sufficiently large and certain divisibility conditions are satisfied, then $K_n$ contains a \emph{perfect} $k$-clique packing.
	That is, there exists an $k$-clique packing whose union of edges is equal to $\binom{n}{2}$.
	In 1985, Frankl and R\"odl~\cite{FR85} extended these results asymptotically by showing that for a fixed graph $H$ and $n$ sufficiently large, there exists a \emph{nearly} perfect $H$-packing in $K_n$.
	In other words, we can find $(1-o(1))\binom{n}{2}/e(H)$ edge-disjoint copies of $H$ in $K_n$.
	Much later, in a major breakthrough on the existence of designs, Keevash \cite{Keevash,Keevash2} and, independently, Glock, Kühn, Lo, and Osthus \cite{GKLO23} determined the necessary and sufficient conditions for a perfect $H$-packing in $K_n$. They showed that, for all sufficiently large $n$, the obvious divisibility constraints on $n$ are also sufficient to guarantee an $H$-packing of size exactly $\binom{n}{2}/e(H)$. A corresponding result was proved for hypergraphs as well.

	Frankl and R\"odl~\cite{FR85} pioneered the study of packings in random graphs. They showed that for $p \ge n^{\eps-2/(k+1)}$, with $\eps>0$, we have a nearly perfect $K_k$-packing in $G(n,p)$ with high probability.
	%They were also able to extend this result to sparser ranges of $p$. 
	More recently, there has been a focus on results of packing  with large (sometimes spanning) subgraphs. 
	Moreover, many of these results generalise to pseudo-random graphs.
	%and sparser values of $p$. %random graphs and pseudo-random graphs. 
	In order to aid readability, we shall state all results as with high probability and in $G(n,p)$.
	%results in $G(n,p)$, for $p\in (0,1)$ constant.  
	
	In 2005, Frieze and Krivelevich~\cite{FK05} showed that if $p \gg n^{-1}\log n$, then $G(n,p)$ contains a near perfect packing of hamiltonian cycles, with high probability.
	In 2010, the same authors extended this result to hypergraphs~\cite{FK12}.
	In 2016, the R{\"o}dl nibble method was brought by Böttcher, Hladký, Piguet and
	Taraz~\cite{BHPT16} to solve the approximate version of the tree packing conjecture in complete graphs.
	This result was further extended through a series of papers~\cite{FLM17,JKKO19,MRS16}.
	These works finally culminated in the work of  Kim, K{\"u}hn, Osthus and Tyomkyn~\cite{KKOT19} who showed that if $p$ is constant, then with high probability there exists a near perfect $H_n$-packing in $G(n,p)$ for every bounded degree graph $H_n$ on $n$ vertices.
	
	The first result to consider packings of graphs with unbounded degrees is due to Ferber and Samotij~\cite{FS19}.
	They showed that if $p \ge n^{-1}(\log n)^{36}$ and $T_n$ is a tree on $n$ vertices with degree bounded by $(np)^{1/6}$, then with high probability $G(n,p)$ contains a near perfect $T_n$-packing.
	Allen, B{\"o}ttcher, Hladk{\`y} and Piguet~\cite{ABHP19} showed that if $H$ is a $D$-degenerate graph with degree bounded by $O(n/\log n)$, then there exists a near perfect $H$-packing in $K_n$.
	By building on~\cite{ABHP19}, Allen, Böttcher, Clemens and Taraz~\cite{ABCT22} showed that if $p$ is constant, then with high probability $G(n,p)$ contains a near perfect $H_n$-packing provided that $H_n$ has bounded degenerancy, is not close to being spanning and has maximum degree $o(n/\log n)$.
	In 2021, Allen, 
	B{\"o}ttcher, Clemens, Hladk{\`y}, Piguet
	and Taraz~\cite{ABCHPT21} extended the last result by removing the bounded degenerancy assumption in the case of trees.

	In 2020, Keevash and Staden~\cite{KS20} and, independently,  Montgomery, Pokrovskiy and Sudakov~\cite{MPS21}, proved the longstanding Ringel’s tree packing conjecture.
	The method of Keevash and Staden was also applied for packing trees in $G(n,p)$ with no condition on the maximum degree of the tree, but only on the number of edges.
	More recently, Decourt, Kelly and Postle~\cite{DKP24} used the absorption method to show, among other results, that if $q \in \mathbb{N}_{\ge 4}$ and $p \ge n^{-\frac{1}{q+0.5}+\beta}$ for some $\beta >0$, then with high probability $G(n,p)$ contains packing of $q$-cliques containing all but $(q-2)n+O(1)$ edges. 
	This refines the result of Frankl and R\"odl in~\cite{FR85}, where the bound on the number of edges which are not in a maximum packing is far from linear.

	\subsection*{Plan of the article}
	
	In Section~\ref{sec:over} we give an overview of the proof, including discussion of the random graph process and the variables we shall track, $Q(G_m)$ and $Y_e(G_m)$.  We also introduce some useful inequalities.  In Section~\ref{sec:I} we consider the behaviour of $Q(G_0)$, $Y_e(G_0)$ and other quantities in the initial random graph $G_0\sim G(n,p)$.  We then prove the main concentration results for the evolution of the sequences $Q(G_m)$ and $Y_e(G_m)$ in Sections~\ref{sec:Q} and~\ref{sec:Y}, respectively.  Finally, in Section~\ref{sec:upper} we prove the upper bound result, Theorem~\ref{thm:upper}.

	\section{Overview of the proof}\label{sec:over}
	As we mentioned in the introduction, we prove Theorem~\ref{thm:main} by considering the following random graph process.  We start with an initial graph $G_0$, where $G_0\sim G(n,p)$, and then, at each step of the process, we remove a uniformly random $k$-clique from the current graph.  That is, we obtain $G_{m+1}$ from $G_m$ by selecting a uniformly random $k$-clique $K\subseteq G_m$ and setting $E(G_{m+1}):=E(G_m)\setminus E(K)$.
	
	We may immediately observe that the number of edges in $G_m$ is precisely $e(G_m)=e(G_0)-m\binom{k}{2}$, as precisely $\binom{k}{2}$ edges are removed in each step.
	It will also be possible (with more work!) to show that other parameters of the graph $G_m$ are also well behaved, in the sense that they remain close to certain pre-defined trajectories.
	For example, $Q(G_m)$, which is the number of $k$-cliques remaining in the graph $G_m$, will remain close to the trajectory
	\begin{align}\label{def:tq}
		\tq(m)\, :=\, \widetilde{Q}(0)\prod_{i=0}^{m-1}\left(1\, -\, \frac{\binom{k}{2}^2}{e(G_0)-i\binom{k}{2}}\right) \, \approx\, \widetilde{Q}(0)\exp \left (  -{\binom{k}{2}}^2 \sum\limits_{i=0}^{m-1} \dfrac{1}{e(G_0)-i \binom{k}{2}} \right) 
	\end{align}
	where $\widetilde{Q}(0)\coloneqq \mathbb{E}[Q(G_0)]=E(n,k)=\binom{n}{k}p^{\binom{k}{2}}=n^{\gamma}$.    
	
	Let us be more specific about how close $Q(G_m)$ should remain to $\tq(m)$.  Define
	\begin{align}\label{eq:delta}
		\delta:=\dfrac{\min\{\gamma-2,1\}}{10}.
	\end{align}
	The allowed relative error for $Q(G_m)$, relative to $\tq(m)$, will be 
	\begin{align}\label{eq:gQerror}
		g_Q(m)\,: =\, 2n^{-\delta}\prod_{i=0}^{m-1}\left(1\, +\, \frac{\binom{k}{2}^2}{e(G_0)-i\binom{k}{2}}\right)\, \approx\, 2n^{-\delta} \exp \left (  \binom{k}{2}^2 \sum\limits_{i=0}^{m-1} \dfrac{1}{e(G_0)-i \binom{k}{2}} \right ),
	\end{align}
	where $g_Q(0)=2n^{-\delta}$.
	This choice of error function is related to the techniques we use in the analysis of the random process, which are based on supermartingales and related inequalities.
	At the end of Section~\ref{sub:the-method} we briefly explain the motivation behind our choice.
	%Note that $\tq(0)=n^{\gamma+o(1)}$ and so $\tq(\min\{\gamma-2,1\}pn^2\log{n}/40 k^4)\ge n^2$.
	
	We are now nearly ready to state our result about the evolution of $Q(G_m)$.  We use the notation $x = a\pm b$ for $x\in [a-b,a+b]$.  Let us set 
	\begin{align}\label{eq:defn-m-star}
		m_{*}:=\min \left\{\left\lfloor  \dfrac{\delta pn^2\log{n}}{4k^4}\right\rfloor, \dfrac{pn^2}{2k^2}\right\} .
	\end{align}  
	
	\begin{theorem}\label{thm:Q}  Let $\gamma_- >2$ and $p_+\in (0,1)$, and let $(p,n,k)\in \Lambda_{\gamma_-,p_+}$. 
		Then, with high probability, in the $k$-clique removal process $(G_m)_{m\ge 0}$ with random initial graph $G_0\sim G(n,p)$ we have
		\[
		Q(G_m)\, = \, \tq(m)\, \left(1\pm g_Q(m)\right)
		\]
		for all $m < m_*$.
	\end{theorem}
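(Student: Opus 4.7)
The plan is to analyse $Q(G_m)$ via the standard two-sided supermartingale set-up of the Differential Equation Method. Introduce the stopping time $\tau$ equal to the first $m$ at which either $Q(G_m)$ leaves the corridor $\tq(m)(1\pm g_Q(m))$, or the analogous $Y_e$-tracking event (which, for every edge $e$, keeps $Y_e(G_m)$ close to its mean $\binom{k}{2}Q(G_m)/e(G_m)$, and will be proved in Section~\ref{sec:Y}) fails.  The goal is to show $\mathbb{P}(\tau<m_*)=o(1)$; combined with the initial bound $Q(G_0)=(1\pm n^{-\delta})\tq(0)$ supplied by Section~\ref{sec:I}, well inside the $g_Q(0)=2n^{-\delta}$ tolerance, this will give the theorem.

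The first technical step is the one-step drift computation.  Conditionally on $G_m$ and on $\{m<\tau\}$, when a uniformly random $k$-clique $K\subseteq G_m$ is removed, the number of cliques destroyed equals $\sum_{e\in E(K)}Y_e(G_m)$ minus an overcount from cliques sharing at least two edges with $K$.  Averaging over $K$ and using the identity $\sum_{e\in E(G_m)}Y_e(G_m)=\binom{k}{2}Q(G_m)$, together with the $Y_e$-tracking hypothesis, one obtains
\[
\mathbb{E}\bigl[Q(G_{m+1})-Q(G_m)\,\big|\,G_m\bigr]\, =\, -\,\dfrac{\binom{k}{2}^2Q(G_m)}{e(G_m)}\, +\, \Xi(G_m),
\]
where $\Xi(G_m)$ is a second-order error bounded by $n^{-\Omega(1)}\binom{k}{2}^2Q(G_m)/e(G_m)$, controlled via an auxiliary bound on the number of clique pairs sharing $\ge 2$ edges.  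The multiplicative factor $1-\binom{k}{2}^2/e(G_m)$ matches $\tq(m+1)/\tq(m)$ exactly, and the factor $1+\binom{k}{2}^2/e(G_m)$ in the recurrence for $g_Q$ is precisely tuned to absorb the contribution of $\Xi$ to the relative error.  A direct substitution then shows that on $\{m<\tau\}$ the two processes
\[
Q^{\pm}(m)\, :=\, \pm\bigl(Q(G_m)-\tq(m)\bigr)\, -\, \tq(m)g_Q(m)
\]
satisfy $\mathbb{E}[Q^{\pm}(m+1)-Q^{\pm}(m)\,|\,G_m]\le 0$ and are thus supermartingales.

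For the concentration step, the one-step increment $|Q(G_{m+1})-Q(G_m)|$ is bounded by $\binom{k}{2}\max_e Y_e(G_m)$, which on $\{m<\tau\}$ is at most $n^{o(1)}\binom{k}{2}^2\tq(m)/e(G_m)$ by the $Y_e$-tracking hypothesis.  Apply Freedman's inequality to $Q^{+}$ and $Q^{-}$ stopped at $\tau$, with predictable quadratic variation summed along the process, and union-bound over $m<m_*$.  This yields that the probability of $Q^{\pm}(m)>0$ for some $m<m_*\wedge\tau$ is $o(1)$; combined with the bound from Section~\ref{sec:Y} on the failure probability of the $Y_e$-event before $m_*$, this closes the loop and gives $\mathbb{P}(\tau<m_*)=o(1)$.

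The main obstacle is the circular dependence between the analyses of $Q(G_m)$ and of the $Y_e(G_m)$: the drift computation for $Q$ requires good control on every $Y_e$, while the supermartingale argument for each $Y_e$ requires $Q$ to be on-trajectory and also requires controls on cliques sharing multiple edges through a fixed edge with a random $K$.  The resolution is to bundle the failure of \emph{any} tracked quantity into the single stopping time $\tau$, so that each individual supermartingale argument only needs to be valid on $\{m<\tau\}$; the separate concentration inequalities then contribute their (small) failure probabilities to one overall union bound, breaking the apparent circularity.
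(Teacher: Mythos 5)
Your overall architecture matches the paper's: a single stopping time bundling the failure of $Q$-tracking and $Y_e$-tracking, a one-step drift computation feeding a supermartingale argument, and martingale concentration plus a union bound. However, there is a genuine gap at the central step. The processes $Q^{\pm}(m)=\pm(Q(G_m)-\tq(m))-\tq(m)g_Q(m)$ are \emph{not} supermartingales on all of $\{m<\tau\}$. Writing $h(m)=g_Q(m)\tq(m)$ and $c_m=\binom{k}{2}^2/e(G_m)$, the exact recursions for $\tq$ and $g_Q$ give $h(m+1)-h(m)=-c_m^2h(m)$, so the deterministic barrier is essentially flat, and
\[
\Ex{Q^{+}(m+1)-Q^{+}(m)\,\big|\,G_m}\;=\;-c_m\bigl(Q(G_m)-\tq(m)\bigr)\;+\;c_m^2h(m)\;\pm\;\Xi(G_m)\,.
\]
The only restoring term is $-c_m(Q(G_m)-\tq(m))$, and when $Q(G_m)$ sits at or below $\tq(m)$ (which is allowed on $\{m<\tau\}$) this term is zero or positive, so the conditional drift can be positive and the supermartingale property fails. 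Applying Freedman's inequality to $Q^{+}$ run from time $0$ therefore proves nothing. The standard repair, which the paper implements via the times $\psi_i$ and the events $A_i$ in the proof of Proposition~\ref{prop:Q}, is a critical-interval (last-crossing) decomposition: if the upper barrier is breached at some time, let $i$ be the last time before the breach at which $Q(G_{i-1})\le(1+g_Q(i-1)/2)\tq(i-1)$; on $[i,\psi_i]$ one has $Q(G_m)-\tq(m)\ge g_Q(m)\tq(m)/2$, the drift is at most $-c_mh(m)/2+\Xi\le 0$, the process started at $X_i\le -h(i)/4$ must climb by $h(i)/4$, and one union-bounds over the at most $m_*$ choices of $i$. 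This idea is absent from your proposal and cannot be dispensed with.

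A secondary imprecision: bounding the error $\Xi(G_m)$ by $n^{-\Omega(1)}\binom{k}{2}^2Q(G_m)/e(G_m)$ is not the right target, because the restoring drift available even on the critical interval is only $c_m g_Q(m)\tq(m)/2$, and $g_Q(m)$ is itself as small as $2n^{-\delta}$. What must be shown is $\Xi\ll g_Q(m)\cdot c_m\tq(m)$, and this is exactly where the choice $\delta=\min\{\gamma-2,1\}/10$ enters: the overcount term is of order $k^3n^{\delta}\max\{1,\Ex{Y_S(G_0)}\}=n^{\delta+\max\{0,\gamma-3\}+o(1)}$ (using the a priori bound on $Y_S(G_0)$ for $|S|=3$ from Proposition~\ref{prop:I}(4), via monotonicity $Y_S(G_m)\le Y_S(G_0)$), while $g_Q(m)\tq(m)/e(G_m)\ge n^{\gamma-2-\delta+o(1)}$; the inequality $2\delta<\min\{\gamma-2,1\}$ is what makes the comparison close. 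Your sketch gestures at the right mechanism but does not verify this quantitative margin, which is the reason $\delta$ has the value it does.
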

	
	Our main theorem, Theorem~\ref{thm:main}, follows immediately from Theorem~\ref{thm:Q}.  
	
	\begin{proof}[Proof of Theorem~\ref{thm:main}, assuming Theorem~\ref{thm:Q}]
		The $k$-clique removal process $(G_m)_{m\ge 0}$ continues to run as long as $Q(G_m)>0$.
		By Theorem~\ref{thm:Q}, it suffices to show that $ g_{Q}(m) < 1$ and $\tq(m)>0$ for all $m < m_{*}$.
		As $g_Q$ is increasing and $\tq$ is decreasing, it suffices to show this for $m=m_{*}$.
		As $e(G_0)=(1+o(1))pn^2/2$ with high probability, by Chernoff's inequality\footnote{Chernoff's inequality states that if $Y$ is a binomial random variable and $t\ge0$, then $\Pr(|Y-\Ex{Y}|\ge t)\le 2e^{-t^2/(2\Ex{Y}+t)}$.}, and $m_{*} \le pn^2/(2k^2)$, we have $e(G_{m_{*}})\ge (1+o(1))pn^2/4$, and hence $g_Q(m_{*})$ is at most
		\[
		g_Q(m_{*})\le 2n^{-\delta} \exp \left (  \frac{(1+o(1))k^4 m_{*}}{pn^2}\right).
		\]
		As $m_{*} \le \delta pn^2\log n/(4k^4)$, it follows that $g_Q(m_{*})\, \le\, 2n^{-3\delta/4}\, \ll\, 1$
		with high probability.  As the bracketed term in the definition of $\tq(m_ *)$ is positive (as $e(G_{m_{*}})\ge  (1+o(1))pn^2/4>k^4$), we also have $\tq(m_*)>0$.
	\end{proof}
	
	Now that we have shown that Theorem~\ref{thm:Q} implies the main theorem, we will discuss how to analyse the evolution of the process and prove Theorem~\ref{thm:Q}.  
	One part of that challenge is to understand the nature of the one-step changes in $Q(G_m)$.  
	The one-step change in $Q(G_m)$ corresponds to the number of $k$-cliques which are ``destroyed'' at each step. Note that one $k$-clique $K$ is removed as part of this process, but naturally this ``destroys'' all other $k$-cliques $K'$ which share an edge with $K$.  This leads us to consider for each edge $e\in E(G_m)$ the number of $k$-cliques which contain this edge.  
	
	For every edge $e\in E(K_n)$ we may define $Y_e(G_m)$ to be the number of $k$-cliques in $G_m \cup \{e\}$ which contain $e$.
	For the reasons discussed above, it will also be essential to track the sequences $Y_e(G_m)$, simultaneously for all $e\in E(K_n)$.  Our aim will be to show that $Y_e(G_m)$ remains close to the function
	\begin{align}\label{def:ty}
		\ty(m):=\binom{k}{2} \dfrac{\tq(m)}{e(G_m)}.
	\end{align}
	We may also extend this definition to sets of three vertices $S\subseteq V(G_0)$.  Given $S$ a set of three vertices, let $Y_S(G_m)$ be the number of $k$-cliques in $G_m \cup \binom{S}{2}$ containing $S$, where $\binom{S}{2}:=\{A\se S: |A|=2\}$.
	In fact, we shall not require a close control of the evolution of $Y_S(G_m)$. It will be sufficient for us to bound $Y_{S}(G_0)$ and use monotonicity $Y_S(G_m)\le Y_S(G_0)$ for all $m\ge 0$.

	Let us begin with the properties we require of the initial graph $G_0\sim G(n,p)$.  

 %Our next proposition states that all the initial random variables behave well. 

	\begin{prop}\label{prop:refined}\label{prop:I} Let $\gamma_- >2$ and $p_+\in (0,1)$, and let $(p,n,k)\in \Lambda_{\gamma_-,p_+}$.
		With high probability, all the following items are satisfied: 
		\begin{enumerate}
			\item [$(1)$] $e(G_0)\, = \, pn^2 \pm n^{3/2}$;
			\vspace*{0.2cm}
			\item [$(2)$] $Q(G_0)\, = \, (1\pm n^{-\delta})\tq(0)$;
			\vspace*{0.2cm}
			\item [$(3)$] $Y_e(G_0)\, = \, (1\pm n^{-\delta})\ty(0)$ for all $e\in E(K_n)$;
			\vspace*{0.2cm}
			\item [$(4)$]\label{prop:YS} $Y_S(G_0)\, \le\, n^{\delta}\max\{1,\Ex{Y_{S}(G_0)}\}$ for all sets $S \se V(G_0)$ with $|S| =3$.
		\end{enumerate}
	\end{prop}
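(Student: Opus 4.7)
The plan is to prove each of the four items by standard concentration techniques for subgraph counts in $G(n,p)$, combined with a union bound for the quantifiers in items (3) and (4). Item (1) is immediate from Chernoff's inequality applied to $e(G_0)\sim \mathrm{Bin}(\binom{n}{2},p)$ with deviation $n^{3/2}$, yielding failure probability $\exp(-n^{1-o(1)})$. Item (2) is a textbook second-moment computation: partitioning pairs of $k$-subsets by intersection size $j$ gives
\[
\frac{\mathrm{Var}(Q(G_0))}{\Ex{Q(G_0)}^2}\, \le\, \sum_{j=2}^{k}\frac{\binom{k}{j}\binom{n-k}{k-j}}{\binom{n}{k}}\,p^{-\binom{j}{2}}\, =\, O\!\left(\frac{k^4}{pn^2}\right)\, =\, n^{-2+o(1)},
\]
dominated by the $j=2$ term; since $\delta<1$, Chebyshev's inequality delivers the relative error $n^{-\delta}$.

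For item (3), I would first use item (1) together with the double-counting identity $\binom{k}{2}\tq(0)=\Ex{Y_e(G_0)}\cdot p\binom{n}{2}$ to deduce $\ty(0)=(1+O(n^{-1/2+o(1)}))\Ex{Y_e(G_0)}$, so since $\delta<1/2$ it suffices to show that $Y_e(G_0)=(1\pm n^{-\delta}/2)\Ex{Y_e(G_0)}$ for every $e\in E(K_n)$ simultaneously. For item (4), I would split on whether $\Ex{Y_S(G_0)}\ge 1$: in the large case I need an upper-tail bound $\pr{Y_S(G_0)\ge n^{\eps}\Ex{Y_S(G_0)}}\le n^{-\omega(1)}$, and in the small case the simpler bound $\pr{Y_S(G_0)\ge n^{\eps}}\le n^{-\omega(1)}$, which follows from Markov's inequality applied to a moment of $Y_S(G_0)$ of order $O(1/\eps)$.

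The main obstacle is obtaining tail bounds strong enough for items (3) and (4) to survive the union bounds over the $\binom{n}{2}$ edges and $\binom{n}{3}$ triples. Chebyshev's inequality, which sufficed for item (2), gives failure probability only $n^{-2+2\delta+o(1)}$ per edge, which is not summable. To close the gap I would invoke exponential-type concentration for low-degree polynomials of Bernoullis --- specifically, Janson's inequality for the lower tail of $Y_e(G_0)$, and Kim--Vu's polynomial concentration (or an upper-tail Janson inequality) for the upper tails needed in items (3) and (4). In each case the failure probability is $\exp(-n^{\Omega(1)})$, which comfortably survives both union bounds.
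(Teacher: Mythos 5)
Items (1) and (2) of your plan are fine, and for item (2) your Chebyshev route is actually more elementary than the paper's (the paper deduces (2) from (3) via $Q(G_0)=\binom{k}{2}^{-1}\sum_e Y_e(G_0)$); your variance computation and the reduction of $\ty(0)$ to $\Ex{Y_e(G_0)}$ are both correct. The lower tail of item (3) via Janson's inequality is exactly what the paper does (one still has to estimate $\bar\Delta$, which requires an endpoint-domination argument over intersection sizes, but that is routine). The genuine gap is in the upper tails for items (3) and (4). Kim--Vu polynomial concentration is not applicable here: $Y_e(G_0)$ and $Y_S(G_0)$ are polynomials of degree $\binom{k}{2}-1=\Theta\big((\log n)^2\big)$ in the edge indicators, and the Kim--Vu bound carries a factor of order $8^d(d!)^{1/2}\lambda^d$ in the deviation, which for $d=\Theta((\log n)^2)$ is $n^{\omega(1)}$ and swamps the target relative error $n^{-\delta}$. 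Likewise there is no off-the-shelf ``upper-tail Janson inequality'' --- this is precisely the \emph{infamous upper tail}, and your claimed failure probability $\exp(-n^{\Omega(1)})$ for the upper deviations is not achievable by these methods (nor is it what the paper obtains: the deletion method yields only $n^{-3}$ per edge, which happens to suffice for the union bound).

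What the paper actually does, and what your proposal is missing, is a two-stage argument. First it proves the rough bound $Y_S(G_0)\le n^{\eps}\max\{1,\Ex{Y_S(G_0)}\}$ for all $|S|\le 3$ by Markov applied to the $\ell$-th moment with $\ell=\lceil 8\eps^{-1}\rceil$; but bounding $\Ex{Y_S(G_0)^{\ell}}$ by $k^{O(1)}\max\{1,\Ex{Y_S(G_0)}^{\ell}\}$ is itself the technical heart of the argument (Lemma~\ref{lem:moments} and the appendix), requiring a recursive classification of $\ell$-tuples of cliques by whether each successive clique has small, middling, or large intersection with the union of its predecessors. You assert this moment bound as if it were standard; it is not, because $\ell$-tuples of heavily overlapping $k$-cliques through $S$ contribute terms that are not controlled by $\Ex{Y_S}^{\ell}$ without this clustering analysis. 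Second, the refined upper tail of $Y_e(G_0)$ in item (3) is obtained by the deletion method of Janson--Ruci\'nski, whose hypothesis requires exactly the a priori control $X^*_e\le\binom{k}{3}\max_{|S|=3}Y_S(G_0)\le \Ex{Y_e(G_0)}/n^{5\delta}$ coming from the rough bound on triples. So item (4) is not merely a parallel task to item (3): it is a prerequisite for it, and neither is obtainable by the exponential concentration inequalities you invoke.
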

	%
	%\begin{tabular}{|l|l|l|}
	% \hline
	%Random variable \phantom{ab}& Required initial behaviour & Failure event\phantom{ab}\\  \hline
	%$Q(G_0)$\phantom{\Big|} & $Q(G_0)\in \tq(0)(1\pm n^{-\delta})$ & $F^Q_{0}$ \\ \hline
	%$Y_e(G_0)$\phantom{\Big|}  & $Y_e(G_0)\in \ty(0)(1\pm n^{-\delta})$  & $F^{Y_e}_{0}$\\  \hline
	%$Y_{S}(G_0) $\phantom{\Big|}  & $Y_{S}(G_0)\le n^{\delta}\max\{1,\Ex{Y_{S}(G_0)}\}$ \phantom{abc} & $F^{Y_S}_{0}$\\  \hline
	%$e(G_0)$\phantom{\Big|}  & $e(G_0)\in p\binom{n}{2}\pm n^{3/2}$ & $F^{edge}$\\  \hline
	%\end{tabular}\vspace{4mm}
	%
	%Note that there is a failure event $F^{Y_e}_{0}$ for each $e\in E(K_n)$ and a $F^{Y_S}_{0}$ for set $S$ of three vertices.
	%
	%In each case the \emph{failure event} is the event that the required behaviour is \emph{not} satisfied.  
	We prove Proposition~\ref{prop:I} in Section~\ref{sec:I}.  Now, what remains is to prove that the evolution of $Q(G_m)$ follows the trajectory claimed in Theorem~\ref{thm:Q}.   Let $\tau_Q$ be the minimum of $m_{*}$ and the first value of $m$ such that $Q(G_m)$ leaves its allowed interval $\tq(m)(1\pm g_Q(m))$.  
	It clearly suffices to show that $\pr{0<\tau_Q < m_*}\to 0$ as $n\to \infty$, as this, together with Proposition~\ref{prop:I}, implies that $Q(G_m)$ remains in the required interval for all $m < m_*$.
	
	We also consider a stopping time $\tau_Y$ related to the behaviour of the $Y_e(G_m)$.  We set
	\begin{align}\label{eq:gY}
		g_Y(m)\, :=\, 10n^{-\delta}\prod_{i=0}^{m-1}\left(1\, +\, \frac{2\binom{k}{2}^2}{e(G_0)-i\binom{k}{2}}\right)\, \approx\, 10n^{-\delta} \exp \left (  2\binom{k}{2}^2 \sum\limits_{i=0}^{m-1} \dfrac{1}{e(G_0)-i \binom{k}{2}} \right )
	\end{align}
	to be the allowed relative error of the random variables $Y_e(G_m)$.  It is not hard to check that
	\begin{align}\label{eq:boundongQgY}
		n^{-\delta }\,\le\, g_Q(m)\, \le\, g_Y(m)\, \le\, n^{-\delta/4}
	\end{align}
	for all $m \le m_{*}$.
    These estimates are like in the proof of Theorem~\ref{thm:main}.
	For each edge $e\in K_n$, let $\tau_{Y_e}$ be the minimum of $m_{*}$ and the first value of $m$ such that $Y_e(G_m)\neq \ty(m)(1\pm g_Y(m))$.
	We now set
	\begin{align}\label{eq:stopping-times}
		\tau_Y=\min\{\tau_{Y_e}:e\in K_n\}\, ,
	\end{align}
	and set $\tau = 0$ if $G_0$ does not satisfy items (1)--(4) in Proposition~\ref{prop:I}, and otherwise
	\begin{align}\label{eq:defntau}
		\tau=\min\{\tau_Q,\tau_Y\}.
	\end{align} 
	One may think of $\tau$ as the stopping time which flags the first moment that ``something goes wrong''.  We shall prove the following two propositions.\vspace{2mm}
	
	\begin{prop}\label{prop:Q} $\pr{0<\tau=\tau_Q<m_{*}}\, \to\, 0$ as $n\to \infty$\end{prop}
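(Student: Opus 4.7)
The plan is to control $Q(G_m)$ up to step $m_*$ via a supermartingale argument of Wormald--Warnke type. On the event $\{m < \tau\}$ we have $\tau = \tau_Q \le \tau_Y$, so the $Y_e(G_m)$ satisfy $Y_e(G_m) = \ty(m)(1 \pm g_Y(m))$ and, by the monotonicity $Y_S(G_m) \le Y_S(G_0)$, also $Y_S(G_m) \le n^{\eps}\max\{1,\Ex{Y_S(G_0)}\}$ via Proposition~\ref{prop:I}(4). This gives precise control on the conditional one-step change of $Q$.

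The first step is the expectation calculation. When a uniformly random $k$-clique $K$ is removed from $G_m$, the number of destroyed $k$-cliques equals $|\{K' \subseteq G_m : E(K) \cap E(K') \neq \emptyset\}|$. Expanding this indicator by inclusion--exclusion over $E(K)$ and averaging over the $Q(G_m)$ choices of $K$ yields
\[
\Ex{Q(G_{m+1}) - Q(G_m) \mid G_m} \;=\; -\frac{1}{Q(G_m)}\sum_{e \in E(G_m)} Y_e(G_m)^2 \;+\; R_m,
\]
where every $K'$ sharing two edges with $K$ shares at least three vertices with $K$, so $|R_m|\le k^{O(1)}\max_{|S|=3}Y_S(G_m)$. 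Proposition~\ref{prop:I}(4) makes $R_m$ negligible compared to $q_m\tq(m)g_Q(m)$, where $q_m := \binom{k}{2}^2/e(G_m)$. For the main term, substituting $Y_e(G_m)=\ty(m)(1\pm g_Y(m))$, using the identity $\sum_{e}Y_e(G_m)=\binom{k}{2}Q(G_m)$, and $\ty(m) = \binom{k}{2}\tq(m)/e(G_m)$, one obtains
\[
\Ex{Q(G_{m+1})-Q(G_m)\mid G_m} \;=\; -q_m\, Q(G_m)\bigl(1+O(g_Y(m)^2)\bigr) + R_m.
\]

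Next, I would build supermartingales tracking the upper and lower deviations. The envelopes satisfy
\[
\tq(m+1)\bigl(1\pm g_Q(m+1)\bigr) - \tq(m)\bigl(1\pm g_Q(m)\bigr) \;=\; -q_m\tq(m)\bigl(1\pm q_m g_Q(m)\bigr),
\]
so at $Q(G_m) \approx \tq(m)(1+g_Q(m))$ the drift of $Q(G_m) - \tq(m)(1+g_Q(m))$ evaluates to $-q_m\tq(m)g_Q(m)(1-q_m)+O(q_m\tq(m)g_Y(m)^2) + R_m$, which is at most $-q_m\tq(m)g_Q(m)/2$ since $g_Q \gg g_Y^2$ by~\eqref{eq:boundongQgY}. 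A symmetric calculation applies at the lower boundary. This is the key one-sided drift inequality, and the precise product form of $g_Q(m)$ is designed so that the envelope widens at exactly the geometric rate needed for this inequality to hold throughout $m<m_*$.

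Finally, I would apply Freedman's inequality to the resulting supermartingales. Each one-step change of $Q$ is bounded by $\binom{k}{2}\max_e Y_e(G_m) \le 2\binom{k}{2}\ty(m) = O(k^4\tq(0)/(pn^2))$, and by Proposition~\ref{prop:I}(2) the initial gap from $Q(G_0)$ to either envelope is at least $\tq(0) n^{-\delta}$. Taking target $a=\tq(0)n^{-\delta}$ and quadratic variation $V \le m_* R^2 = O(k^4\tq(0)^2\log n / p)$, one computes $a^2/V = \Omega\bigl(pn^{2-2\delta}/(k^4\log n)\bigr)=n^{1+\Omega(1)}$ (using $\delta\le 1/10$), so Freedman's inequality gives failure probability at most $\exp(-n^{1+\Omega(1)})$. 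The main obstacle is the supermartingale construction: the one-sided drift computation above must be combined with a stopping-time argument (along the lines of Warnke's~\cite{warnkeL} reformulation of the Differential Equation Method) so that the mean-reverting character of $g_Q(m)$ promotes the boundary drift into a genuine supermartingale property; careful bookkeeping of the $O(g_Y^2)$ noise and of the $R_m$ correction then closes the argument.
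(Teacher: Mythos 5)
Your proposal follows essentially the same route as the paper: the same drift identity $\Ex{Q(G_{m+1})-Q(G_m)\mid G_m} = -Q(G_m)^{-1}\sum_{e}Y_e(G_m)^2 \pm O\big(k^3\max_{|S|=3}Y_S(G_0)\big)$, the same self-correcting comparison against the envelope $(1\pm g_Q(m))\tq(m)$, and a martingale concentration inequality at the end (the paper uses Hoeffding--Azuma for $Q$ and reserves Freedman for the $Y_e$'s, but your variance bookkeeping yields the same exponent). Two points need repair. First, the inequality $k^4 g_Y(m)^2 \ll g_Q(m)$ does \emph{not} follow from~\eqref{eq:boundongQgY}: those bounds only give $g_Y(m)^2\le n^{-\delta/2}$ while $g_Q(m)\ge n^{-\delta}$, which points the wrong way. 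You need the product definitions, namely $g_Y(m)^2/g_Q(m)=50\,n^{-\delta}\exp\big((3+o(1))\sum_{i<m}\binom{k}{2}^2/e(G_i)\big)\le n^{-\delta/4+o(1)}$, using that the exponential sum is at most $(1+o(1))\delta\log n/4$ for $m\le m_{*}$. Second, your Freedman application is set up from time $0$ with target $a=\tq(0)n^{-\delta}$, which presupposes that $X_m=Q(G_m)-(1+g_Q(m))\tq(m)$ is a supermartingale on all of $\{0,\dots,m^{\dagger}\}$; it is not, since the drift of $X_m$ is upward whenever $Q(G_m)$ falls below $\tq(m)$. You correctly flag this as the remaining obstacle, and the repair the paper carries out is to restart at the last index $i$ with $Q(G_{i-1})\le(1+g_Q(i-1)/2)\tq(i-1)$, observe that $X_i\le -g_Q(i)\tq(i)/4$ by the small one-step change, run the supermartingale only on the window where $Q(G_m)>(1+g_Q(m)/2)\tq(m)$ (this is where your one-sided drift inequality is actually valid), and union bound over the at most $m_{*}\le n^2$ choices of $i$; your per-window bound $\exp(-n^{1+\Omega(1)})$ comfortably survives this union bound, so the conclusion stands once this bookkeeping is inserted.
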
\vspace{1mm}
	
	\begin{prop}\label{prop:Y} $\pr{0<\tau=\tau_Y<m_{*}}\, \to\, 0$ as $n\to \infty$\end{prop}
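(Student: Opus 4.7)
The plan is to fix an edge $e\in E(K_n)$, prove $\pr{\tau_{Y_e} < \min\{\tau_Q, m_*\}} \le \exp(-n^{c})$ for some $c>0$ by a supermartingale argument, and then take a union bound over the $\binom{n}{2}$ edges. For each $e$, introduce the two auxiliary processes
\[
U_e^{+}(m) \,:=\, Y_e(G_m) - \ty(m)\bigl(1 + g_Y(m)\bigr)\quad\text{and}\quad
U_e^{-}(m) \,:=\, \ty(m)\bigl(1 - g_Y(m)\bigr) - Y_e(G_m),
\]
each stopped at $\tau \wedge m_*$. On the event $\{\tau = \tau_Y < m_*\}$ one of these is strictly positive for some $m$, so it suffices to show $\pr{\max_{m \le m_*} U_e^{\pm}(m) > 0} \le \exp(-n^{c})$.

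The core step is a one-step drift computation for $Y_e$ on $\{\tau > m\}$. When a uniformly random $k$-clique $K \se G_m$ is removed, double counting gives
\[
\Ex{-\Delta Y_e \mid G_m} \,=\, \frac{1}{Q(G_m)}\sum_{K' \ni e}\bigl|\bigl\{K \se G_m : E(K) \cap E(K') \ne \emptyset\bigr\}\bigr|,
\]
the outer sum running over $k$-cliques $K' \se G_m \cup \{e\}$ containing $e$. Inclusion--exclusion produces the main contribution $\sum_{e'\in E(K')\setminus\{e\}}Y_{e'}(G_m) = (\binom{k}{2}-1)(1 \pm g_Y(m))\ty(m)$ per $K'$, while the over-counting from $k$-cliques sharing two edges with $K'$ is controlled by Proposition~\ref{prop:I}(4) applied to the triples $S \se V(K')$; under $p = n^{o(1)}$ this contribution is negligible compared to the main term. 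Combining with $Q(G_m) = \tq(m)(1\pm g_Q(m))$ and the identity $\ty(m) = \binom{k}{2}\tq(m)/e(G_m)$ yields
\[
\frac{\Ex{\Delta Y_e \mid G_m}}{Y_e(G_m)} \,=\, -\frac{\binom{k}{2}(\binom{k}{2}-1)}{e(G_m)}\bigl(1 \pm O(g_Y(m))\bigr),
\]
which matches the deterministic relative decay $\Delta \ty(m)/\ty(m) = -\binom{k}{2}(\binom{k}{2}-1)/e(G_m) + O(1/e(G_m)^2)$ to leading order. The growth rate $(1 + 2\binom{k}{2}^2/e(G_m))$ built into $g_Y$ provides a margin of order $g_Y(m)\binom{k}{2}^2/e(G_m)$ per step, which absorbs the $O(g_Y\binom{k}{2}^2/e(G_m))$ drift error and leaves each $U_e^{\pm}$ with non-positive expected increment on $\{\tau > m\}$.

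For the concentration, bound the one-step change by $|\Delta Y_e| \le \sum_{e'' \in E(K)}|\{K' \ni e : e'' \in E(K')\}|$ and control each inner count either by Proposition~\ref{prop:I}(4) (when $e$ and $e''$ share a vertex) or by a four-vertex analogue proved along the same lines; this furnishes a deterministic bound $D$ much smaller than the allowed deviation $\ty(m)g_Y(m)$. With this uniform one-step bound and the conditional quadratic variation bounded by $|\Ex{\Delta Y_e \mid G_m}| \cdot D$, Freedman's inequality applied to each supermartingale $U_e^{\pm}$ yields the $\exp(-n^{c})$ tail, which comfortably survives the union bound over the $\binom{n}{2}$ edges. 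The main obstacle will be the drift step: one must carry inclusion--exclusion far enough to see that both the combinatorial over-counting (controlled by Proposition~\ref{prop:I}(4)) and the multiplicative errors inherited from $Q(G_m)$ and the $Y_{e'}(G_m)$'s on $\{\tau > m\}$ fit inside the narrow per-step margin $\sim g_Y(m)\binom{k}{2}^2/e(G_m)$; this is precisely why $g_Y$ was defined to grow \emph{twice} as fast as $g_Q$.
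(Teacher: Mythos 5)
Your overall architecture matches the paper's: encode the failure as a deviation of a stopped process $Y_e(G_m)-(1+g_Y(m))\ty(m)$, compute the drift via the double count over cliques $K'\ni e$ sharing an edge with the removed clique, control the overcount and the maximum increment via Proposition~\ref{prop:I}(4), bound the quadratic variation by (max increment)$\times$(expected absolute increment), apply Freedman, and union bound over edges. (One small simplification you missed: no ``four-vertex analogue'' is needed for the increment bound, since a clique containing both $e$ and $e''$ contains some $3$-set $S\se V(e)\cup V(e'')$, so $Y_{e,e''}(G_m)\le \max_{|S|=3}Y_S(G_m)$ and Proposition~\ref{prop:I}(4) already covers both cases.)

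However, there is a genuine gap at the central step: the claim that $U_e^{+}$ has non-positive expected increment \emph{on all of} $\{\tau>m\}$ is false with the $g_Y$ defined in the paper. Track the coefficients of $g_Y(m)\binom{k}{2}^2\ty(m)/e(G_m)$. The margin supplied by the growth rate $1+2\binom{k}{2}^2/e(G_m)$ of $g_Y$ is, after subtracting the decay of $\ty$, only about $1$ (this is the content of Claim~\ref{claim:deltaty}: $R_m'\approx g_Y(m)\big(\binom{k}{2}^2+\binom{k}{2}\big)\ty(m)/e(G_m)$). The drift error, on the other hand, picks up a factor $(1\pm g_Y(m))$ from each $Y_f(G_m)$, a factor $(1\pm g_Q(m))$ from $Q(G_m)$, \emph{and} a factor $(1\pm g_Y(m))$ from replacing $Y_e(G_m)$ itself by $\ty(m)$ via the crude two-sided bound — a total coefficient of roughly $2+g_Q/g_Y>1$. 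So the inequality you need does not close; the factor-of-two in $g_Y$'s growth is not what rescues the argument. The paper's fix is to exploit the self-correcting nature of the drift, which is proportional to $Y_e(G_m)$ rather than to $\ty(m)$: it proves the supermartingale property only on the random interval where $Y_e(G_m)>(1+g_Y(m)/2)\ty(m)$ (so that the substitution for $Y_e$ contributes $-1/2$ instead of $+1$, giving total error coefficient $\le 3/4<1$), and then decomposes the failure event over the last time $i$ at which the process entered that region, applying Freedman separately to each $(X_m')_{m\ge i}$ started from $X_i'\le -g_Y(i)\ty(i)/4$ and union bounding over the at most $m_*$ choices of $i$. You would either need to adopt this decomposition, or redefine $g_Y$ with a strictly larger growth constant (at least $3+g_Q/g_Y$ in place of $2$) and re-verify that $g_Y(m_*)\ll 1$; as written, your supermartingale claim fails.
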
\vspace{1mm}
	
	These propositions are proved in Sections~\ref{sec:Q} and~\ref{sec:Y} respectively.  It may be observed that Theorem~\ref{thm:Q} follows from the Propositions~\ref{prop:I},~\ref{prop:Q} and~\ref{prop:Y}.  
 
 \begin{proof}[Proof of Theorem~\ref{thm:Q}]
 By Propositions~\ref{prop:I},~\ref{prop:Q} and~\ref{prop:Y}, each of the events $\tau=0$, $0<\tau=\tau_Q<m_{*}$ and $0<\tau=\tau_Y<m_{*}$ has probability $o(1)$.  And so, with high probability $\tau_Q=\tau=m_{*}$.  By considering the definition of $\tau_Q$ as the minimum of $m_*$ and the first time $Q(G_m)$ leaves its allowed interval $\tq(m)(1\pm g_Q(m))$ the theorem clearly follows.\end{proof}

	\subsection{The method of proof and concentration inequalities}\label{sub:the-method}

	Now that we have given an overview of the structure of the proof, let us say something about methods.
	
	In Section~\ref{sec:I}, we must prove inequalities related to the random variables $Q(G_0),Y_e(G_0)$ and $Y_S(G_0)$ in the random graph $G_0\sim G(n,p)$.  We begin by proving rough bounds up to a factor of $n^{\delta}$ using the moment method (effectively Markov's inequality applied to a large power of the random variable).  Equipped with these rough bounds we then prove more refined bounds using the deletion method~\cite{JR1,JR2}. See, for example, Theorem 2.1 in~\cite{JR2}.

	\begin{theorem}[The deletion method]\label{thm:deletion-method}
		Suppose that $\{Y_{\alpha}: \alpha \in \mathcal{A}\}$, is a finite family of non-negative random variables and that $\sim$ is a symmetric relation on the index set $\A$ such that each $Y_{\alpha}$ is independent of $\{Y_{\beta}:\beta \not \sim \alpha\}$; in other words, the pairs $(\alpha,\beta)$ with $\alpha \sim \beta$ define the edge set of a (weak) dependency graph for the variables $Y_{\alpha}$.
		Let $X := \sum_{\alpha} Y_{\alpha}$ and $\mu = \Ex{X}$.
		For $\alpha \in \A$, let $\tilde{X}_{\alpha}=\sum_{\beta \sim \alpha} Y_{\beta}$ and $X^{*} = \max_{\alpha \in \A} \tilde{X}_{\alpha}$.
		If $t>0$, then for every real $r>0$,
		\[\Pr(X \ge \mu+t)\le \left(1+\dfrac{t}{2\mu}\right)^{-r}+ \Pr \left(X^{*}> \dfrac{t}{2r}\right).\]
	\end{theorem}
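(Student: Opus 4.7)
The plan is to follow the deletion-method paradigm of Janson and Ruci\'nski. I would set $s := t/(2r)$ and begin with the trivial splitting
\[
\pr{X \ge \mu + t} \, \le \, \pr{X \ge \mu + t,\ X^* \le s} \, + \, \pr{X^* > s},
\]
in which the second summand already matches the second term of the claimed bound. So the task reduces to bounding the first summand by $(1 + t/(2\mu))^{-r}$.

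For this I would introduce a truncated, ``deleted'' version of $X$, namely
\[
Z \, := \, \sum_{\alpha \in \A} Y_\alpha \cdot \mathbf{1}\big[\tilde{X}_\alpha \le s\big],
\]
which satisfies $Z \le X$ pointwise and $Z = X$ on the event $\{X^* \le s\}$. Hence $\pr{X \ge \mu + t,\ X^* \le s} \le \pr{Z \ge \mu + t}$, and by Markov's inequality applied to the $r$-th moment (with the usual interpolation for non-integer $r$) it would suffice to show that $\Ex{Z^r} \le (\mu + t/2)^r$.

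The main technical content lies in this moment estimate. Expanding $\Ex{Z^r}$ as a sum over tuples $(\alpha_1, \dots, \alpha_r)$, those tuples whose indices form an independent set in the dependency graph contribute exactly $\prod_i \Ex{Y_{\alpha_i}}$ (by the independence hypothesis on $\{Y_\beta : \beta \not\sim \alpha\}$) and sum to at most $\mu^r$. Tuples containing an adjacent pair $\alpha_i \sim \alpha_j$ must be handled using the truncation: when the indicator for $\alpha_j$ is active, $\tilde{X}_{\alpha_j} \le s$, which controls the factor $Y_{\alpha_i}$ whenever $\alpha_i$ is a neighbour of $\alpha_j$. Spreading the budget $s = t/(2r)$ across up to $r$ such ``adjacency pairings'' should produce the additional factor in $\Ex{Z^r} \le (\mu + t/2)^r$, and rearranging Markov then yields the first term of the claimed bound.

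The principal obstacle is the combinatorial bookkeeping in this moment step: one must peel off adjacent factors one at a time while preserving enough independence among the remaining factors, and the truncation budget must be distributed among at most $r$ such peelings in order to hit the constant $2$ inside $(1 + t/(2\mu))^{-r}$ precisely. Non-integer $r$ can be accommodated either by a H\"older interpolation or by the continuous analogue of the moment method. These moves are standard for the deletion method but require some care to land on the correct constants.
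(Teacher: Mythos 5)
This theorem is not proved in the paper at all: it is quoted verbatim from Janson and Ruci\'nski (Theorem 2.1 of [JR2], ``The deletion method for upper tail estimates''), so there is no in-paper proof to compare against. Judged on its own terms, your sketch captures the right first move (split on $\{X^*\le t/(2r)\}$) but then diverges from the standard argument in a way that creates a genuine gap.

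The concrete problem is the final Markov step. You aim for $\Ex{Z^r}\le(\mu+t/2)^r$ and then apply Markov at the threshold $\mu+t$, which yields
\[
\pr{Z\ge\mu+t}\;\le\;\left(\frac{\mu+t/2}{\mu+t}\right)^{r},
\]
whereas the theorem asserts the bound $\left(1+\tfrac{t}{2\mu}\right)^{-r}=\left(\tfrac{\mu}{\mu+t/2}\right)^{r}$. Since $(\mu+t/2)^2>\mu(\mu+t)$, your bound is strictly weaker (e.g.\ $\mu=1$, $t=2$ gives $(2/3)^r$ versus the claimed $(1/2)^r$), so even if your moment estimate were established the stated inequality would not follow. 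There is a secondary issue in the moment estimate itself: for tuples containing adjacent pairs you propose to bound individual factors $Y_{\alpha_i}$ by $s$, but the number of indices adjacent to a given $\alpha_j$ can be as large as $|\A|$, so summing over the choices of such coordinates term-by-term blows up; one must instead bound the \emph{sum} over a dependent coordinate by $\tilde X_{\alpha_j}\le s$ collectively. The Janson--Ruci\'nski proof sidesteps both difficulties: it works with $Z_r=\sum^{*}Y_{\alpha_1}\cdots Y_{\alpha_r}$, the sum over $r$-tuples whose indices form an independent set in the dependency graph (no truncation indicators), shows $\Ex{Z_r}\le\mu^{r}$ by peeling off one coordinate at a time using the hypothesis that $Y_\alpha$ is independent of the joint family of its non-neighbours, and shows that \emph{pointwise} on the event $\{X\ge\mu+t,\;X^*\le t/(2r)\}$ one has $Z_r\ge(\mu+t/2)^{r}$, because after deleting the closed neighbourhoods of $j<r$ chosen indices at least $X-j\cdot t/(2r)-\dots\ge\mu+t/2$ of the mass of $X$ survives. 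Markov applied at the threshold $(\mu+t/2)^{r}$ then gives exactly $(\mu/(\mu+t/2))^{r}$. If you want to repair your write-up, you should switch to this independent-tuples variable and move the ``$+t/2$'' from the moment bound into the Markov threshold; the extension from integer to real $r$ also needs a genuine argument rather than an appeal to interpolation.
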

	
	Another inequality that we shall use to prove refined bounds on $Q(G_0)$ and $Y_e(G_0)$ is Janson's inequality~\cite{Jan90,JLR}. See, for example, Theorem 2.14 in~\cite{JLR}.

	\begin{theorem}[Janson's inequality]\label{thm:janson}
		Let $p \in (0,1)$, $\Gamma$ be a finite set and $\Gamma_p$ be the binomial random subset of $\Gamma$. 
		Let $I_A$ be the random variable which is 1 if $A \se \Gamma_p$ and 0 otherwise.
		Let $\S$ be a family of non-empty subsets of $\Gamma$ and set $X = \sum_{A \in \S} I_A$. That is, $X$ is the number of sets $A \in S$ that are contained in $\Gamma_p$.
		Let $\bar{\Delta} = \sum_{(A,B):A\cap B \neq \emptyset} \Ex{I_AI_B}$, where the sum is over $(A,B)\in \mathcal{S}^2$ such that $A\cap B \neq \emptyset$.
		Then, for $0 \le t \le \Ex{X}$, we have
		\[\pr{X \le \Ex{X}-t} \le \exp \left(- \dfrac{t^2}{2 \bar{\Delta}}\right).\]
	\end{theorem}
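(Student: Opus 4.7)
The plan is to bound the lower tail via the exponential moment method, since $X$ is a sum of indicators of increasing events with a well-controlled dependency structure. For any parameter $u > 0$, Markov's inequality applied to $e^{-uX}$ gives
\[
\pr{X \le \lambda - t} \,\le\, e^{u(\lambda - t)}\, \Ex{e^{-uX}},
\]
so everything reduces to estimating $\Ex{e^{-uX}}$ in terms of $\lambda$ and $\bar{\Delta}$, and then optimizing over $u$.

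To estimate the moment generating function, I would first write
\[
\Ex{e^{-uX}} \,=\, \Ex{\prod_{A \in \S} \bigl(1 - (1-e^{-u})I_A\bigr)}.
\]
When $A \cap B = \emptyset$, the indicators $I_A$ and $I_B$ depend on disjoint coordinates of $\Gamma_p$ and are therefore independent, so the corresponding factors multiply cleanly in expectation. The only obstruction to converting the full expectation into a product of marginals comes from pairs $A,B$ with $A \cap B \neq \emptyset$, whose correlation contribution is captured exactly by $\bar{\Delta}$.

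The cleanest way to quantify this is to work with $\phi(u) := -\log \Ex{e^{-uX}}$, which satisfies $\phi(0) = 0$ and $\phi'(0) = \lambda$. I would aim to establish the differential inequality $\phi'(u) \ge \lambda - u\bar{\Delta}$ on $[0,1]$. Integrating from $0$ to $u$ yields $\phi(u) \ge u\lambda - u^2 \bar{\Delta}/2$, i.e.\ $\Ex{e^{-uX}} \le \exp(-u\lambda + u^2\bar{\Delta}/2)$. Plugged into the Markov bound above, this gives $\pr{X \le \lambda - t} \le \exp(-ut + u^2\bar{\Delta}/2)$, and optimizing at $u = t/\bar{\Delta}$ (which lies in $[0,1]$ because $t \le \lambda \le \bar{\Delta}$, since $\bar{\Delta}$ includes the diagonal terms $A = B$ for which $\Ex{I_A I_A} = \Ex{I_A}$) produces the claimed bound $\exp(-t^2/(2\bar{\Delta}))$.

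The main obstacle is proving the differential inequality $\phi'(u) \ge \lambda - u\bar{\Delta}$. Computing $\phi'(u)$ gives the expectation of $X$ under the tilted probability with density proportional to $e^{-uX}$. Since this density is a decreasing function of the underlying independent Bernoulli coordinates of $\Gamma_p$, while each $I_A$ is increasing, an FKG-type correlation inequality lets us compare the tilted mean of $I_A$ with $\Ex{I_A}$; a linearization in $u$ shows that the shortfall on each $A$ is controlled by $u \sum_{B:\, A \cap B \neq \emptyset} \Ex{I_A I_B}$, and summing over $A$ yields exactly the $u\bar{\Delta}$ loss. Making this FKG-based comparison rigorous (for instance, by writing $e^{-uX}$ as a telescoping product and applying Harris's inequality coordinate by coordinate) is the heart of the argument.
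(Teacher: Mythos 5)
The paper does not prove this statement at all: it is quoted as a classical result and cited to Janson and to Theorem~2.14 of Janson--{\L}uczak--Ruci\'nski, so there is no internal proof to compare against. Your outline is precisely the standard Boppana--Spencer-style proof of that cited theorem (exponential moment method, the differential inequality $\phi'(u)\ge \lambda-u\bar{\Delta}$ for $\phi(u)=-\log\Ex{e^{-uX}}$, integration, and optimisation at $u=t/\bar{\Delta}$, which is legitimate since the diagonal pairs force $\bar{\Delta}\ge\lambda\ge t$). All of the surrounding steps are correct.

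The one place where your sketch, as literally phrased, would lead you astray is the FKG step. Harris's inequality applied directly to the increasing function $I_A$ and the decreasing function $e^{-uX}$ gives $\Ex{I_Ae^{-uX}}\le \Ex{I_A}\Ex{e^{-uX}}$, i.e.\ an \emph{upper} bound on the tilted mean of $I_A$ and hence on $\phi'(u)$ --- the wrong direction. The correct route, which your ``telescoping/coordinate-wise'' parenthetical is gesturing at, is to split $X=Y_A+Z_A$ with $Y_A=\sum_{B:\,B\cap A\ne\emptyset}I_B$ and $Z_A=\sum_{B:\,B\cap A=\emptyset}I_B$, write $\Ex{I_Ae^{-uX}}=\Ex{I_A}\,\Ex{e^{-uY_A}e^{-uZ_A}\mid I_A=1}$, and apply Harris under the conditional (still product) measure to the \emph{two decreasing} functions $e^{-uY_A}$ and $e^{-uZ_A}$ to get
\begin{equation*}
\Ex{I_Ae^{-uX}}\;\ge\;\Ex{I_A}\,\Ex{e^{-uY_A}\mid I_A=1}\,\Ex{e^{-uZ_A}\mid I_A=1}\;\ge\;\Ex{I_A}\bigl(1-u\,\Ex{Y_A\mid I_A=1}\bigr)\Ex{e^{-uX}},
\end{equation*}
using $e^{-x}\ge1-x$, the independence of $Z_A$ from $I_A$, and $Z_A\le X$. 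Since $\Ex{I_A}\Ex{Y_A\mid I_A=1}=\sum_{B:\,B\cap A\ne\emptyset}\Ex{I_AI_B}$, summing over $A$ yields exactly $\phi'(u)\ge\lambda-u\bar{\Delta}$. With that substitution your argument is complete and agrees with the proof in the cited reference.
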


	The proofs in Sections~\ref{sec:Q} and~\ref{sec:Y} are based on supermartingales and related inequalities.  In particular we shall use the well known Hoeffding--Azuma inequality~\cite{Azuma,Hoeff}.
	Before we state it, we need some definitions.
	Let $(\F_i)_{i=0}^{m}$ be a filtration, that is, a sequence of $\sigma$-algebras such that $\F_0 \se \F_1 \se \cdots \se \F_m$.
	We say that the discrete random process $(X_i,\F_i)_{i=0}^{m}$ is a supermartingale if $\Ex{X_i|\F_{i-1}}\le X_{i-1}$ for all $i \in \{1,\ldots,m\}$.
	The  Hoeffding--Azuma inequality can be stated as follows.
	
	\begin{lemma}[Hoeffding--Azuma inequality]\label{lem:hoeff-azuma}
		Let $(X_i,\F_i)_{i=0}^{m}$ be a supermartingale with increments satisfying $|X_{i}-X_{i-1}|\le c_i$ almost surely for all $i \in \{1,\ldots,m\}$.
		Then, for every $\alpha\ge 0$, we have
		\[
		\pr{X_m-X_0\, \ge \, \alpha}\, \le\, \exp\left(\frac{-\alpha^2}{2\sum_{i=1}^{m}c_i^2}\right)\, .
		\]
	\end{lemma}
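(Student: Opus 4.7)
The plan is to use the standard exponential (Chernoff/Cramér) method. First I would apply Markov's inequality to the random variable $e^{\lambda(X_m - X_0)}$, for a parameter $\lambda > 0$ to be optimised later, obtaining
\[
\pr{X_m - X_0 \ge \alpha}\, \le\, e^{-\lambda \alpha}\, \E\bigl[e^{\lambda(X_m - X_0)}\bigr].
\]
Writing $D_i := X_i - X_{i-1}$, the exponent telescopes as $X_m - X_0 = \sum_{i=1}^m D_i$, so I would bound the moment generating function by iterated conditioning against the filtration $(\F_i)$: using the tower property, peel off one factor at a time via
\[
\E\bigl[e^{\lambda \sum_{i=1}^{j} D_i}\bigr]\, =\, \E\bigl[e^{\lambda \sum_{i=1}^{j-1} D_i}\, \E[e^{\lambda D_j}\mid \F_{j-1}]\bigr].
\]

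The key step, which is the main technical input, is the conditional bound
\[
\E\bigl[e^{\lambda D_i}\mid \F_{i-1}\bigr]\, \le\, e^{\lambda^2 c_i^2/2}.
\]
To prove this, I would combine two ingredients: (i) the supermartingale hypothesis gives $\E[D_i \mid \F_{i-1}] \le 0$, and (ii) $|D_i| \le c_i$ almost surely. Then, by convexity of $t \mapsto e^{\lambda t}$ on $[-c_i, c_i]$, one has the pointwise linear upper bound
\[
e^{\lambda t}\, \le\, \frac{c_i - t}{2c_i}\, e^{-\lambda c_i}\, +\, \frac{c_i + t}{2c_i}\, e^{\lambda c_i}.
\]
Taking conditional expectation and using $\E[D_i \mid \F_{i-1}] \le 0$ (together with $e^{\lambda c_i} \ge e^{-\lambda c_i}$, so the coefficient of the mean is non-negative) yields an upper bound of $\cosh(\lambda c_i)$, which in turn is at most $e^{\lambda^2 c_i^2/2}$ by comparing Taylor series termwise.

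Iterating this bound through all $m$ steps gives
\[
\E\bigl[e^{\lambda(X_m - X_0)}\bigr]\, \le\, \exp\!\left(\frac{\lambda^2}{2}\sum_{i=1}^m c_i^2\right),
\]
and substituting into the Markov bound yields $\pr{X_m - X_0 \ge \alpha} \le \exp(-\lambda \alpha + \lambda^2 S/2)$, where $S := \sum_{i=1}^m c_i^2$. Optimising over $\lambda > 0$ by taking $\lambda = \alpha / S$ gives the claimed bound $\exp(-\alpha^2/(2S))$. The only delicate part is the cosh inequality and checking that the supermartingale (rather than martingale) hypothesis is enough; this works because the coefficient of $\E[D_i \mid \F_{i-1}]$ in the linear bound is non-negative, so replacing this conditional mean by $0$ only increases the right-hand side.
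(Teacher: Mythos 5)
Your proposal is correct and is precisely the classical argument from the cited sources (Hoeffding's lemma via the convexity/$\cosh$ bound, combined with the Chernoff--Cram\'er method and optimisation over $\lambda$); the paper itself states Lemma~\ref{lem:hoeff-azuma} without proof, citing~\cite{Azuma,Hoeff}. Your handling of the supermartingale (rather than martingale) hypothesis is the right one: the coefficient $(e^{\lambda c_i}-e^{-\lambda c_i})/(2c_i)$ multiplying $\E[D_i\mid\F_{i-1}]$ is non-negative for $\lambda>0$, so the one-sided condition $\E[D_i\mid\F_{i-1}]\le 0$ suffices for the one-sided tail bound claimed.
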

	
	In some applications, when $|X_i-X_{i-1}|$ is typically much smaller than $\|X_i-X_{i-1}\|_{\infty}$, the following inequality of Freedman~\cite{F} gives a better bound.
	
	\begin{lemma}[Freedman's inequality]\label{lem:Freedman}
		Let $(X_i,\F_i)_{i=0}^{m}$ be a supermartingale and let $R \in \R$ be such that $\max_i |X_i-X_{i-1}| \le R$.
		Let $V(m)$ be the quadratic variation of the process until step $m$, that is,
		\[
		V(m)\, :=\, \sum_{i=1}^{m}  \Ex{ (X_i-X_{i-1})^2 | \F_{i-1} }.
		\]
		Then, for every $\alpha, \beta > 0$, we have
		\[
		\pr{X_m- X_0 \ge \alpha \mbox{     and     } V(m) \le \beta }\, \le\, \exp \left( \frac{-\alpha^2}{2(\beta + R\alpha)} \right).
		\]
	\end{lemma}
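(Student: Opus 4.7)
The plan is to use the Cram\'er--Chernoff (exponential supermartingale) method.  Fix a parameter $\lambda > 0$ to be optimised at the end of the argument, and set
\[
W_i \, :=\, \exp\bigl(\lambda (X_i - X_0) - \phi(\lambda) V(i)\bigr) \quad \text{with} \quad \phi(\lambda) \, :=\, \frac{e^{\lambda R} - 1 - \lambda R}{R^2},
\]
adopting the convention $V(0) = 0$ so that $W_0 = 1$.

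The first key step is to verify that $(W_i,\F_i)_{i=0}^m$ is a supermartingale.  This rests on the pointwise inequality
\[
e^{\lambda y} \, \le\, 1 + \lambda y + \phi(\lambda) \, y^2 \qquad \text{valid for all } |y| \le R,
\]
which follows by noting that $y \mapsto (e^{\lambda y} - 1 - \lambda y) / y^2$ is non-decreasing on $\R$ (a routine check via its power series or by differentiation).  Applying this with $y = X_i - X_{i-1}$, taking conditional expectation on $\F_{i-1}$, and using both the supermartingale hypothesis $\Ex{X_i - X_{i-1} | \F_{i-1}} \le 0$ together with $1+x \le e^x$, we obtain
\[
\Ex{\exp\bigl(\lambda(X_i - X_{i-1})\bigr) | \F_{i-1}} \, \le\, \exp\bigl(\phi(\lambda) \Ex{(X_i - X_{i-1})^2 | \F_{i-1}}\bigr),
\]
which is equivalent to $\Ex{W_i | \F_{i-1}} \le W_{i-1}$.

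Having established $\Ex{W_m} \le W_0 = 1$, the remainder is a standard Markov argument.  On the event $\{X_m - X_0 \ge \alpha,\ V(m) \le \beta\}$ the definition of $W_m$ gives $W_m \ge \exp\bigl(\lambda \alpha - \phi(\lambda) \beta\bigr)$, so Markov's inequality yields
\[
\pr{X_m - X_0 \ge \alpha \text{ and } V(m) \le \beta} \, \le\, \exp\bigl(-\lambda \alpha + \phi(\lambda) \beta\bigr).
\]
We then optimise by taking $\lambda := \alpha/(\beta + R\alpha)$, which ensures $u := \lambda R \in [0,1)$.  The elementary inequality $e^u - 1 - u \le u^2/(2(1-u))$ on this range (verified by comparing power series term-by-term) gives $\phi(\lambda) \le \lambda^2/(2(1 - \lambda R))$, and a direct substitution using $1 - \lambda R = \beta/(\beta + R\alpha)$ reduces the exponent to the desired $-\alpha^2/(2(\beta + R\alpha))$.

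The main obstacle is the supermartingale step, which rests on the sharp pointwise bound for $e^{\lambda y}$.  The final optimisation over $\lambda$ is routine, but recovering the precise Freedman constant $2(\beta + R\alpha)$ requires the sharper estimate $e^u - 1 - u \le u^2/(2(1-u))$ rather than a cruder bound such as $e^u - 1 - u \le \tfrac{1}{2} u^2 e^u$, which would otherwise inflate the denominator in the exponent.
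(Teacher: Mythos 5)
Your proof is correct and is essentially the canonical argument for Freedman's inequality (the exponential supermartingale $W_i$ with the tilt $\phi(\lambda)=(e^{\lambda R}-1-\lambda R)/R^2$, Markov, then the choice $\lambda=\alpha/(\beta+R\alpha)$ together with $e^u-1-u\le u^2/(2(1-u))$); the paper itself states this lemma without proof, citing Freedman's 1975 article, and your argument matches that standard proof. The only step worth making explicit is that $V(i)$ is $\F_{i-1}$-measurable (each summand $\Ex{(X_j-X_{j-1})^2\,|\,\F_{j-1}}$ with $j\le i$ is $\F_{i-1}$-measurable), which is exactly what lets you pull $\exp(-\phi(\lambda)V(i))$ outside the conditional expectation when verifying $\Ex{W_i\,|\,\F_{i-1}}\le W_{i-1}$.
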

	
	There are then two main challenges involved in applying this inequality to the random variables we wish to track
	\begin{enumerate}
		\item[(i)] We must encode failure events as deviation events for supermartingales
		\item[(ii)] We must control the quadratic variation and give an almost sure bound for the magnitude of the increments of these supermartingales.
	\end{enumerate}
	
	These details are covered in Sections~\ref{sec:Q} and~\ref{sec:Y}.  We remark that (i) is related to the (conditional) expected change of the random variable in question, and it is for this reason that some control of the $Y_e(G_m)$ is necessary to control $Q(G_m)$.  
	With respect to (ii) the maximum increments are again related to the one-step change in our random variables.  This is why some control of $Y_S(G_m)$ for sets of size $3$ is necessary to understand the evolution of the $Y_e(G_m)$.  We do not do anything special to control the quadratic variation, we simply use information about the expected change and the maximum change to obtain a bound.
	
	At last, let us briefly explain how to to encode failure events as deviation events for supermartigales and how this relates to the choice of the error functions.
    For simplicity, in this discussion we only consider deviations of $Q(G_m)$ above its expected trajectory.
	Although the intuition we give here differs slightly from our analysis, it is instructive to keep this rough idea in mind.
	Let $\tau$ be the stopping time given in~\eqref{eq:defntau}.
	Define the random process $(X_m)_{m\ge0}$ as
	\[
	X_m\, :=\, \begin{cases} Q(G_{m})\, -\, (1+g_Q(m))\tq(m),& \text{ if } m\le \tau\\  
		Q(G_{\tau})\, -\, (1+g_Q(\tau))\tq(\tau),& \text{ if } m> \tau\,  .\end{cases}
	\]
	In order to use concentration inequalities such as the Hoeffding--Azuma and Freedman inequalities, we must show that $(X_m)_{m\ge 0}$ is a supermartingale.
	Suppose that $Q(G_m) = (1\pm g_Q(m))\tq(m)$ with very high probability.
	As the average number of $k$-cliques containing an edge in $G_m$ is equal to $\binom{k}{2}Q(G_m)/e(G_m)$, we expect the conditional expectation $\Ex{Q(G_{m+1})-Q(G_m)|G_m}$ to be roughly
	\begin{align}\label{eq:cond-Q}
		\Ex{Q(G_{m+1})-Q(G_m)|G_m} \approx 
		- \binom{k}{2}^2 \dfrac{Q(G_m)}{e(G_m)} 
		= - \binom{k}{2}^2 \dfrac{\tq(m)}{e(G_m)} (1\pm g_Q(m)).
	\end{align}
	On the other hand, we have
	\begin{align}\label{eq:cond-tq}
		\tq(m+1)-\tq(m) = - \binom{k}{2}^2 \dfrac{\tq(m)}{e(G_m)} 
	\end{align}
    By subtracting~\eqref{eq:cond-tq} from~\eqref{eq:cond-Q} and using the definition of $(X_m)_{m\ge 0}$, we expect to have
	\[\Ex{X_{m+1}-X_m|G_m} \approx \left (1 \, \pm \,  \dfrac{\binom{k}{2}^2}{e(G_m)} \right)(g_Q\tq)(m)- (g_Q\tq)(m+1).\]
	For $(X_m)_{m\ge0}$ to be a supermartingale almost surely, we must have $\Ex{X_{m+1}-X_m|G_m} \le 0$, which is equivalent to
	\[g_Q(m+1)\ge \left (1 \, + \,  \dfrac{\binom{k}{2}^2}{e(G_m)} \right)g_Q(m).\]
	Our error function $g_Q$ was chosen precisely so that the above inequality could be satisfied.
	\section{Initial values of the random variables}\label{sec:I}

	In this section we prove Proposition~\ref{prop:refined} about the behaviour of $e(G_0)$, $Q(G_0),Y_e(G_0)$ and $Y_S(G_0)$.
	By Chernoff's inequality, we have $e(G_0) \in pn^2 \pm n^{3/2}$ with high probability.
	Thus, we only need to focus on showing that the remaining variables are highly concentrated around their means.
	
	\subsection{Rough upper bounds using moments}\label{sec:Irough}
	
	To prove Proposition~\ref{prop:refined}, it is necessary to show a rough upper bound on $Y_S(G_0)$ for sets $S$ of size $3$ and show that $Q(G_0)$ and $Y_e(G_0)$ concentrate around their means.
	It will actually be helpful to first prove rough upper bounds for all these quantities. 
	
	\begin{prop}\label{prop:rough} Given $\eps, p_{+}\in (0,1)$ constants, $\gamma_{-} >2$ and $(p,n,k)\in \Lambda_{\gamma_-,p_+}$, for all sufficiently large $n$, there is probability at least $1-n^{-4}$ that
		\[
		Q(G_0)\, \le\, n^{\eps}\Ex{Q(G_0)}\quad ,\quad \max_{e\in E(K_n)}Y_{e}(G_0)\, \le\,n^{\eps}\Ex{Y_{e}(G_0)}
		\]
		and 
		\[
		\max_{S:|S|=3}Y_S(G_0)\, \le\, n^{\eps}\max\{1,\Ex{Y_{S}(G_0)}\}\, .
		\]
	\end{prop}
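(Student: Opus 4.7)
The plan is to apply Markov's inequality at a high fixed power $t$ of each variable. For any non-negative random variable $X$ with $\mu_X:=\max\{1,\Ex{X}\}$, Markov gives
\[
\pr{X\ge n^{\eps}\mu_X}\, \le\, \frac{\Ex{X^t}}{(n^{\eps}\mu_X)^t}\, .
\]
Setting $t:=\lceil 10/\eps\rceil$, it suffices to show that for each $X\in\{Q(G_0), Y_e(G_0), Y_S(G_0)\}$ there is a constant $C_t=C_t(t,\gamma_-,p_+)$ with $\Ex{X^t}\le C_t\,\mu_X^t$; the right-hand side is then $O(n^{-10})$, and a union bound over the at most $n^6$ edges $e$ and triples $S$ delivers the claimed probability of at least $1-n^{-4}$.

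I would prove the moment bound for $Q(G_0)$ by expanding
\[
\Ex{Q(G_0)^t}\, =\, \sum_{(K_1,\ldots,K_t)} p^{|E(K_1)\cup\cdots\cup E(K_t)|}
\]
over ordered $t$-tuples of $k$-subsets of $[n]$, and organising the tuples by the isomorphism type of the multigraph of pairwise intersections. The "generic" type, with $|K_i\cap K_j|\le 1$ for all $i\ne j$, contributes at most $\binom{n}{k}^tp^{t\binom{k}{2}}=\Ex{Q(G_0)}^t$. Any other type has some pair with $|K_i\cap K_j|=s\ge 2$, and the relative factor versus the generic type is at most $n^{-s}p^{-\binom{s}{2}}$. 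Writing $\alpha:=\log(1/p)/\log n$, which satisfies $\alpha=o(1)$ and $\alpha k\in[1,2+o(1)]$ for $(p,n,k)\in\Lambda_{\gamma_-,p_+}$, the concave function $s\mapsto s-\alpha\binom{s}{2}$ equals $2-o(1)$ at $s=2$ and at least $\gamma$ at $s=k$, and therefore is at least $\min\{2,\gamma\}-o(1)=2-o(1)$ throughout $[2,k]$ (using $\gamma\ge\gamma_->2$). Hence $n^{-s}p^{-\binom{s}{2}}\le n^{-2+o(1)}$ uniformly for $s\in[2,k]$, and since the number of non-generic types depends only on $t$, summing yields $\Ex{Q(G_0)^t}\le C_t\,\Ex{Q(G_0)}^t$.

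The same expansion, with each $K_i$ constrained to contain the fixed edge $e$ or triple $S$, gives $\Ex{Y_e^t}\le C_t\Ex{Y_e}^t$ and, when $\Ex{Y_S}\ge 1$, $\Ex{Y_S^t}\le C_t\Ex{Y_S}^t$. In the remaining case $\Ex{Y_S}<1$, tuples with repeated cliques dominate: the extreme term where all $K_i$ coincide contributes $\Ex{Y_S}$, and grouping all tuples by the number of distinct cliques among $K_1,\ldots,K_t$ yields $\Ex{Y_S^t}\le C_t\,\Ex{Y_S}$. Combined with $\sum_S\Ex{Y_S}\le\binom{n}{3}$, the union bound gives $\sum_S\pr{Y_S\ge n^{\eps}}\le C_tn^{3-\eps t}\le n^{-4}$ for $t\ge 10/\eps$.

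The main obstacle will be the combinatorial bookkeeping for non-generic intersection types, in particular multi-way intersections in which three or more of the $K_i$ share extra vertices; these must also be controlled by the same vertex-versus-edge trade-off. The critical inputs are $p=n^{o(1)}$, $k\ge\log_{1/p}n$, and $\gamma\ge\gamma_->2$, which together force $n^{-s}p^{-\binom{s}{2}}\le n^{-2+o(1)}$ uniformly over $s\in[2,k]$; since there are only finitely many (depending only on $t$) non-generic types, their total contribution is negligible compared to the generic one, and the moment bound together with Markov and a union bound completes the proof of Proposition~\ref{prop:rough}.
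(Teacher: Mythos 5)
Your overall strategy (bound a fixed high moment, apply Markov, union bound over the $O(n^3)$ choices of $e$ and $S$) is exactly the paper's, and your reduction to the moment bound $\Ex{X^t}\le C_t\,\mu_X^t$ with $C_t=n^{o(1)}$ is the right target. However, the two quantitative claims you use to establish that bound do not hold as stated, and the second one is precisely where the real difficulty of the proposition lies. First, the relative factor of a pair with $|K_i\cap K_j|=s$ against the generic case is not $n^{-s}p^{-\binom{s}{2}}$ but
\[
\binom{k}{s}\binom{n-k}{k-s}p^{-\binom{s}{2}}\Big/\binom{n}{k}\;\approx\;\frac{\binom{k}{s}(k)_s}{s!}\,n^{-s}p^{-\binom{s}{2}},
\]
and the omitted prefactor is $e^{\Theta(k\log k)}=n^{\Theta(\log\log n)}$ for $s=\Theta(k)$, i.e.\ super-polynomial. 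So concavity of $s\mapsto s-\alpha\binom{s}{2}$ alone does not give a uniform $n^{-2+o(1)}$ bound; you need the unimodality of the full sequence (the ratio of consecutive terms is essentially increasing in $s$, so the maximum over $s\in\{2,\dots,k\}$ is attained near an endpoint, where it equals $n^{-2+o(1)}$ and $n^{-\gamma}$ respectively). This is repairable and is exactly what the paper's Claim 3.4 and Lemma B.1 do.

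The more serious gap is the step ``since the number of non-generic types depends only on $t$, summing yields $\Ex{Q^t}\le C_t\Ex{Q}^t$.'' The number of \emph{types} is bounded, but the number of \emph{realizations} of a type is not controlled by a single pair's relative factor once three or more cliques interact, and the dangerous regime is a later clique $K_i$ meeting $\bigcup_{j<i}K_j$ in $k-O(1)$ vertices: the naive count of ways to choose which reused vertices it takes from the $O(kt)$ vertices already present is $\binom{kt}{k-O(1)}=e^{\Theta(k\log k)}=n^{\omega(1)}$, which swamps the available saving of $n^{-(\gamma-2)+o(1)}$. You flag ``multi-way intersections'' as the main obstacle but give no mechanism to handle them; the paper's resolution is the ``good/simple sequence'' device (order the cliques so intersections are monotone, split steps into small/middling/large, and show that in a simple sequence a large step must coincide with a \emph{single} earlier clique outside $O(\ell D)$ vertices, so only $(k\ell)^{O(\ell D)}=n^{o(1)}$ choices remain), together with choosing $D$ so that the middling saving $n^{-D/16}$ beats the residual $(2e\ell)^{k\ell}=n^{O(1)}$ factor. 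Without an argument of this kind your moment bound is not established, so the proposal as written does not yet prove the proposition.
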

	
	Observe that item (1) in Proposition~\ref{prop:refined} follows from Chernoff's inequality and item (4) follows from 
	Proposition~\ref{prop:rough}. Items (2) and (3) are proven in Section~\ref{sec:Iprecise}.
	
	Proposition~\ref{prop:rough} follows easily from our next lemma (Lemma~\ref{lem:moments}), where we bound the moments
	of our main random variables. 
	Rather than prove the moment bounds separately for each of $Q(G_0)$, $Y_{e}(G_0)$ and $Y_{S}(G_0)$, we shall extend the use of the notation $Y_S$.
	For a graph $G$ and a set $S\subseteq V(G)$, we define $Y_S(G)$ to be the number of $k$-cliques contained in $G\cup \binom{S}{2}$ and containing $S$.
	Note that this includes $Q(G_0)$ and $Y_{e}(G_0)$, since these correspond to sets $S$ of size $0$ and $2$, respectively.
	Although we only need to bound the moments of $Y_{S}(G_0)$ for sets $S$ of size at at most $3$, we can show bounds on the moments of $Y_S(G_0)$ for every bounded set $S$.
	
	\begin{lemma}\label{lem:moments}
		Let $p_{+}\in (0,1)$ and $\ell,s\in \mathbb{N}_{\ge 0}$ be constants and $\gamma_->2$. Then, there exists $C>0$ such that the following holds. If $S \se [n]$ is a set of size $s$ and $(p,n,k)\in \Lambda_{\gamma_-,p_+}$, then
		\[
		\Ex{Y_{S}(G_0)^{\ell}}\, \le  \, k^{C}\max\{1,\Ex{Y_S(G_0)}^\ell\}
		\]
  for all $n$ sufficiently large.
	\end{lemma}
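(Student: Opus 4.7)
The plan is to expand $\Ex{Y_S(G_0)^\ell}$ as a sum over ordered $\ell$-tuples $(K_1,\ldots,K_\ell)$ of $k$-cliques each containing $S$, and to group the terms according to the intersection profile of the tuple. For each non-empty $T\subseteq [\ell]$ set
\[
a_T\, :=\, \Big|\bigcap_{i\in T}V(K_i)\, \setminus\, \bigcup_{j\notin T}V(K_j)\Big|\, ,
\]
so that $\sum_{T\ni i}a_T=k$ for every $i\in [\ell]$ and $a_{[\ell]}\ge s$. The number of such profiles is at most $(k+1)^{2^\ell}=k^{O_\ell(1)}$. Given a profile $(a_T)$, the number of $\ell$-tuples realising it is a multinomial coefficient in the $a_T$'s (bounded above by $\binom{n-s}{k-s}^\ell$ up to a disjointness correction that is $1+o(1)$ since $\ell(k-s)=o(n)$), and the probability that the union lies in $G_0\cup\binom{S}{2}$ equals $p^{e(a)-\binom{s}{2}}$, where $e(a)$ denotes the number of edges in $K_1\cup\cdots\cup K_\ell$ and is determined by $(a_T)$ via inclusion--exclusion.

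The core step is to show that each profile contributes at most $k^{O(1)}\max\{1,\Ex{Y_S(G_0)}^\ell\}$. For the \emph{main profile} $a_{[\ell]}=s$, $a_{\{i\}}=k-s$ for each $i$, and $a_T=0$ otherwise (cliques pairwise sharing only $S$), the contribution is at most $\Ex{Y_S(G_0)}^\ell$ directly. For any other profile I compare with the main one by tracking how the ratio of contributions changes as one moves vertices into a deeper bucket $T$ with $|T|\ge 2$: each such single-vertex merge saves at most $O(k)$ edges (a gain of $p^{-O(k)}$) at the cost of one vertex (a loss of $n^{-1}$), plus a multinomial reshuffling factor of $k^{O(1)}$. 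The crucial input is $k\le k_0\le (2+o(1))\log_{1/p}n$, equivalently $p^{-k}\le n^{2+o(1)}$, which guarantees that for non-coinciding profiles the net ratio is bounded by $k^{O(1)}$ times a negative power of $n$. Hence profiles in which no two cliques coincide contribute at most $k^{O(1)}\Ex{Y_S(G_0)}^\ell$. Profiles in which some cliques entirely coincide factor out the coincidences and reduce to $k^{O(1)}\Ex{Y_S(G_0)^{\ell'}}$ for some $\ell'<\ell$; by induction on $\ell$ this is at most $k^{O(1)}\max\{1,\Ex{Y_S(G_0)}^{\ell'}\}\le k^{O(1)}\max\{1,\Ex{Y_S(G_0)}^{\ell}\}$, where the final inequality holds in both regimes $\Ex{Y_S(G_0)}\ge 1$ and $\Ex{Y_S(G_0)}<1$.

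Summing over the at most $k^{O(1)}$ profiles completes the proof. The main technical obstacle is the uniform verification of the edge-versus-vertex trade-off across all non-main profiles, especially when several buckets $T$ with $|T|\ge 2$ simultaneously have $a_T>0$. I plan to handle this by reaching any given profile from the main one through a sequence of elementary single-vertex merges of the form $a_{\{i\}}\!-\!1,\, a_{\{j\}}\!-\!1,\, a_{\{i,j\}}\!+\!1$ (and their higher-order analogues), applying the pairwise trade-off at each step, and multiplying the resulting factors via a telescoping argument to obtain the claimed bound.
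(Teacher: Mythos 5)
Your overall strategy (expand the $\ell$-th moment, classify tuples of cliques by their vertex-intersection structure, and compare every class against the ``disjoint outside $S$'' class) is the same as the paper's, and your counting of profiles and of tuples realising a given profile is sound. The gap is in the key analytic step: the claim that each elementary single-vertex merge changes a profile's contribution by at most $k^{O(1)}$ times a \emph{negative} power of $n$. Merging one more vertex of $K_j$ into $K_i$ when $|V(K_i)\cap V(K_j)|=i$ costs a factor $n^{-1}$ in the vertex count but gains a factor $p^{-i}$ in the probability, since the newly identified vertex is joined to all $i$ previously common vertices in both cliques. The net per-merge ratio is therefore of order
\[
\frac{k-i}{p^{i}\,n}\, ,
\]
which is $\ll 1$ only while $i\lesssim \log_{1/p}n\approx k/2$. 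Once $i$ is close to $k$ we have $p^{-i}=n^{2+o(1)}$, so the ratio is $n^{1+o(1)}$, a large \emph{positive} power of $n$; your uniform bound ``$p^{-O(k)}\cdot n^{-1}$ is a negative power of $n$'' is false in this regime.

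Consequently the contribution along your chain of merges is not monotone decreasing but bathtub-shaped: it decreases for shallow overlaps and increases again for deep overlaps, and the telescoping argument breaks exactly on the near-coinciding profiles (pairwise intersection $k-O(1)$). These are not covered by your separate reduction for exactly coinciding cliques, yet they contribute at the level of $\max\{1,\Ex{Y_S(G_0)}\}$ times combinatorial factors --- which is the reason the $\max\{1,\cdot\}$ appears in the statement at all. To repair the argument you must replace ``each merge loses a power of $n$'' by an endpoint-maximum (unimodality) statement: with $\xi_i=\frac{n^{k-i}}{(k-i)!}p^{\binom{k}{2}-\binom{i}{2}}$ one has $\xi_b\le\max\{\xi_a,\xi_c\}$ for $a\le b\le c$ in the relevant range, the middle values being negligibly small, and one must separately control the number of tuples realising a deep-overlap profile. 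This is precisely what the paper does with its small/middling/large classification of steps and the notion of simple sequences (Lemma~\ref{lem:onestep} together with Lemma~\ref{lemma:xis}).
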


	We are now ready to prove Proposition~\ref{prop:rough}.

	\begin{proof}[Proof of Proposition~\ref{prop:rough} assuming Lemma~\ref{lem:moments}]
		Let $\ell=\lceil 8\eps^{-1}\rceil$ and let $S$ be a set of size at most three.  By Lemma~\ref{lem:moments} and Markov's inequality, we have 
		\begin{align*}
			\pr{Y_S(G_0)\, >\, n^{\eps}\max\{1,\Ex{Y_{S}(G_0)}}\, &=\, \pr{Y_{S}(G_0)^{\ell}\, >\, n^{\eps\ell}\max\{1,\Ex{Y_{S}(G_0)}^{\ell}\}
			}
			\\
			&\le\, \Ex{Y_{S}(G_0)^{\ell}}n^{-\eps\ell}\max\{1,\Ex{Y_{S}(G_0)}^{\ell}\}^{-1}\\
			&\le\,  \frac{k^{C}}{n^{8}}\\
   &\le\, n^{-7}\, .
		\end{align*}
		for all sufficiently large $n$.
        Above, we used that $k \le k_0 = O(\log n)$, as $p$ is uniformly bounded away from 1.
  
  %As $\gamma(n,k)\ge \gamma_{-} > 2$, we have $\Ex{Q(G_0)} \ge \Ex{Y_e(G_0)}=n^{\gamma-2+o(1)} > 1$.
		By a union bound over at most $n^3$ choices of the set $S$, with $|S|\le 3$, the resulting probability is at most $n^{-4}$, for all sufficiently large $n$.
	\end{proof}

	We now turn to the task of proving Lemma~\ref{lem:moments}. For the rest of this section, we fix $\gamma_{-}>2$, $p_{+}\in (0,1)$ and $(p,\gamma,k) \in \Lambda_{\gamma_{-},p_{+}}$. Let $S$ be a fixed set of size $s$ and define $G_S=G_0 \cup\binom{S}{2}$.  Let $\K_S$ denote the set of all $k$-cliques in the complete graph $K_n$ which contain the set $S$.  We note that
	\[
	Y_{S}(G_0)\, =\, \sum_{K\in \K_S} 1_{K\subseteq G_S}\, 
	\]
	and
	\[
	Y_{S}(G_0)^{\ell}\, =\, \sum_{(K^1,\dots ,K^{\ell})\in \K_S^{\ell}} 1_{K^1\cup \dots\cup K^{\ell}\subseteq G_S},
	\]
	where $\K_S^{\ell}$ denotes the set of $\ell$-tuples of $\K_S$.
	Taking expectation, we see that the $\ell$th moment is given by 
	\[
	\Ex{Y_S(G_0)^{\ell}}\, =\, \sum_{(K^1,\dots ,K^{\ell})\in \K_S^{\ell}} \pr{K^1\cup \dots\cup K^{\ell}\subseteq G_S}\, .
	\]

	Even though the probability is independent of the ordering of the sequence of cliques $K^1,\dots ,K^{\ell}$, it is useful to just consider certain orders, as our proof will have a recursive argument which depends on properties of the sequence. 
	We say that a sequence of cliques $(K^1,\dots ,K^{\ell})$ is \emph{good} if it is ordered in such a way that the next clique always maximises the intersection with the union of the existing cliques in the sequence. That is, $(K^1,\dots ,K^{\ell})$ is good if for every $r \in [\ell-1]$ we have
	\[\left|V(K^{r+1})\cap \bigcup_{j\le r}V(K^j)\right|\, =\, \max_{i>r}\left|V(K^{i})\cap \bigcup_{j\le r}V(K^j)\right|.
	\]
	Every sequence of cliques may be reordered to be a good sequence: simply consider an algorithm that uses this criterion to select the next clique.

	Let $\K_S^{*,\ell}$ be the collection of good sequences in $\K_S^{\ell}$.
	As every good sequence corresponds to at most $\ell!$ sequences, we have
	\eq{must}
	\Ex{Y_S(G_0)^{\ell}}\, \le \, \ell! \sum_{(K^1,\dots ,K^{\ell})\in \K_S^{*,\ell}} \pr{K^1\cup \dots\cup K^{\ell}\subseteq G_S}.
	\eqe
	It is useful to classify a good sequence of $k$-cliques according to how ``clustered'' the cliques
	are. Before doing so, it is convenient to define $D_0$ to be the smallest integer such that $2^{32k\ell^2}\le n^{D_0}$ and $D$ to be the first integer which is at least $4\ell D_0+(1-p)^{-1}+s$.
	We emphasize that $D=\Theta(1)$. Moreover,
	as $(p,n,k) \in \Lambda_{\gamma_{-},p_{+}}$ and $\gamma_{-}>2$, we have $k < 2.1\log_{1/p_{+}} n$, and hence $D_0$ is at most $ 2.1 \cdot 32\ell^2 \log^{-1}(1/p_{+})$.

	It will be helpful to classify good sequences based on the intersection of the last clique with the vertices already present in previous cliques.
	
	\begin{defn}
		Let $(K^1,\dots ,K^{r+1})$ be a good sequence, and let
		\[
		I_{r+1}\, :=\, \Big|V(K^{r+1})\cap \bigcup_{j\le r}V(K^j)\Big|\, .
		\]
		We call $(K^1,\dots ,K^{r+1})$ \emph{small} if $I_{r+1}\le D$, \emph{middling} if $D<I_{r+1}<k-D$ and \emph{large} if $I_{r+1}\ge k-D$.
	\end{defn}
	
	Given a good sequence $(K^1,\dots ,K^{r+1})$, one can view its construction as proceeding through steps $(K^1), (K^1,K^2),\dots ,(K^1,\dots ,K^{r+1})$. It will turn out that if any of these steps is ``middling'' then we may win a large factor when bounding the expected count. 
	
	\begin{defn}
		We say that a good sequence $(K^1,\dots ,K^{r+1})$ is \emph{simple} if at all steps $(K^1,\dots ,K^{i})$ is small or large, i.e., there are no ``middling'' steps.
	\end{defn}
	
	We now state the lemma which bounds the contributions of steps of different types.
	
		\begin{restatable}{lemma}{onestep}\label{lem:onestep}
		Let $0\le r\le \ell$, let $(K^1,\dots ,K^r)$ be a good sequence and set $U = K^1 \cup \cdots \cup K^r$.
		Let $\C_g, \C_s, \C_m$, and $\C_l$ be the sets of $k$-cliques $K^{r+1}$ such that the sequence $(K^1,\dots ,K^r,K^{r+1})$ is categorized as good, small, middling or large, respectively.
		Then,
		\vspace*{2mm}
		\begin{enumerate}
			\item [$(i)$] $\sum_{K^{r+1} \in \C_g} \mathbb{P}\left( K^{r+1} \, \subseteq\, G_S\, \Big|\, U \, \subseteq\, G_S\, \right)\, \le\, (2e\ell)^k\max\{\Ex{Y_S(G_0)},1\}$;
			\vspace*{2mm}
			\item[$(ii)$] $\sum_{K^{r+1} \in \C_s} \mathbb{P}\left(K^{r+1} \, \subseteq\, G_S\, \Big|\, U \, \subseteq\, G_S\, \right) \, \le\, 2\Ex{Y_S(G_0)}$;
			\vspace*{2mm}
			\item[$(iii)$] $\sum_{K^{r+1} \in \C_m} \mathbb{P}\left(K^{r+1} \, \subseteq\, G_S\, \Big|\, U \, \subseteq\, G_S\, \right) \, \le\, n^{-D/16}\max\{\Ex{Y_S(G_0)},1\}\, .$
		\end{enumerate}
		Furthermore, if $(K^1,\dots ,K^r)$ is simple, then
		\[
		\sum_{K^{r+1}\in \C_l}\mathbb{P}\left(K^{r+1} \, \subseteq\, G_S\, \Big|\, U \, \subseteq\, G_S\, \right)\, \le\, 2(k\ell)^{4\ell D}\max\{\Ex{Y_S(G_0)},1\}\, .
		\]
	\end{restatable}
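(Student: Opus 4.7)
\emph{Proof plan.} For any candidate $K^{r+1}\in \K_S$, independence of edges in $G_0$ gives
\[
\mathbb{P}\!\left(K^{r+1}\subseteq G_S \,\big|\, U \subseteq G_S\right)\, =\, p^{\binom{k}{2}-|E(K^{r+1})\cap E(U)|}\, \le\, p^{\binom{k}{2} - \binom{j}{2}},
\]
where $j := |V(K^{r+1})\cap V(U)|$ and the inequality uses $|E(K^{r+1})\cap E(U)|\le \binom{j}{2}$. The number of $K^{r+1}\in \K_S$ with this intersection size is at most $\binom{|V(U)|-s}{j-s}\binom{n-|V(U)|}{k-j}$, so every bound in the lemma reduces to estimating the sum of
\[
a_j\, :=\, \binom{|V(U)|-s}{j-s}\binom{n-|V(U)|}{k-j}\, p^{\binom{k}{2}-\binom{j}{2}}
\]
over a suitable range of $j$.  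A direct calculation gives $a_j/a_{j+1} = \tfrac{(j+1-s)(n-|V(U)|-k+j+1)}{(|V(U)|-j)(k-j)}\,p^{j}$; combined with $(p,n,k)\in \Lambda_{\gamma_-,p_+}$, $|V(U)|\le \ell k$ and $k\le 2.1\log_{1/p}n$, this ratio is very large for small $j$ and very small for $j$ near $k$, so $a_j$ is U-shaped with maximum at $j=s$ or $j=k$.  Here $a_s\le \Ex{Y_S(G_0)}$ and $a_k\le \binom{\ell k}{k}\le (e\ell)^k$.

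For item (i), since $\C_g\subseteq \K_S$, it suffices to bound $\sum_{j=s}^{k} a_j \le (k+1)\max_j a_j$, which is at most $(k+1)(e\ell)^k\max\{\Ex{Y_S(G_0)},1\}\le (2e\ell)^k\max\{\Ex{Y_S(G_0)},1\}$ via $k+1\le 2^k$.  For item (ii), $j\le D$, iterating the ratio formula shows $a_j\le a_s\cdot (n^{-1+o(1)})^{j-s}$, so $\sum_{j\le D} a_j\le 2 a_s \le 2\Ex{Y_S(G_0)}$.

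For item (iii), the middling range $D<j<k-D$, I would bound each $a_j$ individually.  Using $\binom{|V(U)|-s}{j-s}\le (e\ell k)^{j-s}$ together with $\binom{n-|V(U)|}{k-j}/\binom{n-s}{k-s}\le (1+o(1))(k/n)^{j-s}$ and $k\log(1/p)\le 2.1\log n$, the ratio $a_j/\max\{\Ex{Y_S(G_0)},1\}$ can be shown to be at most $n^{-D/8}$ uniformly on the middling range.  Summing over at most $k\le n^{o(1)}$ choices of $j$ then yields the required $n^{-D/16}\max\{\Ex{Y_S(G_0)},1\}$ bound.

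The main obstacle is item (iv), the sum over large extensions $j\ge k-D$ under the assumption that $(K^1,\dots,K^r)$ is simple; here the naive endpoint bound $a_k\le (e\ell)^k$ is exponential in $k$, while the target $(k\ell)^{4\ell D}$ is only polynomial.  My plan is to exploit simplicity through the partition $V(U)=\bigcup_i V_i$ (disjoint), where $V_i := V(K^i)\setminus V(U_{i-1})$: for each $i$, either $|V_i|\le D$ (if step $i$ was large) or $|V_i|\ge k-D$ (if step $i$ was small).  For any large $K^{r+1}$ whose vertices in $V(U)$ spread substantially across more than one large-sized part, the edges of $K^{r+1}$ between distinct parts are absent from $E(U)$, so the true conditional probability carries an additional factor $p^{\Omega(kD)}\le n^{-\Omega(D)}$ that annihilates the combinatorial count.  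Up to such negligible contributions, $K^{r+1}$ must concentrate on a single large-sized part $V_{i^*}$, agreeing with $K^{i^*}$ up to $O(\ell D)$ vertex substitutions; enumerating these configurations---at most $\ell$ choices of $i^*$ and $(k\ell)^{O(\ell D)}$ substitutions (the $n$-dependence for substitutions outside $V(U)$ being absorbed by the corresponding conditional probability)---yields the required bound $2(k\ell)^{4\ell D}\max\{\Ex{Y_S(G_0)},1\}$.  The most delicate step will be the precise bookkeeping of probability defects against combinatorial counts for configurations not concentrated in a single cluster.
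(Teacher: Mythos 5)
Your treatment of items (i)--(iii) follows essentially the same route as the paper: classify the extensions $K^{r+1}$ by the intersection size $j=|V(K^{r+1})\cap V(U)|$, bound the contribution of each class by $\binom{|V(U)|-s}{j-s}\binom{n}{k-j}p^{\binom{k}{2}-\binom{j}{2}}$, and control the resulting sequence via its consecutive ratios, with the endpoints $j=s$ and $j=k$ dominating. One repairable caveat: your claim that the sequence is U-shaped with maximum at an endpoint cannot be established over the whole range $s\le j\le k$ by the ratio-of-ratios computation alone, since the factor $\frac{k-j}{k-j+1}$ is bounded away from $1$ when $j$ is within $O((1-p)^{-1})$ of $k$ and $p$ may be close to $1$; the paper therefore splits into the range $j\le k-D$ (monotone ratios) and $j\ge 7k/8$ (a separate direct comparison with $\max\{\xi_s,\xi_k\}$), and your items (i) and (iii) need the same two-regime care.

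The genuine gap is in the ``furthermore'' part. Your plan rests on the claim that if $K^{r+1}$ spreads substantially over two large-sized parts $V_{i_1},V_{i_2}$, then the edges of $K^{r+1}$ between the parts are absent from $E(U)$, yielding a penalty $p^{\Omega(kD)}$. This claim is not justified and is false in general: a later step $K^t$ that is \emph{large} (hence permitted by simplicity) may contain many vertices from each of two earlier parts, and then $E(K^t)\subseteq E(U)$ supplies exactly the cross-part edges you need to be missing. Any attempt to bound how many such cross edges can be present forces you back to the goodness condition --- and once goodness is invoked, the probabilistic detour becomes unnecessary. The paper's argument here is purely combinatorial: letting $j^*$ be the largest index with $(K^1,\dots,K^{j^*+1})$ small, goodness at step $j^*$ gives $|V(K^{r+1})\cap\bigcup_{j\le j^*}V(K^j)|\le |V(K^{j^*+1})\cap\bigcup_{j\le j^*}V(K^j)|\le D$, while simplicity implies that the steps after $j^*+1$ add at most $D\ell$ further vertices to $V(U)$; hence every $K^{r+1}\in\C_l$ automatically satisfies $|V(K^{r+1})\cap V(K^{j^*+1})|\ge k-D(\ell+2)$, so the spreading configurations you propose to kill probabilistically simply do not occur in $\C_l$. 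As written, your argument for the large case does not go through; you should replace the probability-defect bookkeeping by this use of goodness (after which your enumeration of the $(k\ell)^{O(\ell D)}$ substitutions matches the paper's count).
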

 
	As the proof of Lemma~\ref{lem:onestep} requires few interesting ideas and many technical steps, it may be found in Appendix~\ref{ap:one-step}.
	We may now deduce Lemma~\ref{lem:moments} assuming Lemma~\ref{lem:onestep}.  The proof relies on applying the one-step bounds of Lemma~\ref{lem:onestep} repeatedly.
	
	\begin{proof}[Proof of Lemma~\ref{lem:moments}]
        Since the case $\ell = 0$ is trivial, we may assume that $\ell \ge 1$.
		Let $r < \ell$ be a non-negative integer.
		Observe that the collection of good sequences of size $r$ can be split into two parts: the good simple sequences, which we shall denote by $\K_{\text{simple}}^{r}$ and the good non-simple sequences.
		By applying Lemma~\ref{lem:onestep}, we obtain that
		\vspace*{0.2cm}
		
		\noindent\resizebox{16.6cm}{0.8cm}{
			$\displaystyle\sum_{(K^1,\dots ,K^{r+1})\in \K_{\text{simple}}^{r+1}}  \pr{\bigcup_{i=1}^{r+1} K^i \subseteq G_S} \le 
			(k\ell)^{8\ell D}\max\{\Ex{Y_S(G_0)},1\} \sum_{(K^1,\dots ,K^{r})\in \K_{\text{simple}}^{r}}  \pr{\bigcup_{i=1}^{r} K^i\subseteq G_S} $}
		\vspace*{0.2cm}
		
		\noindent for all $r < \ell$. By applying the last inequality repeatedly, we obtain
		\begin{align}\label{eq:sum-simple}
			\sum_{(K^1,\dots ,K^{\ell})\in \K_{\text{simple}}^{\ell}} & \pr{\bigcup_{i=1}^{\ell} K^i \subseteq G_S} \le 
			(k\ell)^{8\ell^2 D}\big (\max\{\Ex{Y_S(G_0)},1\} \big)^{\ell}.
		\end{align}
		
		Recall that $\K_S^{*,r}$ denotes the collection of good sequences $(K^1,\ldots,K^r)$ of $k$-cliques containing $S$.
		To deal with the non-simple sequences, we first observe from~Lemma~\ref{lem:onestep} that for every pair of indices $0 \le r \le r' \le \ell$, we have
		%\resizebox{horizontal_length}{vertical_length}{custom_equation}
		\vspace*{0.2cm}
		
		\noindent\resizebox{16.5cm}{0.8cm}{
			$\displaystyle\sum\limits_{(K^1,\dots ,K^{r'})\in \K_S^{*,r'}} \pr{\bigcup_{i=1}^{r'} K^i \subseteq G_S} \le 
			\big((2e \ell)^{k}\max\{\Ex{Y_S(G_0)},1\}\big)^{r'-r} \sum_{(K^1,\dots ,K^{r})\in \K_{S}^{*,r}} \pr{\bigcup_{i=1}^{r} K^i\subseteq G_S}.$}
		\vspace*{0.2cm}
		
		\noindent For simplicity, set $\C_{NS} \coloneqq \K_S^{*,\ell} \setminus \K^{\ell}_{\text{simple}}$. 
        Note that $\C_{NS}$ comprises those sequences $(K^1,\ldots,K^{\ell})$ in $\K_S^{*,\ell}$ for which there exists an $r \in [\ell]$ such that the size of $K^{r} \cap (K^1\cup \cdots \cup K^{r-1})$ belongs to $\{D+1,\ldots,k-D-1\}$.
		Again by Lemma~\ref{lem:onestep} combined with our previous displayed inequality, it follows that
		\begin{align}\label{eq:sum-complex}
			\sum\limits_{(K^1,\dots ,K^{\ell})\in \C_{NS}} \pr{\bigcup_{i=1}^{\ell} K^i \subseteq G_S} \le 
			\big((2e \ell)^{k}\max\{\Ex{Y_S(G_0)},1\}\big)^{\ell} \cdot n^{-D/16}.
		\end{align}
		As $D$ was chosen so that $2^{32k\ell^2}\le n^{D}$, it follows from~\eqref{eq:sum-simple} combined with~\eqref{eq:sum-complex} that 
		\begin{align*}
			\sum\limits_{(K^1,\dots ,K^{\ell})\in \K_{S}^{*,\ell}} \pr{\bigcup_{i=1}^{\ell} K^i \subseteq G_S} \le 
			(k \ell)^{9\ell^2D}  \cdot \big(\max\{\Ex{Y_S(G_0)},1\}\big)^{\ell}.
		\end{align*}
		This together with~\eqr{must} finishes our proof.
	\end{proof}

	\subsection{More precise bounds for $Q(G_0)$ and $Y_e(G_0)$}\label{sec:Iprecise}
	In this section, we show that $Q(G_0)$ and $Y_e(G_0)$ are highly concentrated around $\tq(0)$ and $\ty(0)$, respectively.
	Let $\gamma_- >2$ and $p_+\in (0,1)$, and let $(p,n,k)\in \Lambda_{\gamma_-,p_+}$.
	First, recall that
	\[\tq(0)= \Ex{Q(G_0)} \qquad \text{and}\qquad \ty(0)= \binom{k}{2}\dfrac{\tq(0)}{e(G_0)}.\]
	As $e(G_0) = p\binom{n}{2} \pm (pn^2)^{3/4}$ with high probability and $\Ex{Y_e(G_0)} = \binom{n-2}{k-2}p^{\binom{k}{2}-1}$,
	we have
	\[\ty(0) =  \left(1 \pm n^{-1/4} \right)  \Ex{Y_e(G_0)}\]
	with high probability.
	Therefore, it suffices to show that 
	\begin{align}\label{eq:Q0andY0}
		Q(G_0) = (1\pm n^{-2\delta})\Ex{Q(G_0)} \qquad \text{and} \qquad Y_e(G_0) = (1\pm n^{-2\delta})\Ex{Y_e(G_0)}
	\end{align}
	for all $e \in K_n$ with high probability.
	
	We shall prove the upper tail part of~\eqref{eq:Q0andY0} using the deletion method of Janson and Ruci\'nski~\cite{JR1,JR2} and the lower tail part using Janson's inequality~\cite{Jan90}.  Note that the result for $Q(G_0)$ follows from the result for $Y_e(G_0)$ for all $e\in E(K_n)$, as $Q(G_0)=\binom{k}{2}^{-1}\sum_{e\in E(G_0)}Y_e(G_0)$.
	Thus, it suffices to prove the result for the $Y_e(G_m)$.
	
	We start with the upper tail, which we prove using the deletion method (Theorem~\ref{thm:deletion-method}).  
	Let $\mathcal{A}_e$ be the collection of $k$-cliques in $K_n$ containing the edge $e$. 
	We denote by $X_K^e$ the indicator random variable of the event that $K\subseteq G_0\cup \{e\}$, so that $Y_e(G_0)=\sum_{K \in \mathcal{A}_e}X_K^e$.  Now, let us make a few observations:
	\begin{enumerate}
		\item[$(i)$] Consider the dependency graph with vertex set $\mathcal{A}_e$ in which $K$ and $K'$ are adjacent if $|K\cap K'|\ge 3$.
		Observe that $X_K^e$ is independent of the family $\{X_{K'}^e: |K \cap K'| \le 2\}$. 
		\item[$(ii)$] The random variables $\tilde{X}_{\alpha}$ defined in Theorem~\ref{thm:deletion-method} correspond in our context to
		\[
		\tilde{X}_{K}^e\, :=\, \sum_{\substack{K' \in \mathcal{A}_e: \\|K\cap K'|\ge 3}}X_K^e\, \le\, \sum_{S\subseteq K:|S|=3}Y_{S}(G_0).
		\]
		\item[$(iii)$] The analog of the random variable $X^{*}$ in our context,  $X^{*}_e = \max_{K \in \mathcal{A}_e} \tilde{X}_{K}^e$, is at most $\binom{k}{3}\max_{S:|S|=3}Y_S(G_0)$.  
		Let $S$ be an arbitrary set of size three. Note that
		\[\Ex{Y_S(G_0)} = \binom{n-3}{k-3}p^{\binom{k}{2}-3} = n^{\gamma-3+o(1)}.\]
		It follows from Proposition~\ref{prop:rough} that
		\begin{align*}
			X^{*}_e\le \binom{k}{3}n^{\delta}\max\{1,\Ex{Y_{S}(G_0)}\} = n^{\delta+o(1)+\max \{0,\gamma-3\}}
		\end{align*} except with probability at most $n^{-4}$ for all sufficiently large $n$.
		\item[$(iv)$] Note that
		\begin{align*}
			\Ex{Y_e(G_0)} = \binom{n-2}{k-2}p^{\binom{k}{2}-2} = n^{\gamma-2+o(1)}.
		\end{align*}
		As $6\delta < \min\{\gamma-2,1\}$ (recall the definition of $\delta$ in~\eqref{eq:delta}), it follows from $(iii)$ that $X^{*}_e \le \Ex{Y_e(G_0)}/n^{5\delta}$ except with probability at most $n^{-4}$ for all sufficiently large $n$.
	\end{enumerate}
	We now apply the deletion method (Theorem~\ref{thm:deletion-method}) with $\mu=\Ex{Y_e(G_0)}$, $t=n^{-2\delta}\Ex{Y_e(G_0)}$ and $r=n^{3\delta}/2$ to obtain
	\[
	\pr{Y_e(G_0)\ge \left(1+\dfrac{1}{n^{2\delta}}\right)\Ex{Y_e(G_0)}}\, \le\, \left(1+\dfrac{1}{2n^{2\delta}}\right)^{-n^{3\delta}/2}\, +\, n^{-4}\, \le\, n^{-3}\, ,
	\]
	for all sufficiently large $n$. Finally, applying an union bound over $\binom{n}{2}$ choices of $e \in K_n$, there is probability at most $n^{-1}$ that this event occurs for some $e \in E(K_n)$.
	
	We now prove the lower tail part of these results using Janson's inequality (Theorem~\ref{thm:janson}).  The context is the same as above, with $Y_e(G_0)=\sum_{K \in \mathcal{A}_e}X_K^e$, where $\mathcal{A}_e$ denotes the collection of $k$-cliques in $K_n$ containing the edge $e$.
	We observe that $X_K^e$ is exactly the indicator of an event that a particular set of $\binom{k}{2}-1$ edges are in $G_0$.
	As before, we also consider the dependence graph on the set of $k$-cliques in $\mathcal{A}_e$, where $K$ and $K'$ are adjancent if $|K \cap K'| \ge 3$.
	%When they are adjacent, we simply write $K\sim K'$.  
	In order to apply Janson's inequality, the first step is to calculate $\bar{\Delta}$, which in this case is
	\begin{align}\label{eq:delta-bar}
		\bar{\Delta}\, &=\, \sum_{\substack{(K,K')\in \mathcal{A}_e^2:\\|K \cap K'| \ge 3}} p^{e(K\cup K')} \nonumber\\
		& = \,  \binom{n}{k-2}p^{\binom{k}{2}-1} \sum \limits_{i=1}^{k-2}\binom{k-2}{i}\binom{n-k}{k-2-i} p^{\binom{k}{2}-\binom{i+2}{2}}.
	\end{align}
	Let us denote by $a_i$ the $i$-th term of the sum above.
	\begin{claim}\label{claim:boundonai}
		$a_i \le n^{o(1)}\max\{a_1,a_{k-2}\}$ for all $i \in \{1,\ldots,k-2\}$.
	\end{claim}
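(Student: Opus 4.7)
The plan is to show that the sequence $(a_i)_{i=1}^{k-2}$ is essentially \emph{U-shaped}---first decreasing and then increasing---so that its maximum is attained, up to a subpolynomial factor, at one of the endpoints $i=1$ or $i=k-2$.

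The first step is to compute the ratio
\[
\frac{a_{i+1}}{a_i} = \frac{(k-2-i)^2}{(i+1)(n-2k+3+i)} \cdot p^{-(i+2)}.
\]
The hypothesis $(p,n,k)\in\Lambda_{\gamma_-,p_+}$ together with the identity $\binom{k}{2}\log_n(1/p) = \log_n\binom{n}{k}-\gamma$ (and Stirling's estimate for $\log_n\binom{n}{k}$) gives $\log_n(1/p) = (2+o(1))/k$. Combined with $\log_n(n-2k+3+i) = 1+o(1)$ and $\log_n(k-2-i), \log_n(i+1) = o(1)$ (since $k = O(\log n)$), one obtains
\[
\log_n(a_{i+1}/a_i) = -1 + \frac{2(i+2)}{k} + o(1),
\]
uniformly for $i \in \{1,\ldots,k-3\}$. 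This quantity is negative for $i<k/2-2$ and positive for $i>k/2-2$ when $n$ is large, so $(a_i)$ is unimodal with minimum in the interior, and the maximum is attained at $i=1$ or $i=k-2$.

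For the quantitative bound $a_i \le n^{o(1)}\max\{a_1, a_{k-2}\}$, I would treat three ranges. When $i$ is within a bounded distance of $1$, direct evaluation using $\log_n a_1 = \gamma - 3 + o(1)$ and telescoping over constantly many steps gives $\log_n(a_i/a_1) = -(i-1) + o(1)$; symmetrically, near $i=k-2$ one uses $a_{k-2} = 1$ to deduce $\log_n a_i = -(k-2-i) + o(1)$; for $i$ in the intermediate range, the main term $-i + (i+1)(i+2)/k$ in $\log_n a_i$ is of order $-k/4$ near $i = k/2$, so $a_i$ is much smaller than either endpoint. The main technical obstacle is to ensure the cumulative $o(1)$ error from telescoping remains subpolynomial: naively, an $O(\log\log n/\log n)$ per-step error over $O(\log n)$ steps accumulates to $O(\log\log n)$, exceeding $o(1)$; this is bypassed by limiting the telescoping to a bounded number of steps near each endpoint and by using the $-\Theta(k)$ size of the main term to absorb accumulated error in the interior.
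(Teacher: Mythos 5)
Your overall strategy --- compute the consecutive ratios $a_{i+1}/a_i$, argue that the sequence is first decreasing and then increasing, and conclude that the maximum sits at an endpoint up to a factor $n^{o(1)}$ --- is the same as the paper's, but there is a genuine gap in the step $\log_n(1/p)=(2+o(1))/k$. From the identity $\binom{k}{2}\log_n(1/p)=\log_n\binom{n}{k}-\gamma$ and Stirling one only gets $\log_n(1/p)=\tfrac{2}{k}\bigl(1-\tfrac{\gamma}{k}+o(1)\bigr)$, and the hypotheses of $\Lambda_{\gamma_-,p_+}$ do \emph{not} force $\gamma=o(k)$: condition (ii) only requires $k\ge\log_{1/p}n$, and for $k=\lceil\log_{1/p}n\rceil$ one has $\gamma=(\tfrac12+o(1))k$ and $\log_n(1/p)=(1+o(1))/k$. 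Writing $c:=k\log_n(1/p)\in[1,\,2.1]$, your asymptotic becomes $\log_n(a_{i+1}/a_i)=-1+c(i+2)/k+o(1)$; when $c$ is close to $1$ the sign change moves out to $i\approx k-2$, there need not be an interior minimum, and your ``symmetric'' estimate $\log_n a_i=-(k-2-i)+o(1)$ near the right endpoint is simply false (there one gets $a_{k-3}=n^{-(c-1)+o(1)}a_{k-2}$, which can be $n^{o(1)}a_{k-2}$). The claim itself survives --- in that regime the sequence is essentially decreasing and every $a_i$ should be compared with $a_1$ --- but your three-range case analysis has to be redone with general $c$, and the boundary between ``compare with $a_1$'' and ``compare with $a_{k-2}$'' then depends on $c$. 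Your second concern, the accumulation of $O(\log k/\log n)$ per-step errors over up to $k$ steps, is real but your proposed fix does work once the cases are set up correctly: for each $i$ the accumulated error is $O(\min\{i,k-i\}\log k/\log n)$, which is dominated by the $-\Omega(\min\{i,k-i\})$ main term of whichever telescoping (from $a_1$ or from $a_{k-2}$) is appropriate.

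You should know that the paper sidesteps both difficulties with a cleaner device: it computes the ratio of consecutive ratios $(a_{i+1}/a_i)\cdot(a_i/a_{i-1})^{-1}$ \emph{exactly} and checks that it is at least $1$ for all $D\le i\le k-D$, where $D=(1-p_+^{1/4})^{-1}+1$ is a constant depending only on $p_+$. This yields exact unimodality of $(a_i)_{i=D}^{k-D}$, hence $a_i\le\max\{a_D,a_{k-D}\}$ with no error accumulation whatsoever, and only the two constant-length end ranges $\{1,\dots,D\}$ and $\{k-D,\dots,k-2\}$ require per-step ratio estimates, each step costing at most $n^{o(1)}$. That argument uses nothing beyond $p<p_+$ and $k\ge\log_{1/p}n$, so it is immune to the $\gamma=\Theta(k)$ issue above; I would recommend adopting it, or at least restricting your asymptotic computation to the two constant-length end ranges where no error can accumulate.
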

	
	\begin{proof}
		Observe that
		\begin{align}\label{eq:ratioais}
			\dfrac{a_{i+1}}{a_i} = \dfrac{(k-2-i)^2}{i+1} \cdot \dfrac{1}{n-2k+i+3} \cdot p^{-i-2},
		\end{align}
		and hence the ratio of two consecutive ratios, say $a_{i+1}/a_i$ over $a_i/a_{i-1}$ is
		\begin{align*}%\label{eq:ratioais-2}
			\dfrac{a_{i+1}}{a_i} \cdot \left(\dfrac{a_i}{a_{i-1}}\right)^{-1} = \left(1-\dfrac{1}{k-i-1}\right)^2 \cdot \left(1-\dfrac{1}{i+1}\right) \cdot \left(1-\dfrac{1}{n-2k+i+3}\right) \cdot p^{-1}.
		\end{align*}
		For simplicity, set $D:=(1-p_{+}^{1/4})^{-1}+1$. As $p < p_{+}$, for every $i \ge D$ we have that
		\begin{align}\label{eq:ratioais-3}
			\dfrac{a_{i+1}}{a_i} \cdot \left(\dfrac{a_i}{a_{i-1}}\right)^{-1} \ge \left(1-\dfrac{1}{k-i-1}\right)^2 \cdot p_{+}^{-1/2}
		\end{align}
		for all $n$ sufficiently large. 
		It follows from~\eqref{eq:ratioais-3} that the ratio $(a_{i+1}/a_i)/(a_i/a_{i-1})$ is at least 1 for all $i \in \{D,\ldots,k-D\}$.
		As $a_{D+1}/a_D = n^{-1+o(1)}<1$ (here we used~\eqref{eq:ratioais}, together with the facts that $D$ is constant, $p = n^{o(1)}$, and $k = O(\log n)$) and the sequence of ratios $(a_{i+1}/a_i)_{i=D}^{k-D}$ is increasing, there must exist an index $j$ such that $(a_i)_{i=D}^j$ is non-increasing and $(a_i)_{i=j}^{k-D}$ is  non-decreasing.
		It follows from this that
		\eq{midai}
		a_i\,\le \, \max\{a_D,a_{k-D}\} \qquad \text{for all} \, D\le i \le k-D.
		\eqe
		Now, it follows from~\eqref{eq:ratioais}  that $a_{i+1}/a_i = n^{-1+o(1)}$ for all $i \in \{1,\ldots,D\}$, and hence $a_i \le a_1$ for all $i \in \{1,\ldots,D\}$.
		Similarly, as $\log_{1/p}n \le k = O(\log n)$, it follows from~\eqref{eq:ratioais} that for all $i \in \{k-D,\ldots,k-3\}$ we have 
        \begin{align*}
            \dfrac{a_{i+1}}{a_i} \ge \dfrac{1}{i+1} \cdot \dfrac{1}{n} \cdot p^{-i-2} \ge \dfrac{1}{kn} \cdot  p^{-k+D-2} \ge k^{-1}p^{D-2}.
        \end{align*}
        Hence,
		$a_i \le n^{o(1)}a_{k-2}$ for all  $i \in \{k-D,\ldots,k-2\}$.
		This together with~\eqr{midai} proves our claim.
	\end{proof}
	For simplicity, set $\mu=\Ex{Y_e(G_0)}$. From~\eqref{eq:delta-bar} and Claim~\ref{claim:boundonai}, we obtain $\bar{\Delta} = n^{o(1)} \max \{\mu, \mu^2n^{-1} \}$.
	We may now apply Janson's inequality (Theorem~\ref{thm:janson}) with $t=n^{-2\delta}\mu$ to obtain
	\[
	\pr{Y_e(G_0)\le \left(1-\dfrac{1}{n^{2\delta}}\right)\Ex{Y_e(G_0)}}\, \le\, \exp\left(\frac{-n^{-4\delta}\mu^2}{n^{o(1)} \max \{\mu, \mu^2n^{-1} \}}\right)\, .
	\]
	If the first term in the denominator is the maximum, then we obtain a bound of the form $\exp(-n^{\gamma-2-4\delta-o(1)})\le \exp(-n^{\delta})\le n^{-3}$ for all sufficiently large $n$ (recall from~\eqref{eq:delta} that $\delta = \min\{\gamma-2,1\}/10$).  If the second term is the maximum, then we obtain $\exp(-n^{1-4\delta+o(1)})\le n^{-3}$ for all sufficiently large $n$.  In either case, applying an union bound over $\binom{n}{2}$ choices of $e \in K_n$, there is probability at most $n^{-1}$ that this event occurs for some $e \in E(K_n)$.

	\section{Controlling the evolution of \texorpdfstring{$Q(G_m)$}{Lg}}\label{sec:Q}
	
	We recall the definitions of $m_{*}$, $\tau_Y$, $\tau_Q$ and $\tau$, see~\eqref{eq:defn-m-star} and~\eqref{eq:stopping-times} in Section~\ref{sec:over}.
	In this section, our goal is to prove Proposition~\ref{prop:Q}, which states that $\pr{0<\tau=\tau_Q<m_{*}}\, \to\, 0$.
	In other words, we show that it is unlikely that the process $(Q(G_m))_{m\ge 0}$ goes wrong before time $m_{*}$, and is the \emph{first} to do so.
	
	It is useful to define stopping times depending on whether the eventual failure occurs with $Q(G_m)$ ``too large'' (greater than $(1+g_Q(m))\tq(m)$), or ``too small'' (less than $(1-g_Q(m))\tq(m)$).
	Define $\tau_Q^{+}$ to be the minimum of $m_{*}$ and the first value of $m$ such that $\tq(m)>(1+g_Q(m))\tq(m)$.
	Similarly, define $\tau_Q^{-}$ to be the minimum of $m_{*}$ and the first value of $m$ such that $\tq(m)<(1-g_Q(m))\tq(m)$.
	Clearly, we have $\tau_Q = \min \{\tau_Q^{-}, \tau_Q^{+}\}$, and hence we need to show that both probabilities $\pr{0<\tau=\tau_Q^{+}<m_{*}}$ and $\pr{0<\tau=\tau_Q^{-}<m_{*}}$ tend to zero as $n$ grows.
	We focus on the case ``too large'', that is, for the rest of this section our main goal in to prove that 
	\[\pr{0<\tau=\tau_Q^{+}<m_{*}}\, \to\, 0.\]
	The proof for ``too small'' is essentially identical. In this case, instead of defining the sequence $X_i$ in terms of $Q(G_{i})\, -\, (1+g_Q(i))\tq(i)$, one would use instead $(1-g_Q(i))\tq(i)\, -\, Q(G_{i})$.
	
	It is natural at this point to encode the event $\{0<\tau=\tau_Q^{+}<m_{*}\}$ as a deviation event of a random process. Recall from Section~\ref{sec:over} that we have defined the random process $(X_m)_{m\ge0}$ as
	\[
	X_i\, :=\, \begin{cases} Q(G_{i})\, -\, (1+g_Q(i))\tq(i),& \text{ if } i\le \tau\\  
		Q(G_{\tau})\, -\, (1+g_Q(\tau))\tq(\tau),& \text{ if } i> \tau\,  .\end{cases}
	\]
	We observe that for the event $\{0<\tau=\tau_Q^{+}<m_{*}\}$ to occur it is necessary that the following holds:

	\begin{enumerate}
		\item[(i)] As $\tau > 0$, there is no problem with the behaviour of the random variables in the original random graph $G_0\sim G(n,p)$, and hence all items in Proposition~\ref{prop:refined} are satisfied.
		In particular, $Q(G_0)=(1\pm n^{-\delta})\tq(0)$ and, since $g_Q(0)=2n^{-\delta}$, we have $X_0 \le -g_Q(0)\tq(0)/2$;
		\item[(ii)] As $\tau_Q^+ < m_{*}$, there exists $m^{\dagger}\in \{1,\dots, m_{*}-1\}$ such that the failure event $Q(G_{m^{\dagger}}) > (1 + g_Q(m))\tq(m^{\dagger})$ occurs at time $m^{\dagger}$, and hence $X_{m^{\dagger}}>0$;
		\item[(iii)] As $\tau = \tau_Q^+$, no problem has occurred previously, so that $Q(G_m) = (1\pm g_Q(m))\tq(m)$ and $Y_e(G_m) = (1\pm g_Y(m))\ty(m)$ for all $e\in E(K_n)$ and all $0 \le m < m^{\dagger}$.
	\end{enumerate}
	
	It follows from items (i) and (ii) that if $\{0<\tau=\tau_Q^{+}<m_{*}\}$ occurs, then $X_{m^{\dagger}}-X_0 > g_Q(0)\tq(0)/2$ for some $m^{\dagger}\in \{1,\dots, m_{*}-1\}$. 
	If we show that the process $(X_m)_{m\ge 0}$ is a supermartigale, then we can bound the probability of the latter event via Azuma's inequality (Lemma~\ref{lem:hoeff-azuma}).
	Actually, we show something slightly different: instead of showing that the whole process is a supermartingale, we show that $X_m$ behaves as a supermartingale whenever $Q(G_m)$ is close to exceeding the allowed upper bound, $(1+g_Q(m))\tq(m)$.  This restriction is necessary, see~\eqref{eq:Exdiff} in the proof of Lemma~\ref{lem:Qsm}.
	
	For $i \in \{0,\dots, m_{*}\}$, define 
	$\psi_{i}$ to be the smallest index $m \in \{i,\ldots,m_{*}\}$ such that 
	\[ Q(G_m) \le  \left(1 + \dfrac{g_Q(m)}{2}\right)\tq(m).\] 
	If such an index does not exist, we set $\psi_{i} = m_{*}$.
	The key ingredients to prove Proposition~\ref{prop:Q} are the next two lemmas.
	
	\begin{lemma}\label{lem:Qsm} 
		Let $i \in \{0,1,\dots, m_{*}\}$.
		The process $(X_m: i \le m \le \psi_i)$ is a supermartingale with respect to its natural filtration, provided $n$ is sufficiently large.\end{lemma}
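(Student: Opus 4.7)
The plan is to verify $\Ex{X_{m+1} - X_m \mid \F_m} \le 0$ for each $i \le m < \psi_i$. Since the stopped process $X$ is frozen on $\{m \ge \tau\}$, we may restrict to $m < \min\{\tau, \psi_i\}$; in this regime the definitions of $\tau$ and $\psi_i$ guarantee
$Q(G_m) > (1 + g_Q(m)/2)\tq(m)$ and $Y_e(G_m) \in (1 \pm g_Y(m))\ty(m)$ for every edge $e \in G_m$. Writing $a := \binom{k}{2}^2/e(G_m)$, the recursions $\tq(m+1) = (1-a)\tq(m)$ and $g_Q(m+1) = (1+a)g_Q(m)$ yield
\[
(1 + g_Q(m+1))\tq(m+1) - (1 + g_Q(m))\tq(m) \;=\; -a(1 + ag_Q(m))\tq(m),
\]
so the task reduces to the one-step lower bound $\Ex{Q(G_m) - Q(G_{m+1}) \mid \F_m} \ge a(1 + ag_Q(m))\tq(m)$.

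The conditional expected decrease equals $T_2/Q(G_m)$, where $T_2 := \sum_{K, K' \subseteq G_m} \mathbf{1}[E(K) \cap E(K') \ne \emptyset]$ is the number of ordered pairs of $k$-cliques sharing at least one edge. Its companion sum
\[
T_1 \;:=\; \sum_{e \in G_m} Y_e(G_m)^2 \;=\; \sum_{K, K'} |E(K) \cap E(K')|
\]
satisfies $T_1 - T_2 = \sum_{(K,K'):|K \cap K'| \ge 3}\bigl(\binom{|K \cap K'|}{2} - 1\bigr)$, while Cauchy-Schwarz applied to $T_1 = \sum_e Y_e(G_m)^2$ with the identity $\sum_e Y_e(G_m) = \binom{k}{2} Q(G_m)$ gives $T_1 \ge aQ(G_m)^2$. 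Combining with $Q(G_m) > (1 + g_Q(m)/2)\tq(m)$,
\[
\frac{T_2}{Q(G_m)} \;\ge\; aQ(G_m) - E \;>\; a\tq(m)\left(1 + \frac{g_Q(m)}{2}\right) - E,
\]
where $E := (T_1 - T_2)/Q(G_m)$.

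Since $a$ tends to zero, the gap between this lower bound and the target $a(1 + ag_Q(m))\tq(m)$ is $ag_Q(m)\tq(m)(1/2 - a) \sim ag_Q(m)\tq(m)/2$, and it suffices to show $E$ is much smaller. Bounding the number of $k$-cliques $K' \subseteq G_m$ with $|K \cap K'| \ge 3$ by $\binom{k}{3}\max_{|S|=3} Y_S(G_m)$ (each such $K'$ contains at least one triple $S \subseteq V(K)$) yields $T_1 - T_2 \le O(k^5) \cdot Q(G_m) \cdot \max_S Y_S(G_m)$, hence $E \le O(k^5) \max_S Y_S(G_m)$. By monotonicity $Y_S(G_m) \le Y_S(G_0)$ and Proposition~\ref{prop:I}(4), $\max_S Y_S(G_0) \le n^{\eps + \max\{0, \gamma - 3\} + o(1)}$, while the margin $ag_Q(m)\tq(m)$ has order $n^{\gamma - 2 - \delta + o(1)}$ with $\gamma - 2 - \delta$ bounded below by a positive constant depending only on $\gamma_-$. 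Choosing $\eps$ sufficiently small, a case split according to $\gamma \in (2, 3]$ versus $\gamma > 3$ gives $E = o(ag_Q(m)\tq(m))$ uniformly in $\Lambda_{\gamma_-, p_+}$, which delivers the supermartingale inequality for all sufficiently large $n$. The main technical obstacle is this uniform error estimate; the key observation is that the overcounting $T_1 - T_2$ can be controlled by $\binom{k}{3} \max_S Y_S$ rather than by the (much less favourable) global quantity $\sum_S Y_S(G_m)^2$.
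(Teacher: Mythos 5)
Your proof is correct and follows the paper's argument in all essentials: the same exact computation of the deterministic increment $-a(1+a\,g_Q(m))\tq(m)$ with $a=\binom{k}{2}^2/e(G_m)$, the same identification of the conditional expected decrease of $Q$ with a pair-count over cliques sharing an edge, the same control of the overcount from pairs sharing at least three vertices via $\binom{k}{3}\max_{|S|=3}Y_S(G_m)\le \binom{k}{3}\max_S Y_S(G_0)$ together with Proposition~\ref{prop:I}(4), and the same use of the margin $Q(G_m)>(1+g_Q(m)/2)\tq(m)$ guaranteed by the definition of $\psi_i$. The one genuine (local) difference is how the main term is extracted from $\sum_{e}Y_e(G_m)^2$: the paper writes $Y_e=(1+\eta_e)\widebar{Y}(G_m)$ and uses the event $m<\tau_Y$ to get $|\eta_e|\le 2g_Y(m)$, incurring the error $E_2=k^4g_Y(m)^2Q(G_m)/e(G_m)$, whereas you apply Cauchy--Schwarz to obtain $\sum_e Y_e^2\ge \binom{k}{2}^2Q(G_m)^2/e(G_m)$ exactly, with no error term and no appeal to the $Y_e$-tracking at all (the hypothesis $Y_e(G_m)\in(1\pm g_Y(m))\ty(m)$ that you record in your setup is in fact never used). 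For this ``too large'' direction your route is cleaner and marginally stronger; note, though, that it is one-sided: the omitted symmetric statement for $\tau_Q^-$ requires an \emph{upper} bound on $\sum_e Y_e^2$, where Cauchy--Schwarz gives nothing and the $\eta_e$/$g_Y$ control becomes essential. Your uniform error comparison ($O(k^5)n^{\eps+\max\{0,\gamma-3\}+o(1)}$ against the margin $a\,g_Q(m)\tq(m)=n^{\gamma-2-\delta+o(1)}$, with $\eps$ chosen small in terms of $\gamma_-$) is the content of the paper's Claim~\ref{eq:boundonE} and checks out in both cases $\gamma\le 3$ and $\gamma>3$.
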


	\begin{lemma}\label{lem:Qchange} 
		For every $m \in \{0,\ldots,m_{*}-1\}$ we have
		\[|X_{m+1}-X_{m}| = O\left(\dfrac{k^4\tq(m)}{pn^2}\right).\]
	\end{lemma}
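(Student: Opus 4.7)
The plan is to separate the increment $X_{m+1}-X_m$ into a random component coming from the change in $Q$ and a deterministic component coming from the change in $(1+g_Q)\tq$, bound each by $O(k^4\tq(m)/(pn^2))$, and combine. The case $m \ge \tau$ is immediate from the definition of the process $(X_i)$: the value is frozen after time $\tau$, so $X_{m+1}=X_m$ and the bound is trivial. Throughout the remainder I focus on the case $m<\tau$, where we may write
\[
X_{m+1}-X_m \, =\, \big[Q(G_{m+1})-Q(G_m)\big] \, +\, \big[(1+g_Q(m))\tq(m)-(1+g_Q(m+1))\tq(m+1)\big].
\]

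For the random part, my plan is to observe that when a $k$-clique $K$ is removed from $G_m$, the cliques that are destroyed are precisely those $k$-cliques of $G_m$ that share at least one edge with $K$. Summing over the $\binom{k}{2}$ edges of $K$ gives
\[
|Q(G_{m+1})-Q(G_m)|\, \le\, \sum_{e\in E(K)} Y_e(G_m).
\]
Since $m<\tau\le \tau_Y$, every $Y_e(G_m)$ lies in the allowed interval, so $Y_e(G_m)\le (1+g_Y(m))\ty(m)\le 2\ty(m)$ using~\eqref{eq:boundongQgY}. Together with $\ty(m)=\binom{k}{2}\tq(m)/e(G_m)$ and the observation that $e(G_m)=e(G_0)-m\binom{k}{2}=\Theta(pn^2)$ throughout $m\le m_*$ (combining item (1) of Proposition~\ref{prop:refined} with $m_*\le pn^2/(2k^2)$), this yields the desired $O(k^4\tq(m)/(pn^2))$ bound on the random change.

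For the deterministic part, I would set $\alpha:=\binom{k}{2}^2/e(G_m)$. The definitions~\eqref{def:tq} and~\eqref{eq:gQerror} give the one-step recursions $\tq(m+1)=(1-\alpha)\tq(m)$ and $g_Q(m+1)=(1+\alpha)g_Q(m)$, so a direct expansion yields
\[
(1+g_Q(m))\tq(m)-(1+g_Q(m+1))\tq(m+1)\, =\, \alpha\tq(m)\bigl(1+\alpha\, g_Q(m)\bigr).
\]
Since $\alpha=O(k^4/(pn^2))$ and $g_Q(m)\le n^{-\delta/4}\ll 1$ in the relevant range (again by~\eqref{eq:boundongQgY}), this is $O(k^4\tq(m)/(pn^2))$. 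Combining with the bound on the random part finishes the argument.

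There is no serious obstacle; the lemma is essentially bookkeeping. The only conceptual point is the need to restrict to $m<\tau$ and invoke $\tau\le\tau_Y$ to control the $Y_e(G_m)$, after which the remaining manipulations are purely algebraic.
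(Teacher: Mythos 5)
Your proposal is correct and follows essentially the same route as the paper: the paper also splits the increment into the random change in $Q$ (bounded by $\sum_{e\in E(K)}Y_e(G_m)\le 2\binom{k}{2}\ty(m)$ using $m<\tau$) and the deterministic change in $(1+g_Q)\tq$ (which the paper handles via Claim~\ref{claim:change-Rm}, the same one-step recursion computation you do inline). No gaps.
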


	\begin{proof}[Proof of Proposition~\ref{prop:Q} assuming Lemmas~\ref{lem:Qsm} and~\ref{lem:Qchange}]
		As we discussed at the beginning of this section, we focus on showing that $\pr{0<\tau=\tau_Q^{+}<m_{*}}\, \to\, 0$. The proof that $\pr{0<\tau=\tau_Q^{-}<m_{*}}\, \to\, 0$ is essentially identical.
		
		If the event $ \{0<\tau=\tau_Q^{+}<m_{*}\}$ occurs, then $G_0$ satisfies all items (1)--(4) in Proposition~\ref{prop:I}.
		Moreover, since $g_Q(0)=2n^{-\delta}$, we have $Q(G_0) = (1\pm g_Q(0)/2)\tq(0)$.
		This implies that there exists a step $i \in \{1,\ldots,\tau\}$ for which
		$Q(G_{i-1}) \le (1 + g_Q(i-1)/2)\tq(i-1)$ and $Q(G_{m}) > (1 + g_Q(m)/2)\tq(m)$ for all $m \in \{i,\ldots,\tau\}$.
		In particular, we have $\psi_i = m_{*}$ and, as the process freezes at the stopping time $\tau$, by Lemma~\ref{lem:Qsm} the process  $(X_m: m \ge i)$ is a supermartigale.

		As we expect $Q(G_i)$ and $Q(G_{i-1})$ to be ``very close'' to each other, we have an upper bound on $Q(G_i)$ which is better than $Q(G_i) \le (1 + g_Q(i))\tq(i)$.
		Note that
		\begin{align*}
			Q(G_{i}) \le Q(G_{i-1}) < \left(1 + \dfrac{g_Q(i-1)}{2}\right)\tq(i-1) < \left(1 + \dfrac{3g_Q(i)}{4}\right)\tq(i).
		\end{align*}
		The last inequality follows from the fact that $g_Q(i) = g_Q(i-1)\big(1+n^{-2+o(1)}\big)$ and that $\tq(i)=\tq(i-1)\big(1-n^{-2+o(1)}\big)$.
		In particular, we have $X_i \le -g_Q(i)\tq(i)/4$.
		As $X_{m_{*}}=X_{\tau}>0$, we obtain $X_{m_{*}}-X_i \ge g_Q(i)\tq(i)/4$.
		
		For each $i \in \{1,\ldots,m_{*}\}$, define
		\begin{align*}
			A_i :=	\left \{ X_{m_{*}}-X_i>\dfrac{g_Q(i)\tq(i)}{4}\right \} \cap \left \{ \psi_i = m_{*} \right \}.
		\end{align*}
		From all the discussion above, it follows that
		\begin{align}\label{eq:union-Ai}
			\{0<\tau=\tau_Q^{+}<m_{*}\} \se \bigcup  _{i=1}^{m_{*}}	A_i.
		\end{align}	
		We now fix some $i \in [m_{*}]$ and bound the probability of the event $A_i$.
		We encode this event as a deviation event of a supermartingale.
		By Lemmas~\ref{lem:Qsm} and~\ref{lem:Qchange}, we have that $(X_m)_{m= i}^{\psi_i}$ is a supermartingale with increments bounded by $O(1)k^4\tq(i)/pn^2$.
		As $ m_{*} \le pn^2/4$, by the Hoeffding--Azuma inequality (Lemma~\ref{lem:hoeff-azuma}) we obtain
		\begin{align}\label{eq:bound-Ai}
			\pr{A_i}\, \le\,  \exp\left(\frac{-g_Q(i)^2\tq(i)^2}{O(1)pn^2\big(k^4\tq(i)/pn^2\big)^2}\right) \, \le \, \exp\left(\frac{-g_Q(i)^2p n^{2}}{O(1)k^8}\right) \, \le \, n^{-3}
		\end{align}
		for all sufficiently large $n$.
		Finally, we may take a union over the at most $n^2$ values of $i$. It follows from Proposition~\ref{prop:I},~\eqref{eq:union-Ai} and~\eqref{eq:bound-Ai} that $\pr{0<\tau=\tau_Q^{+}<m_{*}}$ tends to $0$ as $n$ tends to infinity.
	\end{proof}

	In order to prove Lemmas~\ref{lem:Qsm} and~\ref{lem:Qchange}, we need a good approximation to the increment coming from the deterministic part of $(X_m)_{m\ge0}$, namely
	\begin{align}\label{eq:Rm}
		R_m \coloneqq (1+g_Q(m+1))\tq(m+1)\, -\, (1+g_Q(m))\tq(m).
	\end{align}
	\begin{claim}\label{claim:change-Rm}
		For every $m \in \{0,\ldots,m_{*}-1\}$ we have
		\[R_m = - \frac{\binom{k}{2}^2 \tq(m)}{e(G_{m})} \pm \dfrac{k^8\tq(m)}{p^2n^4}.\]
	\end{claim}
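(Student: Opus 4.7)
The plan is to expand $R_m$ directly using the explicit recursions for $\tq$ and $g_Q$, and then separate the deterministic main term from a lower-order correction. Writing $a_m := \binom{k}{2}^2/e(G_m)$, the recursions in~\eqref{def:tq} and~\eqref{eq:gQerror} can be rewritten (using $e(G_m) = e(G_0) - m\binom{k}{2}$) as $\tq(m+1) = \tq(m)(1 - a_m)$ and $g_Q(m+1) = g_Q(m)(1 + a_m)$. Substituting into the definition~\eqref{eq:Rm} gives
\[
R_m \; =\; \bigl(1 + g_Q(m)(1+a_m)\bigr)\tq(m)(1 - a_m)\; -\; (1 + g_Q(m))\tq(m).
\]

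Next I would expand and simplify. Multiplying out the first factor and subtracting yields
\[
R_m \; =\; -a_m \tq(m)\; +\; g_Q(m)\tq(m)\bigl[(1+a_m)(1-a_m) - 1\bigr]\; =\; -a_m \tq(m)\; -\; g_Q(m)\tq(m)\, a_m^2.
\]
The first term is exactly $-\binom{k}{2}^2 \tq(m)/e(G_m)$, which is the main term in the claim. So it only remains to show that the second term is absorbed into the stated error $\pm k^8 \tq(m)/(p^2 n^4)$.

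For the error bound, I would use the following ingredients which are already available in the paper. First, by~\eqref{eq:boundongQgY} we have $g_Q(m) \le n^{-\delta/4} \le 1$ for $m \le m_*$ and $n$ large. Second, since $m \le m_* \le pn^2/(2k^2)$ and item (1) of Proposition~\ref{prop:I} gives $e(G_0) = pn^2 \pm n^{3/2}$ with high probability, we have $e(G_m) \ge (1+o(1))pn^2/4$, and in particular $1/e(G_m)^2 \le 16/(p^2 n^4)$ for large $n$. Combining these with $\binom{k}{2}^4 \le k^8$ gives
\[
\bigl|\, g_Q(m)\tq(m)\, a_m^2\, \bigr|\; \le\; \frac{g_Q(m)\,\tbinom{k}{2}^4\,\tq(m)}{e(G_m)^2}\; \le\; \frac{k^8\tq(m)}{p^2n^4}
\]
for all sufficiently large $n$, which establishes the claim.

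The computation is essentially mechanical once the recursive relations are substituted, so no single step is a genuine obstacle; the only mild subtlety is that the bound on $e(G_m)$ is a high-probability statement, but this is harmless because the whole analysis of the process in Section~\ref{sec:Q} is already being carried out on the event from Proposition~\ref{prop:I}. In particular, the definition of $R_m$ as a deterministic function of $e(G_0)$ (and $m$, $k$) means the claim should be understood on the high-probability event $e(G_0) = pn^2 \pm n^{3/2}$, which is exactly the regime in which it is applied.
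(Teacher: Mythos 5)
Your proposal is correct and follows essentially the same route as the paper: substitute the one-step recursions $\tq(m+1)=(1-\binom{k}{2}^2/e(G_m))\tq(m)$ and $g_Q(m+1)=(1+\binom{k}{2}^2/e(G_m))g_Q(m)$ into the definition of $R_m$, obtain the exact identity $R_m=-\binom{k}{2}^2\tq(m)/e(G_m)-\binom{k}{2}^4 g_Q(m)\tq(m)/e(G_m)^2$, and absorb the second term into the error using $g_Q(m)\le 1$ and $e(G_m)=\Omega(pn^2)$. Your added remarks on the error estimate and on the high-probability conditioning on $e(G_0)$ only make explicit what the paper leaves implicit.
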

	
	\begin{proof}
		Recall the definitions of $\tq$ and $g_Q$ in Section~\ref{sec:over}, see~\eqref{def:tq} and~\eqref{eq:gQerror}. As $e(G_{m})=e(G_0)-m\binom{k}{2}$, we have
		\[\tq(m+1)=\left(1-\dfrac{\binom{k}{2}^2}{e(G_m)}\right) \tq(m) \quad \text{and} \quad g_Q(m+1) = \left(1+ \dfrac{\binom{k}{2}^2}{e(G_m)}\right)g_Q(m),\]
		and hence we obtain
		\begin{align*}
			\nonumber R_m &= \left(1\, +\, g_Q(m)\, +\, \frac{\binom{k}{2}^2}{e(G_{m})} g_Q(m)\right)\left(1-\frac{\binom{k}{2}^2}{e(G_{m})}\right)\tq(m)\, -\, (1+g_Q(m))\tq(m)\nonumber \\
			& = - \frac{\binom{k}{2}^2 \tq(m)}{e(G_{m})} - \frac{\binom{k}{2}^4 g_Q(m)\tq(m)}{e(G_{m})^2}\\
			& = - \frac{\binom{k}{2}^2 \tq(m)}{e(G_{m})} \pm \dfrac{k^8\tq(m)}{p^2n^4}.
		\end{align*}
	\end{proof}

	\begin{proof}[Proof of Lemma~\ref{lem:Qsm}] 
		We observe that the result holds trivially if either $\psi_i = i$ or $\tau = 0$. We therefore assume that $\psi_i > i$ and $\tau > 0$ throughout the proof.
	
		Fix $m \in \{i,\ldots, \psi_i-1\}$. 
		As the increment $X_{m+1}-X_{m}$ is identically $0$ if $m \ge \tau$, we may assume that $m < \tau$ and so the increment $X_{m+1}-X_{m}$ is given by
		\[
		Q(G_{m+1})\, -\, Q(G_{m})\, -\,  R_m.
		\]
		Our aim is to prove that $\Ex{X_{m+1}-X_{m}|G_{m}}\le 0$,
		which is equivalent to
		\eq{suffQ}
		\mathbb{E}\Big[Q(G_{m+1})\, -\, Q(G_{m})\Big| G_{m}\Big]\, -\,  R_m\, \le 0\, .
		\eqe
		
		Let us first analyse the one-step change $Q(G_{m+1})-Q(G_{m})$.  At step $m$ of the process a uniformly random clique $K\in G_{m}$ is selected and its edges are removed.  The $k$-cliques that are destroyed in $G_m$ are precisely those that share an edge with $K$.  It follows that the number of $k$-cliques removed in total is at most
		\[
		\sum_{e\in E(K)}Y_e(G_{m})\, .
		\]
		This might be a slight overcount, as cliques that have intersection at least $3$ with $K$ may be counted more than once.  
		As $\tau >0$ (so that $G_0$ satisfies all items (1)--(4) in Proposition~\ref{prop:I}), the overcount is by at most $\sum_{S\subseteq K:|S|=3}Y_S(G_{m})\le k^3 n^{\delta}\max\{1,\Ex{Y_{S}(G_0)}\}$.  
		It follows that
		\eq{Qchange}
		Q(G_{m+1})\, -\, Q(G_{m})\, =\, -\sum_{e\in E(K)}Y_e(G_{m})\, \pm\, E_1\, ,
		\eqe
		where $E_1:=k^3n^{\delta}\max\{1,\Ex{Y_{S}(G_0)}\}$.
		
		Now, define
		\[
		\widebar{Y}(G_{m})\, :=\, \frac{1}{e(G_{m})}\sum_{e\in E(G_{m})}Y_e(G_{m})\, =\, \frac{\binom{k}{2}Q(G_{m})}{e(G_{m})}.
		\]
		The quantity $\widebar{Y}(G_{m})$ is the average $Y_e(G_{m})$ value over $e\in G_{m}$.  We may now write each $Y_e(G_{m})$ as $(1+\eta_e)\widebar{Y}(G_{m})$ where $\sum_{e}\eta_e=0$.  
		As $m < \tau$, we have
		\begin{align}\label{eq:tyandyavg}
			\ty(m) = (1\pm g_Q(m)) \widebar{Y}(G_m) \qquad
			\text{and} \qquad Y_e(G_m) = (1\pm g_Y(m)) \ty(m)
		\end{align}
		for all $e \in E(K_n)$.
		Moreover, it follows from~\eqref{eq:tyandyavg} that
		\[(1+\eta_e)\widebar{Y}(G_{m}) =  Y_e(G_m) = (1\pm g_Y(m)) (1\pm g_Q(m)) \widebar{Y}(G_m),\]
		and hence
		\[|\eta_e| \le g_Q(m)+g_Y(m)+(g_Qg_Y)(m) \le 2g_Y(m).\]
		for all $e \in E(K_n)$. The last inequality follows from~\eqref{eq:gQerror},~\eqref{eq:gY} and~\eqref{eq:boundongQgY}.
		
		By taking the conditional expectation in~\eqr{Qchange}, we obtain
		\begin{align*}
			\mathbb{E}\Big[Q(G_{m+1})\, -\, Q(G_{m})\Big| G_{m}\Big]\, & =\, \frac{-1}{Q(G_{m})}\sum_{K}\sum_{e\in E(K)}Y_e(G_{m}) \, \pm\, E_1\phantom{\bigg|}\\
			& =\, \frac{-1}{Q(G_{m})}\sum_{e\in E(G_{m})}Y_e(G_{m})^2\, \pm \, E_1\phantom{\bigg|}.
		\end{align*}
		In the first equality, the first sum is over all $k$-cliques $K$ in $G_m$. The second equality follows by changing the order of summation and noticing that $Y_e(G_m)$ shows up $Y_e(G_m)$ times in the double sum.
		By replacing $Y_e(G_{m})$ by $(1+\eta_e)\widebar{Y}(G_{m})$ and using that $\sum_{e}\eta_e=0$ and that $|\eta_e| \le 2g_Y(m)$ for all $e \in E(G_m)$, we obtain
		\begin{align}\label{eq:expdeltaQ}
			\nonumber\mathbb{E}\Big[Q(G_{m+1})\, -\, Q(G_{m})\Big| G_{m}\Big]\, & =\, \frac{-1}{Q(G_{m})}\sum_{e\in E(G_{m})}(1+\eta_e)^2\widebar{Y}(G_{m})^2\, \pm E_1 \phantom{\bigg|}\\
			& =\, -\frac{\binom{k}{2}^2 Q(G_{m})}{e(G_{m})}\, \pm \, E_1\, \pm\, E_2 \phantom{\bigg|},
		\end{align}
		where $E_2:=k^4g_{Y}(m)^2 Q(G_{m})/e(G_{m})$.

		The one-step change coming from the determinist part in the left-hand side of~\eqr{suffQ} was analysed in Claim~\ref{claim:change-Rm}, which gives
		\begin{align}\label{eq:deltatildaQ}
			R_m = - \frac{\binom{k}{2}^2 \tq(m)}{e(G_{m})} \pm E_0,
		\end{align}
		where $E_0:= k^8\tq(m)/(p^2n^4)$.
		Now, set $E:=E_0+E_1+E_2$.
		
		\begin{claim}\label{eq:boundonE}
			$E \ll g_Q(m)\tq(m)/e(G_m).$
		\end{claim}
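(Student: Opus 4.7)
The plan is to bound each of the three error terms $E_0,E_1,E_2$ separately and show that each is $o(1)\cdot g_Q(m)\tq(m)/e(G_m)$. The target is comfortably polynomial in $n$ once we observe that $e(G_m)\le e(G_0) = (1+o(1))pn^2/2$ and $g_Q(m)\ge 2n^{-\delta}$ (so $g_Q(m)\tq(m)/e(G_m)\gtrsim n^{-\delta}\tq(m)/(pn^2)$).

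The main tool is a pair of quantitative estimates for $g_Q$ and $g_Y$ valid for $m\le m_*$. Writing $A_i=\binom{k}{2}^2/(e(G_0)-i\binom{k}{2})$, we have $\sum_{i<m_*}A_i\le k^4 m_*/(2pn^2)+o(1)\le \delta\log n/8$, so $\prod(1+A_i)\le n^{\delta/8}$ and $\prod(1+2A_i)\le n^{\delta/4}$. Combined with the opening values, this gives $2n^{-\delta}\le g_Q(m)\le 2n^{-7\delta/8}$ and $g_Y(m)\le 10n^{-3\delta/4}$. A second useful inequality is $g_Y(m)\le (5/2)n^{\delta}g_Q(m)^2$, obtained from $(1+2A_i)\le (1+A_i)^2$. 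For $\tq(m)$ we will use the monotonicity-type lower bound $\tq(m)\ge n^{\gamma-\delta/8}$, valid for $m\le m_*$ by the same computation on $\sum A_i$.

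Given these estimates, the three terms are handled as follows. For $E_0$: the ratio $E_0/(g_Q(m)\tq(m)/e(G_m))$ is at most $k^8 e(G_m)/(p^2n^4 g_Q(m))\lesssim k^8/(pn^{2-\delta})$, which is $o(1)$ since $k=O(\log n)$, $p=n^{-o(1)}$, and $\delta<1$. For $E_2$: using $Q(G_m)\le 2\tq(m)$ (available since $m<\tau$), the ratio is at most $2k^4 g_Y(m)^2/g_Q(m)$; applying $g_Y(m)\le (5/2)n^{\delta}g_Q(m)^2$ and the upper bound on $g_Q(m)$ gives $g_Y(m)^2/g_Q(m)\le (25/4)n^{2\delta}g_Q(m)^3\le 50 n^{-5\delta/8}$, which beats $k^{-4}$ since $k$ is polylog. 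For $E_1$: the ratio is bounded by $k^3 n^{\delta}\max\{1,\Ex{Y_S(G_0)}\}\cdot e(G_m)/(g_Q(m)\tq(m))$, and I split into two cases. When $\gamma\ge 3$ we have $\Ex{Y_S(G_0)}=n^{\gamma-3+o(1)}$, giving an exponent $17\delta/8-1+o(1)$, negative since $\delta\le 1/10$. When $\gamma\in(2,3)$ we have $\max\{1,\Ex{Y_S(G_0)}\}=1$, giving exponent $17\delta/8+2-\gamma$, and this is negative because $\delta\le(\gamma-2)/10$ forces $17\delta/8<\gamma-2$.

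The most delicate case is the $E_1$ bound when $\gamma$ is close to $2$: the target $g_Q(m)\tq(m)/e(G_m)$ shrinks as $\gamma\downarrow 2$, and so does the error term $E_1$, but we must verify the margin is kept. The key is that $\delta$ was defined as $\min\{\gamma-2,1\}/10$ precisely so that the fraction $17\delta/8<\gamma-2$ holds with room to spare; the same scaling of $\delta$ also reserves margin for $E_2$, so the entire claim ultimately reduces to this arithmetic check on $\delta$.
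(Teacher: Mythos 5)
Your proposal is correct and follows essentially the same route as the paper: bound $E_0$, $E_1$, $E_2$ separately against $g_Q(m)\tq(m)/e(G_m)\ge n^{\gamma-2-O(\delta)+o(1)}$, split the $E_1$ estimate according to whether $\gamma\ge 3$, and observe that $\delta=\min\{\gamma-2,1\}/10$ gives the needed room. One intermediate constant is off — since $e(G_i)$ can fall to about $pn^2/4$, one only gets $\sum_i A_i\le(1+o(1))\delta\log n/4$ rather than $\delta\log n/8$, so e.g.\ $g_Y(m)\le n^{-\delta/2+o(1)}$ rather than $10n^{-3\delta/4}$ — but all your final exponents remain negative under the corrected constants, so the argument stands.
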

		
		\begin{proof}
			
			Note that
			\[\dfrac{k^4 g_Y(m)^2}{g_Q(m)} = n^{-\delta+o(1)} \exp \left( (3+o(1))\sum \limits_{i=1}^{m-1} \dfrac{\binom{k}{2}^2}{e(G_i)}\right) \le n^{-\delta/4+o(1)}.\]
			Moreover, since $n^{-\delta} \le g_Q(m)$ (see~\eqref{eq:boundongQgY}), it follows that $E_0+E_2\ll g_Q(m)\tq(m)/e(G_{m})$.
			In order to upper bound $E_1$, we first lower bound $g_Q(m)\tq(m)$ in terms of $\tq(0)$.
			Recall the definitions of $\tq$ and $g_Q$ in Section~\ref{sec:over}, see~\eqref{def:tq} and~\eqref{eq:gQerror}. 
			Note that
			\[g_Q(m)\tq(m)=2n^{-\delta}\prod_{i=0}^{m-1}\left(1-\dfrac{\binom{k}{2}^4}{e(G_i)^2}\right)\tq(0) \ge n^{-\delta}\exp \left(- \dfrac{k^8}{p^2n^4} \cdot m\right)\tq(0)\ge n^{-\delta+o(1)}\tq(0).\]
			In the second inequality, we used that $1-x/2 \ge e^{-x}$ for all $x \in (0,1/2)$ and that $e(G_i)\ge pn^2/4$ for all $i \le m_{*}$; in the third inequality, we used that $m \le m_{*}$.
			As $e(G_m) \le e(G_0)$, we have
			\begin{align}\label{eq:gqe}
				\dfrac{g_Q(m)\tq(m)}{e(G_m)}\ge n^{-\delta+o(1)}\dfrac{\tq(0)}{e(G_0)} = n^{\gamma-2-\delta+o(1)}.
			\end{align}
			On the other hand, $E_1 \le n^{\delta+o(1)+\max\{0,\gamma-3\}}.$
			Recall from~\eqref{eq:delta} that $\delta =\min\{\gamma-2,1\}/10.$
			Thus, it follows from~\eqref{eq:gqe} that $E_1 \ll g_Q(m)\tq(m)/e(G_m)$. 
			This finishes the proof of the claim.
		\end{proof}

		We are now nearly ready to complete our proof of~\eqr{suffQ}.  
		As $m \in \{i,\ldots,\psi_i-1\}$, we have $Q(G_{m})>(1+g_Q(m)/2)\tq(m)$. By combining this with~\eqref{eq:expdeltaQ},~\eqref{eq:deltatildaQ} and Claim~\ref{eq:boundonE}, we obtain
		\begin{align}
			\Ex{X_{m+1}-X_{m}|G_{m}}&=\, -\frac{\binom{k}{2}^2 (Q(G_{m})-\tq(m))}{e(G_{m})}\, \pm \, E\phantom{\bigg|}\label{eq:Exdiff} \\
			& \le\, -\frac{\binom{k}{2}^2 g_Q(m)\tq(m)}{2e(G_{m})}\, \pm \, E \phantom{\bigg|} \nonumber \\
			& \le \, 0\, \phantom{\bigg|} \nonumber.
		\end{align}
		We remark that this is why we need to use the bound $Q(G_m)\ge(1+g_Q(m)/2)\tq(m)$.
		If we only use the crude bound $Q(G_m)\ge(1-g_Q(m))\tq(m)$, this would not be enough to show that $(X_m: i \le m \le \psi_i)$ is a supermartingale.
        \end{proof}

	We now prove Lemma~\ref{lem:Qchange}, on the absolute value of the increment $|X_{m+1}-X_{m}|$.
	
	\begin{proof}[Proof of Lemma~\ref{lem:Qchange}] 
		If $m \ge \tau$, then the increment is identically $0$, and so we assume $m \le \tau-1$.
		The absolute value $|X_{m+1}-X_{m}|$ is at most the maximum of
		\[
		\Big|Q(G_{m+1})\, -\, Q(G_{m})\Big|\quad \text{and}\quad  \Big|(1+g_Q(m+1))\tq(m+1)\, -\, (1+g_Q(m))\tq(m)\Big|\, .
		\]
		Then, it suffices to prove that each of these is at most $O(1)k^4\tq(m)/pn^2$.  
		By Claim~\ref{claim:change-Rm}, the second expression is equal to $\binom{k}{2}^2 \tq(m)/e(G_{m})\, \pm\, k^8\tq(m)/(p^2n^4)=O(1)k^4\tq(m)/pn^2$.  
		The first expression, $\big|Q(G_{m+1})\, -\, Q(G_{m})\big|$, is exactly the number of $k$-cliques removed from $G_{m}$ when we remove the randomly selected $k$-clique $K$.  
		As $Y_e(G_m)\le (1+g_Y(m))\ty(m) \le 2\ty(m)$ for every $e \in K_n$ and $m < \tau$,
		for every choice of $K$ we obtain
		\[
		\Big|Q(G_{m+1})\, -\, Q(G_{m})\Big|\, \le \,\sum_{e\in E(K)}Y_e(G_{m})\, \le\, 2\binom{k}{2}\ty(m) \, =\, \frac{O(1)k^4\tq(m)}{pn^2}\, ,
		\]
		as required.
	\end{proof}
	
	\section{Controlling the evolution of \texorpdfstring{$Y_e(G_m)$}{Lg} for all \texorpdfstring{$e\in E(K_n)$}{Lg}}\label{sec:Y}
	
	As in the previous section our objective is the prove that random variables associated with the process stay within their allowed intervals.  We recall that $\tau_{Y_e}$ is the stopping time associated with a random variable $Y_e(G_m)$ leaving $(1\pm g_Y(m))\ty(m)$ and $\tau_Y$ is the minimum of these values.  
	As in Section~\ref{sec:Q}, it is useful to define stopping times depending on whether the eventual failure occurs with $Y_e(G_m)$ ``too large'' (greater than $(1+g_Y(m))\ty(m)$), or ``too small'' (less than $(1-g_Y(m))\ty(m)$).
	For each $e \in E(K_n)$, define $\tau_{Y_e}^{+}$ to be the minimum of $m_{*}$ and the first value of $m$ such that $Y_{e}(m)>(1+g_Q(m))\ty(m)$.
	Similarly, define $\tau_{Y_e}^{-}$ to be the minimum of $m_{*}$ and the first value of $m$ such that $Y_e(m)<(1-g_Y(m))\ty(m)$.
	Lastly, we set
	\[\tau^{+}_Y = \min \{\tau_{Y_e}^{+}: e \in E(K_n)\} \qquad \text{and} \qquad \tau^{-}_Y = \min \{\tau_{Y_e}^{-}: e \in E(K_n)\}.\]

	Clearly, we have that $\tau_Y = \min \{\tau^{-}_Y, \tau^{+}_Y \}$, and hence we need to show that both probabilities $\pr{0<\tau=\tau_Y^{+}<m_{*}}$ and $\pr{0<\tau=\tau_Y^{-}<m_{*}}$ tend to zero as $n$ grows.
	As in Section~\ref{sec:Q}, we focus on the case where $Y_e(m)$ leaves the interval by being ``too large''.
	That is, for the rest of this section our main goal in to prove that $\pr{0<\tau=\tau_Y^{+}<m_{*}}\, \to\, 0$.
	For that, it suffices to prove that
	\[
	\pr{0<\tau=\tau_{Y_e}^{+}<m_{*}}\, \le\, n^{-3}
	\]
	for every $e \in E(K_n)$ and every sufficiently large $n$.
	The proof for the ``too small'' case is essentially identical. We omit the details.
	
	As in Section~\ref{sec:Q}, it is natural to encode the event $\{0<\tau=\tau_{Y_e}^{+}<m_{*}\}$ as a deviation event of a random process. Define
	\[
	X'_m\, :=\, \begin{cases} Y_e(G_{m})\, -\, (1+g_Y(m))\ty(m),& \text{ if } m \le \tau\\  
		Y_e(G_{\tau})\, -\, (1+g_Y(\tau))\ty(\tau),& \text{ if } m > \tau\,  .\end{cases}
	\]
	For $i \in \{0,\dots, m_{*}\}$, define $\psi'_{i}$ to be the smallest index $m \in \{i,\ldots,m_{*}\}$ such that 
	\[Y_e(G_m) \le  \left(1 + \dfrac{g_Y(m)}{2}\right)\ty(m).\] 
	If such an index does not exist, we set $\psi'_{i} = m_{*}$.
	The key ingredients to prove Proposition~\ref{prop:Y}  are the next two lemmas.

	\begin{lemma}\label{lem:Ysm}
		Let $i \in \{0,1,\dots, m_{*}\}$. The process $(X'_m: i \le m \le \psi'_i)$ is a supermartingale with respect to its natural filtration, provided $n$ is sufficiently large.\end{lemma}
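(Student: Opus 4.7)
The plan is to mirror closely the proof of Lemma~\ref{lem:Qsm}. First I dispose of the trivial cases: if $\psi'_i = i$ or $\tau = 0$ the result is immediate; otherwise, for any $m \in \{i, \dots, \psi'_i - 1\}$ with $m \ge \tau$ the increment $X'_{m+1} - X'_m$ is identically zero by construction, so I may assume $m < \tau$. On this event $G_0$ satisfies items~(1)--(4) of Proposition~\ref{prop:I}, we have $Y_{e'}(G_m) = (1 \pm g_Y(m))\ty(m)$ for every $e' \in E(K_n)$, and the supermartingale property reduces to showing
\[
\mathbb{E}\big[Y_e(G_{m+1}) - Y_e(G_m) \,\big|\, G_m\big] \,\le\, (1 + g_Y(m+1))\ty(m+1) - (1 + g_Y(m))\ty(m) \,=:\, R'_m.
\]

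To handle the left-hand side, let $K$ be the uniformly random $k$-clique removed at step $m$: then $Y_e(G_m) - Y_e(G_{m+1})$ counts the $k$-cliques $K' \subseteq G_m \cup \{e\}$ containing $e$ that share some edge distinct from $e$ with $K$. Swapping the order of summation and dividing by $Q(G_m)$ yields
\[
\mathbb{E}\big[Y_e(G_m) - Y_e(G_{m+1}) \,\big|\, G_m\big] \,=\, \frac{1}{Q(G_m)} \sum_{K' \ni e} \sum_{e' \in E(K') \setminus \{e\}} Y_{e'}(G_m) \;-\; (\text{overcount}),
\]
where the overcount collects the cases in which $K$ shares at least two distinct edges with $K'$. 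Plugging in $Y_{e'}(G_m) = (1 \pm g_Y(m))\ty(m)$ together with $\ty(m) = \binom{k}{2}\tq(m)/e(G_m)$ delivers
\[
\mathbb{E}\big[Y_e(G_m) - Y_e(G_{m+1}) \,\big|\, G_m\big] \,=\, \frac{\binom{k}{2}(\binom{k}{2} - 1)}{e(G_m)}\, Y_e(G_m) \,\pm\, E,
\]
where $E$ absorbs the deviation of each $Y_{e'}$ from $\ty$, the overcount itself (controlled by combining $Y_S(G_m) \le Y_S(G_0)$ with Proposition~\ref{prop:I}(4) for the $|S| = 3$ contribution, and using $Y_S \le Y_{S'}$ for any $S' \subset S$ to reduce the $|S| = 4$ contribution to the $|S| = 3$ case), and quadratic Taylor remainders.

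For the right-hand side I would run the analogue of Claim~\ref{claim:change-Rm}. Using the identities $\tq(m+1) = (1 - \binom{k}{2}^2/e(G_m))\tq(m)$, $e(G_{m+1}) = e(G_m) - \binom{k}{2}$ and $g_Y(m+1) = (1 + 2\binom{k}{2}^2/e(G_m))\, g_Y(m)$, an elementary expansion gives
\[
R'_m \,=\, -\,\frac{\binom{k}{2}(\binom{k}{2} - 1)}{e(G_m)}\, \ty(m) \,+\, \frac{\binom{k}{2}(\binom{k}{2} + 1)}{e(G_m)}\, g_Y(m)\, \ty(m) \,\pm\, E'.
\]
The coefficient $\binom{k}{2} + 1$ (versus the leading $\binom{k}{2} - 1$ coming from $\ty$ alone) is precisely what the factor $2$ in the recursion for $g_Y$ was designed to produce. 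Subtracting the two displays and using $Y_e(G_m) - \ty(m) > g_Y(m)\ty(m)/2$ on $\{m \le \psi'_i - 1\}$ leaves
\[
\mathbb{E}\big[X'_{m+1} - X'_m \,\big|\, G_m\big] \,\le\, -\,\frac{\binom{k}{2}(3\binom{k}{2} + 1)}{2\, e(G_m)}\, g_Y(m)\, \ty(m) \,+\, (E + E'),
\]
which is non-positive provided $E + E' \ll k^4 g_Y(m) \ty(m)/e(G_m)$.

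The main obstacle is the bookkeeping needed to verify that error claim. This step is entirely parallel to Claim~\ref{eq:boundonE} in Section~\ref{sec:Q}: each contribution to $E$ (and similarly $E'$) should turn out to be of size $n^{\gamma - 4 - \delta' + o(1)}$ for some $\delta' > \delta$; the overcount uses the bound $Y_S \le n^{\eps}\max\{1, \Ex{Y_S(G_0)}\} = n^{\eps + \max\{0, \gamma - 3\} + o(1)}$ from Proposition~\ref{prop:I}(4); and the choice $\delta = \min\{\gamma - 2, 1\}/10$ together with $n^{-\delta} \le g_Y(m)$ should guarantee the required domination uniformly in $m \le m_{*}$.
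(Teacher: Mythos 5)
Your route is the same as the paper's: the same decomposition of the one-step change via the quantities $Y_{e,f}(G_m)$ (your double sum over $K'\ni e$ and $e'\in E(K')\setminus\{e\}$ is exactly $\sum_f Y_{e,f}(G_m)Y_f(G_m)$), the same use of the refined bound $Y_e(G_m)\ge(1+g_Y(m)/2)\ty(m)$ on $\{i\le m<\psi'_i\}$, and the same expansion of the deterministic increment; the algebra of your leading terms is correct. The genuine gap is in your classification of the errors. When you replace each $Y_{e'}(G_m)$ by $(1\pm g_Y(m))\ty(m)$ and (implicitly) $Q(G_m)$ by $(1\pm g_Q(m))\tq(m)$ in the main term, the resulting contributions are of size
\[
\Theta\left(\big(g_Y(m)+g_Q(m)\big)\binom{k}{2}^2\,\frac{\ty(m)}{e(G_m)}\right),
\]
which is \emph{exactly the same order} as the negative drift $-\tfrac12\binom{k}{2}(3\binom{k}{2}+1)g_Y(m)\ty(m)/e(G_m)$ you end up with. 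They cannot be absorbed into an $E$ satisfying $E\ll k^4g_Y(m)\ty(m)/e(G_m)$, and your proposed verification (``each contribution to $E$ is of size $n^{\gamma-4-\delta'+o(1)}$ for some $\delta'>\delta$'') fails for them, since they have the same exponent as the drift term itself.

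The lemma survives only because the constants happen to work: keeping signs, the first-order loss is at most $(g_Y(m)+g_Q(m))\binom{k}{2}(\binom{k}{2}-1)\ty(m)/e(G_m)\le\tfrac65(1+o(1))\binom{k}{2}^2 g_Y(m)\ty(m)/e(G_m)$ (using $g_Q\le g_Y/5$), while the gain $\tfrac12\binom{k}{2}(\binom{k}{2}-1)+\binom{k}{2}(\binom{k}{2}+1)\approx\tfrac32\binom{k}{2}^2$ exceeds it. This constant comparison is precisely what the paper carries out in~\eqref{eq:errorgqtimes}, and it is why the paper remarks that using only the crude bound $Y_e(G_m)\ge(1-g_Y(m))\ty(m)$ would make the coefficient of $g_Y$ exceed $1$ and break the argument. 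So you must keep the first-order $g_Y$ and $g_Q$ factors attached to the main term with their signs and compare coefficients explicitly; only the genuinely lower-order pieces --- the overcount (handled via Proposition~\ref{prop:I}(4), as you describe) and the quadratic remainders --- can be discarded in the way you propose.
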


	\begin{lemma}\label{lem:Ychange} 
		For every $m \in \{0,\ldots,m_{*}-1\}$, the increments $X'_{m+1}-X'_{m}$ satisfy the following properties:
		\begin{enumerate}
			\item[$(i)$] $|X'_{m+1}-X'_{m}|\, =\, O(1)k^2n^{\delta}\max\{1,\Ex{Y_S(G_0)}\}$, where $|S|=3$; 
			\vspace*{2mm}
			\item[$(ii)$] 
			$\Ex{|X'_{m+1}-X'_{m}|\big|G_{m}}\, =\, O\left(\dfrac{k^4\ty(m)}{pn^2}\right)$.
		\end{enumerate}
	\end{lemma}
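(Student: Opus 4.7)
The plan is to split the increment $X'_{m+1}-X'_m$ into a random and a deterministic contribution. Since the process freezes at $\tau$, it suffices to consider $m<\tau$, in which case
\[ X'_{m+1} - X'_m \;=\; \bigl(Y_e(G_{m+1})-Y_e(G_m)\bigr) \;-\; \bigl((1+g_Y(m+1))\ty(m+1) - (1+g_Y(m))\ty(m)\bigr). \]
The deterministic term can be handled by a computation analogous to Claim~\ref{claim:change-Rm}: using $\ty(m)=\binom{k}{2}\tq(m)/e(G_m)$ together with the recursions for $\tq$ and $g_Y$, its absolute value is $O(k^4\ty(m)/(pn^2))$, which is already of the size claimed in part (ii) and (using $\ty(m)=n^{\gamma-2+o(1)}$) much smaller than the bound claimed in part (i).

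For the random part, a $k$-clique $K'\subseteq G_m\cup\{e\}$ through $e$ is destroyed at step $m+1$ precisely when it shares an edge with the removed clique $K$, so
\[ \bigl|Y_e(G_{m+1})-Y_e(G_m)\bigr| \;\le\; \sum_{f\in E(K)} N_{e,f}(G_m), \]
where $N_{e,f}(G_m)$ denotes the number of $k$-cliques of $G_m\cup\{e\}$ containing $\{e,f\}$. For part (i), observe that $N_{e,f}(G_m)\le Y_S(G_m)$ for any three-element set $S\subseteq e\cup f$ (when $f\ne e$; the solitary contribution from $f=e$ is controlled similarly). By monotonicity $Y_S(G_m)\le Y_S(G_0)$, and Proposition~\ref{prop:I}(4) gives $Y_S(G_0)\le n^\delta\max\{1,\Ex{Y_S(G_0)}\}$. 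Summing over the $\binom{k}{2}=O(k^2)$ edges of $K$ and absorbing the deterministic term yields (i).

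For part (ii), average the above bound over the uniformly random choice of $K\subseteq G_m$:
\[ \Ex{\bigl|Y_e(G_{m+1})-Y_e(G_m)\bigr| \bigm| G_m} \;\le\; \frac{1}{Q(G_m)}\sum_{K\subseteq G_m} \sum_{f\in E(K)} N_{e,f}(G_m). \]
Switching the order of summation, each edge $f\in E(G_m)$ lies in exactly $Y_f(G_m)\le 2\ty(m)$ cliques $K$ of $G_m$ (using $m<\tau$), while $\sum_{f\in E(G_m)} N_{e,f}(G_m) \le \binom{k}{2}\,Y_e(G_m)\le 2\binom{k}{2}\ty(m)$. Combined with $Q(G_m)\ge \tq(m)/2$ and $\ty(m)=\binom{k}{2}\tq(m)/e(G_m)$, this gives
\[ \Ex{\bigl|Y_e(G_{m+1})-Y_e(G_m)\bigr| \bigm| G_m} \;\le\; \frac{8\binom{k}{2}^2\ty(m)}{e(G_m)} \;=\; O\!\left(\frac{k^4\ty(m)}{pn^2}\right), \]
which together with the deterministic part completes (ii). The main bookkeeping obstacle is handling the case $|e\cup f|=4$ in the bound for $N_{e,f}$ (reducing it to a size-3 subset count) and confirming that the deterministic correction is always dominated; both are routine given Proposition~\ref{prop:I} and the definitions of $\tq,\ty,g_Y$.
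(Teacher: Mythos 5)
Your proposal is correct and follows essentially the same route as the paper: the deterministic drift is bounded by the analogue of Claim~\ref{claim:change-Rm} (this is Claim~\ref{claim:deltaty} in the paper), the worst-case one-step change is bounded by $\sum_f Y_{e,f}(G_m)\le\binom{k}{2}\max_{|S|=3}Y_S(G_0)\le k^2 n^\delta\max\{1,\Ex{Y_S(G_0)}\}$ via Proposition~\ref{prop:I}(4), and the conditional expectation is bounded by averaging over the removed clique $K$, switching the order of summation, and using $Y_f(G_m)\le 2\ty(m)$ and $Q(G_m)\ge\tq(m)/2$ on $\{m<\tau\}$. The only (harmless) difference is that you bound $\Ex{|Y_e(G_{m+1})-Y_e(G_m)|\,|\,G_m}$ directly by the overcounting sum rather than passing through Proposition~\ref{prop:changeY} and its error term $E_3'$, which if anything slightly streamlines the argument.
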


	\begin{proof}[Proof of Proposition~\ref{prop:Y}, assuming Lemmas~\ref{lem:Ysm} and~\ref{lem:Ychange}]
		
		As we discussed at the beginning of this section, we focus on showing that  $\pr{0<\tau=\tau_{Y_e}^{+}<m_{*}}\le n^{-3}$ for every $e \in K_n$. The proof that  $\pr{0<\tau=\tau_{Y_e}^{-}<m_{*}}\le n^{-3}$ is essentially identical.
		Our approach is similar to that of the previous section except we now use Freedman's inequality (Lemma~\ref{lem:Freedman}). 
		
		From now on, we fix $e \in K_n$ and suppose that the event $ \{0<\tau=\tau_{Y_e}^{+}<m_{*}\}$ occurs. 
		As ${\tau >0}$, $G_0$ satisfies all items (1)--(4) in Proposition~\ref{prop:I}.
		In particular, $Y_e(G_0) = (1\pm g_Y(0)/2)\ty(0)$, and hence there exists a step $i \in \{1,\ldots,\tau\}$ for which
		$Y_e(G_{i-1}) \le (1 + g_Y(i-1)/2)\ty(i-1)$ and $Y(G_{m}) > (1 + g_Y(m)/2)\ty(m)$ for all $m \in \{i,\ldots,\tau\}$.

		As $Y_e(G_i)$ and $Y_e(G_{i-1})$ are ``very close'' to each other, we have an upper bound on $Y_e(G_i)$ which is better than $Y_e(G_i) \le (1 + g_Y(i))\ty(i)$.
		Note that
		\begin{align}\label{eq:boundonYi}
			Y_e(G_{i}) \le Y_e(G_{i-1}) < \left(1 + \dfrac{g_Y(i-1)}{2}\right)\ty(i-1) \le  \left(1 + \dfrac{3g_Y(i)}{4}\right)\ty(i).
		\end{align}
		The last inequality follows from the fact that $g_Y(i) = g_Y(i-1)\big(1+n^{-2+o(1)}\big)$ and that $\ty(i)=\ty(i-1)\big(1-n^{-2+o(1)}\big)$.
		In particular, $X_i' \le -g_Y(i)\ty(i)/4$.
		As $X_{m_{*}}'=X_{\tau}'>0$, we obtain $X_{m_{*}}'-X_i' \ge g_Y(i)\ty(i)/4$.
		For each $i \in \{1,\ldots,m_{*}\}$, define
		\begin{align*}
			A_i' :=	\left \{ X_{m_{*}}'-X_i'\ge\dfrac{g_Y(i)\ty(i)}{4}\right \} \cap \left \{Y_e(G_i) > \left(1 + \dfrac{g_Y(i)}{2}\right)\ty(i) \text{ and } \psi_i' = m_{*}\right \}.
		\end{align*}
		From all the discussion above, it follows that
		\begin{align}\label{eq:union-A'}
			\{0<\tau=\tau_Y^{+}<m_{*}\} \se \bigcup  _{i=1}^{m_{*}}	A_i'.
		\end{align}	
		
		We now fix some $i \in [m_{*}]$ and bound the probability of the event $A_i'$.
		We encode this event as a deviation event of a supermartingale.
		As the process freezes at the stopping time $\tau$, we have $\psi_i' = m_{*} $ on the event $A_i'$. By Lemmas~\ref{lem:Ysm} and~\ref{lem:Ychange}, $(X'_m)_{m\ge i}$ is a supermartingale whose increments have quadratic variation bounded by
		\begin{align*}
			\Ex{(X'_{m+1}-X'_{m})^2|G_{m}}\, &\le\, \|X'_{m+1}-X'_{m}\|_{\infty}\Ex{|X'_{m+1}-X'_{m}| \, \big|G_{m}}\\
			& =\, \frac{O(1)k^6n^{\delta}\max\{1,\Ex{Y_S(G_0)}\}\ty(i)}{pn^2},
		\end{align*}
		for every $m\ge i$, where $S \se [n]$ is an arbitrary set of size three.
		Summing over $m_{*}-i\le m_{*}\le pn^2$ terms we obtain
		\begin{align}\label{eq:V} \sum_{m=i}^{m_{*}-1}\Ex{(X'_{m+1}-X'_{m})^2|G_{m}}\, =\, O(1)k^6n^{\delta}\max\{1,\Ex{Y_S(G_0)}\}\ty(i).
		\end{align}

		Set $W = k^6n^{\delta}\max\{1,\Ex{Y_S(G_0)}\}\ty(i)$.
		By Lemma~\ref{lem:Ychange}, we have $|X_{m+1}'-X_{m}'| \cdot g_Y(i) \ty(i) = o\big(W\big)$ for every $m\ge i$, and hence by Freedman's inequality we have
		\begin{align}\label{eq:FreedZ}
			\Pr\big( A_i'\big) \le
			\exp \left (-\Omega \left (\dfrac{(g_Y(i) \ty(i))^2}{W}\right) \right),
		\end{align}
		It remains to compare the expressions $(g_Y(i) \ty(i))^2$ and $W$.
		As $g_Y(i)\ge g_Y(0)>n^{-\delta}$, we have
		\begin{align}\label{eq:first-ratio-freedman}
			\dfrac{(g_Y(i) \ty(i))^2}{W}= \Omega \left (  \dfrac{\ty(i)}{k^6n^{3\delta}\max\{1,\Ex{Y_S(G_0)}\}}\right).
		\end{align}

		Now, we claim that  $\ty(i) \ge n^{4\delta} \max\{1,\Ex{Y_S(G_0)}\}$.
		Indeed, as  $i \le m_{*}$, we have
		\begin{align}
			\ty(i) \, = \,\Omega \left ( \dfrac{k^2\tq(0)}{n^{2+4\delta}}\right)
			\, = \, \Omega \left ( n^{\gamma-2-4\delta+o(1)}\right).\label{eq:2nd-ratio-freedman}
			%\qquad \text{and} \qquad
			%\Ex \big(Y_S(G_0)\big)= O \left(\dfrac{k^3\tq(0)}{n^3}\right),
		\end{align}
		As $\max\{1,\Ex{Y_S(G_0)}\}=n^{\max\{0,\gamma-3\}+o(1)}$ and $\delta=\min\{\gamma-2,1\}/10$, it follows from~\eqref{eq:2nd-ratio-freedman} that $\ty(i) \ge n^{4\delta} \max\{1,\Ex{Y_S(G_0)}\}$.
		By combining this with~\eqref{eq:first-ratio-freedman} we conclude that
		\begin{align}\label{eq:boundFreed}
			\dfrac{(g_Y(i) \ty(i))^2}{W}= \Omega \left (k^{-6}n^{\delta}\right).
		\end{align}
		From~\eqref{eq:FreedZ} and~\eqref{eq:boundFreed}, it follows that
		\begin{align}\label{eq:bound-Ai'}
			\Pr\big( A_i'\big) =
			\exp \left(-\Omega\left(k^{-6}n^{\delta}\right)\right) \le n^{-5}.
		\end{align}
		
		By the union bound over $0<i<m_{*}$ and $e \in E(K_n)$, it follows from~\eqref{eq:union-A'} and~\eqref{eq:bound-Ai'} that
		\begin{align*}
			\Pr \left( \bigcup_{e \in E(K_n)} \big\{ 0< \tau=\tau_{Y_e}^{+}<m_{*} \big\} \right) \le n^{-1}.
		\end{align*}
		This finishes the proof.
		
	\end{proof}

	To prove Lemmas~\ref{lem:Ysm} and~\ref{lem:Ychange}, we first need a bound on the increments which compose the deterministic part of $(X'_m)_{m\ge 0}$.
	To do so, it is useful to define the following error functions:
	\begin{align}\label{errorE1E2}
		E_1' := \dfrac{k^6\ty(m)}{e(G_{m+1})^2} \qquad \text{and} \qquad E_2' := \dfrac{k^8g_Y(m)\ty(m)}{e(G_{m+1})^2}.
	\end{align}
	We also set 
	\begin{align}\label{eq:R_m}
		R_m' := g_Y(m+1)\ty(m+1)-g_Y(m)\ty(m).
	\end{align}
	
	\begin{claim}\label{claim:deltaty}
		For all $m \in \{0,\ldots,m_{*}-1\}$, we have
		\begin{align*}
			\ty(m+1)-\ty(m)=-\binom{k}{2} \left(\binom{k}{2}-1\right)  \dfrac{\ty(m)}{e(G_{m})}\, \pm \, E_1'.
		\end{align*}
		and
		\begin{align*}
			R'_m = g_Y(m) \binom{k}{2} \left(\binom{k}{2}-1\right) \dfrac{\ty(m)}{e(G_m)}\, \pm \, E_2'.
		\end{align*}
	\end{claim}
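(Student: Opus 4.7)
The plan is to prove both identities by direct algebraic manipulation, using nothing more than the recursive definitions $\tq(m+1)=\tq(m)\bigl(1-\binom{k}{2}^2/e(G_m)\bigr)$, $g_Y(m+1)=g_Y(m)\bigl(1+2\binom{k}{2}^2/e(G_m)\bigr)$, and the fact that $e(G_{m+1})=e(G_m)-\binom{k}{2}$. The key observation that drives both calculations is the identity
\[
\frac{1}{e(G_{m+1})}\, =\, \frac{1}{e(G_m)}\, +\, \frac{\binom{k}{2}}{e(G_m)\,e(G_{m+1})}\, ,
\]
which converts a denominator of $e(G_{m+1})$ into one of $e(G_m)$ at the price of an error of order $k^2/e(G_{m+1})^2$.

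For the first identity, I would take the ratio
\[
\frac{\ty(m+1)}{\ty(m)}\, =\, \frac{\tq(m+1)}{\tq(m)}\cdot \frac{e(G_m)}{e(G_{m+1})}\, =\, \frac{e(G_m)-\binom{k}{2}^2}{e(G_m)-\binom{k}{2}}\, ,
\]
subtract $1$, and simplify the numerator to get $\ty(m+1)-\ty(m)=-\binom{k}{2}(\binom{k}{2}-1)\ty(m)/e(G_{m+1})$. Applying the displayed identity above then splits this into the advertised main term $-\binom{k}{2}(\binom{k}{2}-1)\ty(m)/e(G_m)$ plus a remainder bounded (using $e(G_m)\ge e(G_{m+1})$) by $k^6\ty(m)/e(G_{m+1})^2=E_1'$.

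For the second identity, I would multiply the ratio above by $g_Y(m+1)/g_Y(m)=1+2\binom{k}{2}^2/e(G_m)$ and expand. Writing $a=\binom{k}{2}$ and $e=e(G_m)$ for brevity, the ratio simplifies to $1+(a+a^2)/(e-a)-2a^4/(e(e-a))$, so subtracting $1$ and multiplying through by $g_Y(m)\ty(m)$ gives
\[
R'_m\, =\, g_Y(m)\ty(m)\left[\frac{a^2+a}{e(G_{m+1})}\, -\, \frac{2a^4}{e(G_m)\,e(G_{m+1})}\right]\, .
\]
Applying the key identity once more to rewrite $1/e(G_{m+1})$ as $1/e(G_m)$ in the first bracketed term produces the claimed main term $g_Y(m)(\binom{k}{2}^2+\binom{k}{2})\ty(m)/e(G_m)$, while all remaining pieces are of order $k^8 g_Y(m)\ty(m)/\bigl(e(G_m)e(G_{m+1})\bigr)$, which is dominated by $E_2'$.

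Both identities are entirely algebraic and no real obstacle arises; the only thing to watch is that the implicit numerical constants in the definitions of $E_1'$ and $E_2'$ absorb the coefficients produced by the expansions. This is fine because $\binom{k}{2}\ll e(G_{m+1})$ for all $m\le m_{*}$, so the ratio $e(G_m)/e(G_{m+1})$ is $1+o(1)$ and every auxiliary term is dominated by the displayed $k^6$ or $k^8$ bounds with plenty of room to spare.
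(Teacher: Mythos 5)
Your proposal is correct and follows essentially the same route as the paper: both arguments are pure algebra from the recursions for $\tq$, $g_Y$ and the identity $e(G_{m+1})^{-1}-e(G_m)^{-1}=\binom{k}{2}e(G_m)^{-1}e(G_{m+1})^{-1}$, yielding the exact intermediate identity $\ty(m+1)-\ty(m)=-\binom{k}{2}(\binom{k}{2}-1)\ty(m)/e(G_{m+1})$ before converting the denominator. The only (cosmetic) difference is that you compute the full ratio $g_Y(m+1)\ty(m+1)/(g_Y(m)\ty(m))$ in one go, whereas the paper splits $R_m'$ as $g_Y(m)(\ty(m+1)-\ty(m))+(g_Y(m+1)-g_Y(m))\ty(m+1)$ and treats the two pieces separately; your coefficient bookkeeping and the absorption of the remainders into $E_1'$ and $E_2'$ both check out.
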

	\begin{proof}
		First, let us analyse the one-step difference $\ty(m+1)- \ty(m)$. Recall from~\eqref{def:tq} and~\eqref{def:ty} that
		\begin{align*}
			\tq(m+1)=\left(1-\dfrac{\binom{k}{2}^2}{e(G_m)}\right)\tq(m)
			\qquad \text{and} \qquad
			\ty(m)=\binom{k}{2}\dfrac{\tq(m)}{e(G_m)}.
		\end{align*}
		From this, it follows that
		\begin{align}\label{eq:deltaty}
			\ty(m+1)- \ty(m) \, & = \, \binom{k}{2}\left(\dfrac{\tq(m+1)}{e(G_{m+1})}-\dfrac{\tq(m)}{e(G_m)}\right) \nonumber\\
			& =\, \binom{k}{2}\tq(m)\left(\dfrac{1}{e(G_{m+1})}\left(1-\dfrac{\binom{k}{2}^2}{e(G_m)}\right)-\dfrac{1}{e(G_m)}\right).
		\end{align}
		As $e(G_{m+1})=e(G_m)-\binom{k}{2}$, we have $e(G_{m+1})^{-1}-e(G_m)^{-1}=\binom{k}{2}e(G_{m})^{-1}e(G_{m+1})^{-1}$.
		Thus, it follows from~\eqref{eq:deltaty} that
		\begin{align*}
			\ty(m+1)- \ty(m) \,  = \, -\binom{k}{2}^2 \left(\binom{k}{2}-1\right) \dfrac{\tq(m)}{e(G_{m+1})e(G_m)} \, = \, -\binom{k}{2} \left(\binom{k}{2}-1\right)  \dfrac{\ty(m)}{e(G_{m+1})}.
		\end{align*}
		As $e(G_{m+1})^{-1}=e(G_{m})^{-1}\pm k^2e(G_{m+1})^{-2}$ , we obtain 
		\begin{align}\label{eq:ty-difference}
			\ty(m+1)- \ty(m) = -\binom{k}{2} \left(\binom{k}{2}-1\right)  \dfrac{\ty(m)}{e(G_{m})} \, \pm \, E_1'.
		\end{align}
		This proves the first part of the claim.
		
		Now, let us analyse $R_m'$.
		Note that
		\begin{align}\label{eq:derivative}
			R_m' = g_Y(m)(\ty(m+1)-\ty(m))+ (g_Y(m+1)-g_Y(m)) \ty(m+1).
		\end{align}
		We shall analyse each of the terms in the right-hand side of~\eqref{eq:derivative} separately.
		From~\eqref{eq:ty-difference}, it follows that
		\begin{align}\label{eq:g_Ydeltaty}
			g_Y(m) (\ty(m+1)- \ty(m)) = -\binom{k}{2} \left(\binom{k}{2}-1\right)  \dfrac{g_Y(m)\ty(m)}{e(G_{m})}\, \pm \,
			\dfrac{E_2'}{2}.
		\end{align}
		Now it remains to bound the term $(g_Y(m+1)- g_Y(m))\ty(m+1)$.
		From the definition of $g_Y$ in~\eqref{eq:gY}, we have
		\begin{align}\label{eq:deltag_Y}
			g_Y(m+1)- g_Y(m) = 2\binom{k}{2}^2\dfrac{g_Y(m)}{e(G_m)}.
		\end{align}
		Moreover, from the definitions of $\tq$ and $\ty$ (see~\eqref{def:tq} and~\eqref{def:ty} in Section~\ref{sec:over}) it follows that
		\begin{align}\label{eq:tym+1}
			\ty(m+1)= \binom{k}{2}\left(1-\dfrac{\binom{k}{2}^2}{e(G_{m})}\right) \dfrac{\tq(m)}{e(G_{m+1})}.
		\end{align}
		By combining~\eqref{eq:deltag_Y} and~\eqref{eq:tym+1}, we obtain
		\begin{align*}
			(g_Y(m+1)-g_Y(m))\ty(m+1) = 2\binom{k}{2}^2 \left(1-\dfrac{\binom{k}{2}^2}{e(G_{m})}\right) \dfrac{g_Y(m)\ty(m)}{e(G_{m+1})}.
		\end{align*}
		Again because $e(G_{m+1})^{-1}=e(G_{m})^{-1}\pm k^2e(G_{m+1})^{-2}$, we obtain
		\begin{align}\label{eq:deltagyty}
			(g_Y(m+1)-g_Y(m)) \ty(m+1) = 2\binom{k}{2}^2 \dfrac{(g_Y\ty)(m)}{e(G_{m})
			} \, \pm \, \dfrac{E_2'}{2}.
		\end{align}
		The claim follows from combining equations~\eqref{eq:derivative},~\eqref{eq:g_Ydeltaty} and \eqref{eq:deltagyty}.
	\end{proof}

	For each ordered pair of distinct edges $(e,f) \in E(K_n)^2$, 
	define $Y_{e,f}(G_m)$ to be the number of copies of $k$-cliques in $G_m \cup \{e\}$ containing $e$ and $f$ (observe that order matters).
	Our next proposition expresses $\Ex{ Y_e(G_{m+1})-Y_e(G_{m})|G_m}$ in terms of these variables.
	Define
	\begin{align}\label{errorE3}
		E_3' := \dfrac{2\ty(m)k^3n^{\delta}}{\tq(m)}\cdot\max\{1,\Ex{Y_S(G_0)}\}.
	\end{align}
	
	\begin{prop}\label{prop:changeY} Suppose that $\tau >0$. Then, for all $m \in \{0,\ldots,\tau-1\}$, we have
		\begin{align*}
			\Ex{Y_{e}(G_{m+1})-Y_{e}(G_{m})\big|G_{m}}\, =\, -\dfrac{1}{Q(G_{m})}{\sum}_{f\in E(G_{m})\setminus\{e\}}Y_{e,f}(G_{m})Y_{f}(G_{m})\, \pm\, E_3'.
		\end{align*} 
	\end{prop}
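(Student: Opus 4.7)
The plan is to expand the conditional change in $Y_e$ as a count of destroyed cliques, reindex the resulting double sum over a shared edge $f$, and control the overcount produced by pairs of cliques sharing many edges, using Proposition~\ref{prop:refined}$(4)$, which is available since $\tau > 0$.

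First, when the uniformly random $k$-clique $K \subseteq G_m$ is removed, a $k$-clique $K' \ni e$ with $K' \subseteq G_m \cup \{e\}$ fails to survive in $G_{m+1} \cup \{e\}$ precisely when some edge of $K'$ other than $e$ lies in $E(K)$, i.e., when $(E(K') \cap E(K)) \setminus \{e\} \neq \emptyset$ (the edge $e$ itself does not destroy $K'$ because it is reinstated by the union $\cup \{e\}$). Writing $D(K,e)$ for the number of such destroyed cliques and using that $K$ is chosen uniformly among the $Q(G_m)$ cliques of $G_m$,
\[
\Ex{Y_e(G_{m+1}) - Y_e(G_m) \mid G_m} \;=\; -\frac{1}{Q(G_m)} \sum_{K \subseteq G_m} D(K,e).
\]

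Next, bound $\mathbf{1}[(E(K') \cap E(K)) \setminus \{e\} \neq \emptyset] \le \sum_{f \in E(G_m) \setminus \{e\}} \mathbf{1}[f \in E(K) \cap E(K')]$, with equality unless $|(E(K') \cap E(K)) \setminus \{e\}| \ge 2$. Summing over $K$ and swapping the orders of summation,
\[
\sum_{K \subseteq G_m} D(K,e) \;=\; \sum_{f \in E(G_m) \setminus \{e\}} Y_f(G_m)\, Y_{e,f}(G_m) \;-\; \mathcal{E},
\]
where $\mathcal{E} \ge 0$ collects the overcount contributions from pairs $(K, K')$ sharing at least two edges off of $e$. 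Dividing by $Q(G_m)$ recovers the advertised main term, so it suffices to show $\mathcal{E}/Q(G_m) \le E_3'$.

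Any overcounted pair has $|E(K) \cap E(K')| \ge 2$, hence $|V(K) \cap V(K')| \ge 3$; crudely bounding the per-pair contribution by $\binom{k}{2}$, and enumerating each such pair via a $3$-subset $S \subseteq V(K) \cap V(K')$, we obtain
\[
\mathcal{E} \;\le\; \binom{k}{2} \sum_{|S|=3} Y_S(G_m) \cdot \big|\{K' : K' \supseteq S \cup V(e),\; e \in K',\; K' \subseteq G_m \cup \{e\}\}\big|.
\]
Since $\tau > 0$, Proposition~\ref{prop:refined}$(4)$ gives $Y_S(G_m) \le Y_S(G_0) \le n^\delta \max\{1, \Ex{Y_S(G_0)}\}$ for every $3$-set $S$, and by monotonicity $Y_T \le Y_{T'}$ for $3$-subsets $T' \subseteq T$ the second factor admits the analogous bound (applied to the graph $G_0 \cup \{e\}$, which is $G(n,p)$ plus a single deterministic edge, so the concentration result transfers). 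After dividing by $Q(G_m) \approx \tq(m)$ and using $\ty(m)/\tq(m) = \binom{k}{2}/e(G_m)$, the resulting estimate simplifies exactly to the prefactor $E_3'$.

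The main obstacle will be handling the three sub-cases $|V(e) \cap S| \in \{0, 1, 2\}$ uniformly: when $V(e) \not\subseteq S$ the relevant $k$-cliques are constrained to contain the $4$- or $5$-vertex set $S \cup V(e)$, so one must check that the smaller expected counts on those larger sets combine with the extra freedom in choosing $S$ (and absorb the boundary situation $e \in E(G_m)$, where a phantom $f=e$ term would otherwise arise) to yield the same bound $E_3'$ across all cases, with room to spare.
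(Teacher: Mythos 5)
Your derivation of the main term is correct and is the same as the paper's: the removed clique $K$ is uniform among the $Q(G_m)$ cliques of $G_m$, a clique $K'\ni e$ is destroyed exactly when it shares an edge $f\neq e$ with $K$, and reindexing over the shared edge gives $\sum_{f}Y_{e,f}(G_m)Y_f(G_m)$ with a nonnegative overcount from pairs sharing at least two edges off $e$. The gap is in your control of that overcount $\mathcal{E}$. You enumerate the overcounted pairs by a $3$-set $S\subseteq V(K)\cap V(K')$ ranging over \emph{all} $\binom{n}{3}$ triples of $[n]$, and then bound \emph{both} factors $Y_S(G_m)$ and the number of $K'\supseteq S\cup V(e)$ by their worst-case values $n^{\delta}\max\{1,\Ex{Y_S(G_0)}\}$. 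That yields roughly $\binom{k}{2}\binom{n}{3}n^{2\delta}\max\{1,\Ex{Y_S(G_0)}\}^2/Q(G_m)$, which for $2<\gamma<3$ is of order $n^{3+2\delta-\gamma}\gg 1$, while $E_3'$ as defined in \eqref{errorE3} is $O(k^5n^{\delta-2}/p)\ll 1$ in that regime; so the estimate does not "simplify exactly to $E_3'$" — it misses by a polynomial factor. Your closing paragraph confirms the problem: organizing the count by arbitrary $S$ forces you to control $Y_T$ for sets $T=S\cup V(e)$ of size $4$ and $5$, which Proposition~\ref{prop:refined}$(4)$ (and the definition of the stopping time $\tau$) does not provide.

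The repair, which is what the paper does, is to enumerate the overcounted pairs in the other order: first choose $K'\ni e$ — there are $Y_e(G_m)$ of these, and since $m<\tau\le\tau_{Y_e}$ you get the crucial bound $Y_e(G_m)\le(1+g_Y(m))\ty(m)\le 2\ty(m)$, which is where the factor $\ty(m)/\tq(m)$ in $E_3'$ comes from and which you never invoke — then choose a $3$-subset $S\subseteq V(K')$ ($\binom{k}{3}$ ways), and bound the number of $K\supseteq S$ by $Y_S(G_m)\le Y_S(G_0)\le n^{\delta}\max\{1,\Ex{Y_S(G_0)}\}$. This uses only $|S|=3$, gives $\mathcal{E}\le 2\ty(m)\binom{k}{3}n^{\delta}\max\{1,\Ex{Y_S(G_0)}\}$, and after dividing by $Q(G_m)\ge(1-g_Q(m))\tq(m)$ lands inside $E_3'$. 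Note also that your uniform per-pair factor $\binom{k}{2}$ is unnecessary and would by itself inflate the bound by $k^2$ beyond $E_3'$: a pair with $v=|V(K)\cap V(K')|\ge 3$ contributes at most $\binom{v}{2}$ to the overcount while being hit $\binom{v}{3}$ times by the $S$-enumeration, so the per-incidence contribution is at most $3$.
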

	
	\begin{proof}
		In order to analyse the one-step change $Y_{e}(G_{m+1})-Y_{e}(G_{m})$, 
		we define $P_e(m)$ to be the number of pairs $(K^{1},K^{2})$ satisfying
		\begin{enumerate}
			\item [$1.$] $K^{1}$ is a $k$-clique in $G_m$;
			\item [$2.$] $K^{2}$ is a $k$-clique in $G_m \cup \{e\}$ containing $e$ and at least one edge of $K^{1}\setminus \{e\}$.
		\end{enumerate}
		At step $m+1$ of the process, 
		recall that a $k$-clique is uniformly chosen among those in $G_{m}$.
		This implies that the conditional expectation $\Ex{Y_{e}(G_{m+1})-Y_{e}(G_{m}) |G_m}$ is precisely
		\begin{align}\label{eq:ExY}
			\Ex{Y_{e}(G_{m+1})-Y_{e}(G_{m})|G_m}= -\dfrac{P_e(m)}{Q(G_m)}.
		\end{align}
		
		Let us give an upper bound on $P_e(m)$. From items $1$ and $2$ we can easily see that
		\begin{align}\label{ineq:Pm}
			P_e(m)\le \sum \limits_{K} \sum \limits_{f \in K \setminus \{e\}} Y_{e,f}(G_m) = \sum \limits_{f \in E(G_m)\setminus \{e\}} Y_{e,f}(G_{m}) Y_f(G_m),
		\end{align}
		where the first summation is over all $k$-cliques in $G_m$.
		We might not have an equality above due to a possible slight overcount.
		Those pairs of cliques $(K^1,K^2)$ where $K^2$ contains at least $3$ vertices in $K^1$ are counted more than once in the sums above. 
		But, we can easily see that the overcount does not exceed 
		\begin{align}\label{eq:overcount}
			Y_e(G_{m})\binom{k}{3}\max\{Y_{S}(G_{m}):|S|=3\}.
		\end{align}
		As $m<\tau$, we have $Y_e(G_m)\le 2\ty(m)$.
		Moreover, since $\tau >0$, $G_0$ satisfies all items (1)--(4) in Proposition~\ref{prop:I}.
		In particular, we have $Y_{S}(G_{m})\le Y_{S}(G_{0}) \le n^{\delta}\max\{1,\Ex{Y_S(G_0)}\}$ for all $S \se [n]$ such that $|S|=3$.
		Thus, the quantity in~\eqref{eq:overcount} is upper bounded by $$\ty(m)k^3n^{\delta}\max\{1,\Ex{Y_S(G_0)}\}$$
        for all $S \se [n]$ such that $|S|=3$.
		From~\eqref{eq:ExY} and~\eqref{ineq:Pm}, it follows that
		\begin{align*}
			\Ex{Y_e(G_{m+1})-Y_e(G_m)\big|G_m} = -\dfrac{1}{Q(G_m)}\sum \limits_{f \in E(G_m)\setminus \{e\}} Y_{e,f}(G_{m})Y_f(G_m)\pm E_3'.
		\end{align*}
		This completes the proof.
	\end{proof}
	
	We are now ready to prove Lemmas~\ref{lem:Ysm} and~\ref{lem:Ychange}.

	\begin{proof}[Proof of Lemma~\ref{lem:Ysm}]
		We observe that the result holds trivially if either $\psi'_i = i$ or $\tau = 0$. We therefore assume that $\psi_i' > i$ and $\tau > 0$ throughout the proof.
		
		Fix $m \in \{i,\ldots, \psi_i'-1\}$. 
		As the increment $X_{m+1}'-X_{m}'$ is identically $0$ if $m \ge \tau$, we may assume that $m < \tau$ and so the increment $X_{m+1}'-X_{m}'$ is given by
		\[
		\Big(Y_e(G_{m+1})\, -\, Y_e(G_{m})\Big)\, -\,  \Big((1+g_Y(m+1))\ty(m+1)\, -\, (1+g_Y(m))\ty(m)\Big).
		\]
		Our aim is to prove that $\Ex{X'_{m+1}-X'_{m}|G_{m}}\le 0$,
		which is equivalent to
		\eq{suffY}
		\mathbb{E}\Big[Y_e(G_{m+1})\, -\, Y_e(G_{m})\Big| G_{m}\Big]\, -\,  \Big((1+g_Y(m+1))\ty(m+1)\, -\, (1+g_Y(m))\ty(m)\Big)\, \le 0\, .
		\eqe
		
		In order to analyse the one-step change $Y_e(G_{m+1})-Y_e(G_{m})$, we first observe that
		\begin{align}\label{eq:sum-yef}
			\sum \limits_{f \in G_m \setminus \{e\}} Y_{e,f}(G_m) =
			\sum \limits_{f \in G_m \setminus \{e\}} \sum \limits_{ K \ni e,f } 1_{\{K \subseteq G_m \cup \{e\}\}} = \left(\binom{k}{2}-1\right)Y_e(G_m).
		\end{align}
		Since  $m < \tau$, we have $Y_f(G_m) \ge (1-g_Y(m))\ty(m)$ for all $f \in E(G_m)$.
		Moreover, since $\tau >0$, from Proposition~\ref{prop:changeY} and~\eqref{eq:sum-yef}, it follows that
		\begin{align}\label{eq:conditional-ye-1}
			\Ex{Y_e(G_{m+1})-Y_e(G_m)\big|G_{m}}\, \le \, -(1-g_Y(m))\left(\binom{k}{2}-1\right)\dfrac{\ty(m)Y_e(G_m)}{Q(G_{m})}
			\, \pm\, E_3'.
		\end{align}
		As $Q(G_{m})\le (1+g_Q(m))\tq(m)$ and $\ty(m)=\binom{k}{2}\tq(m)/e(G_m)$, it follows from~\eqref{eq:conditional-ye-1} that $\Ex{Y_e(G_{m+1})-Y_e(G_m)\big|G_{m}}$ is at most
		\begin{align}\label{eq:expdeltaY}
			-(1-g_Y(m)-g_Q(m))\left(\binom{k}{2}^2-\binom{k}{2}\right)\dfrac{Y_e(G_m)}{e(G_{m})}
			\, \pm\, E_3'.
		\end{align}

		As  $i \le m < \psi'_i$, we also have
		$Y_e(G_{m})\ge (1+g_Y(m)/2)\ty(m)$.
		Now, we would like to replace $Y_e(G_m)$ in the right-hand side of~\eqref{eq:expdeltaY} by this lower bound.
		Before that, let us bound the cumulative error coming from $g_Q$ and $g_Y$. 
		As $g_Q(m)\le g_Y(m)/5$, we have
		\begin{align}\label{eq:errorgqtimes}
			\big(1-g_Q(m)-g_Y(m)\big)\big(1+g_{Y}(m)/2\big)&\ge \big(1-6g_Y(m)/5\big)\big(1+g_{Y}(m)/2\big)\nonumber\\
			& = 1-7g_Y(m)/10-3g_Y(m)^2/5\nonumber\\
			& \ge 1-3g_Y(m)/4.
		\end{align}
		Finally, by replacing $Y_e(G_m)$ by $(1+g_Y(m)/2)\ty(m)$ in the right-hand side of~\eqref{eq:expdeltaY} and using~\eqref{eq:errorgqtimes}, we obtain
		\begin{align}\label{eq:deltaexY}
			\Ex{Y_e(G_{m+1})-Y_e(G_m)\big|G_{m}}\, \le \, -(1-3g_Y(m)/4)\left(\binom{k}{2}^2-\binom{k}{2}\right)\dfrac{\ty(m)}{e(G_{m})}
			\, \pm\, E_3'.
		\end{align}

		We remark that this is why we need to use the bound $Y_e(G_m)\ge(1+g_Y(m)/2)\ty(m)$.
		If we only use the crude bound $Y_e(G_m)\ge(1-g_Y(m))\ty(m)$, the coefficient of $g_Y(m)$ would be bigger than $1$.
		This would not be enough to show that $(X'_m: i \le m \le \psi'_i)$ is a supermartingale.
		
		By combining Claim~\ref{claim:deltaty} with~\eqref{eq:deltaexY}, we obtain
		\begin{align}\label{expdeltaZ}
			\Ex{X_{m+1}'-X_m'\big|G_{m}}\, \le \, -\dfrac{g_Y(m)}{4}\left(\binom{k}{2}^2+\binom{k}{2}\right)\dfrac{\ty(m)}{e(G_{m})}
			\, \pm\, E',
		\end{align}
		where $E' = E_1'+E_2'+E_3'$.
		Now we only need to bound the sum of the errors $E':=E_1'+E_2'+E_3'$.
		Recall the definition of $E_1'$ and $E_2'$ in~\eqref{errorE1E2} and the definition of $E_3'$ in~\eqref{errorE3}.

	\begin{claim}\label{eq:boundonE'}
		$E' \ll g_Y(m)\ty(m)/e(G_m).$
	\end{claim}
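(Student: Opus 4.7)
The plan is to bound each of $E_1'$, $E_2'$, $E_3'$ individually against the target $g_Y(m)\ty(m)/e(G_m)$, using that for $m\le m_{*}$ we have $e(G_m), e(G_{m+1}) = (1+o(1))pn^2/2 = n^{2+o(1)}$, together with $k=O(\log n)$ and the lower bound $g_Y(m)\ge n^{-\delta}$ from \eqref{eq:boundongQgY}.

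The terms $E_1'$ and $E_2'$ are straightforward. For $E_1' = k^6\ty(m)/e(G_{m+1})^2$, dividing by $g_Y(m)\ty(m)/e(G_m)$ gives roughly $k^6/(g_Y(m)\,e(G_m))\le k^6 n^{\delta}/n^{2+o(1)} = n^{-2+o(1)}$, which is $o(1)$. For $E_2' = k^8 g_Y(m)\ty(m)/e(G_{m+1})^2$, the $g_Y(m)\ty(m)$ factors cancel and one obtains a ratio of $k^8/e(G_m) = n^{-2+o(1)}$.

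The main obstacle is $E_3'$, where we need a lower bound on $g_Y(m)\tq(m)$ mirroring the one derived in the proof of Claim~\ref{eq:boundonE}. I would use the product definitions of $g_Y$ and $\tq$ to write
\[
g_Y(m)\tq(m) \;=\; 10\,n^{-\delta}\prod_{i=0}^{m-1}\left(1+\frac{2\binom{k}{2}^2}{e(G_i)}\right)\left(1-\frac{\binom{k}{2}^2}{e(G_i)}\right)\tq(0),
\]
and apply $(1+2x)(1-x)\ge 1-3x^2$ with $x=\binom{k}{2}^2/e(G_i)$. Using $e(G_i)\ge pn^2/4$ for $i\le m_{*}$ and $m\le m_{*}\le \delta pn^2\log n/(4k^4)$ as in Claim~\ref{eq:boundonE}, the product is at least $\exp(-O(k^8 m/(p^2 n^4))) = n^{o(1)}$, so
\[
g_Y(m)\tq(m)\;\ge\; n^{-\delta+o(1)}\tq(0)\;=\;n^{\gamma-\delta+o(1)}.
\]
Since $\ty(m)/\tq(m) = \binom{k}{2}/e(G_m)$, we can rewrite $E_3' = 2\binom{k}{2}k^3 n^{\delta}\max\{1,\Ex{Y_S(G_0)}\}/e(G_m)$, and the target becomes $g_Y(m)\binom{k}{2}\tq(m)/e(G_m)^2$. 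After canceling $\binom{k}{2}/e(G_m)$ from both, the inequality $E_3'\ll g_Y(m)\ty(m)/e(G_m)$ reduces to
\[
k^3 n^{\delta+o(1)}\max\{1,\Ex{Y_S(G_0)}\}\,e(G_m)\;\ll\; g_Y(m)\tq(m).
\]
Using $e(G_m)=n^{2+o(1)}$, $\max\{1,\Ex{Y_S(G_0)}\}=n^{\max\{0,\gamma-3\}+o(1)}$, and the lower bound above, this amounts to verifying
\[
\delta \,+\, 2 \,+\, \max\{0,\gamma-3\} \;<\; \gamma-\delta,
\]
i.e.\ $2\delta<\gamma-2-\max\{0,\gamma-3\}=\min\{\gamma-2,1\}$, which holds by the definition $\delta=\min\{\gamma-2,1\}/10$ in~\eqref{eq:delta}. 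This completes the bound on $E_3'$, and hence on $E'=E_1'+E_2'+E_3'$.
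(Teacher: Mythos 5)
Your proposal is correct and follows essentially the same route as the paper: the bounds on $E_1'$ and $E_2'$ are the same direct order-of-magnitude comparisons, and for $E_3'$ you derive the lower bound $g_Y(m)\tq(m)\ge n^{-\delta+o(1)}\tq(0)$ by the same product/exponential estimate the paper uses (the paper phrases it via $g_Q(m)\tq(m)\ge n^{-\delta}\tq(0)$ and then uses $g_Q\le g_Y$, but the computation is identical), and the final exponent comparison $2\delta<\min\{\gamma-2,1\}$ matches the paper's. The only nitpick is that $e(G_m)$ for $m\le m_*$ is only guaranteed to be at least $(1+o(1))pn^2/4$ rather than $(1+o(1))pn^2/2$, but since you only use $e(G_m)=n^{2+o(1)}$ this is immaterial.
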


	\begin{proof}
		From the definitions of the error functions $E_1'$ and $E_2'$ in~\eqref{errorE1E2}, we can easily see that
		\begin{align}\label{eq:e1ande2prime}
			E_1'+E_2' \ll \dfrac{g_Y(m)\ty(m)}{e(G_m)}.
		\end{align}

		For $E_3'$, recall that
		\begin{align}\label{recap:E3}
			E_3' = \dfrac{2\ty(m)k^3n^{\delta}}{\tq(m)}\cdot\max\{1,\Ex{Y_S(G_0)}\},
		\end{align}
		where $S$ is a set of size three.
		As $g_Q(m)\tq(m) \ge n^{-\delta}\tq(0)$ and $\delta = \min\{1,\gamma-2\}/10$, we have
		\begin{align}\label{ineq:boundE3}
			k^3n^{\delta}\max\{1,\Ex{Y_S(G_0)}\} \le n^{\delta+\max\{0,\gamma-3\}+o(1)} \ll n^{-\delta+\gamma-2+o(1)} \le \frac{g_Q(m)\tq(m)}{4e(G_m)}.
		\end{align}
		By multiplying the inequalities in~\eqref{ineq:boundE3} by $\ty(m)/\tq(m)$ and using~\eqref{recap:E3}, we obtain
		\begin{align}\label{eq:e3prime}
			E_3' \ll \dfrac{g_Q(m)\ty(m)}{e(G_m)}.
		\end{align}
		The claim follows by combining~\eqref{eq:e1ande2prime} with~\eqref{eq:e3prime}.
	\end{proof}

		It follows from Claim~\ref{eq:boundonE'} that the error terms $E'$ is much smaller than the leading term in~\eqref{expdeltaZ}. Therefore, we conclude that~\eqr{suffY} holds, and hence $(X'_m: i \le m \le \psi'_i)$ is a supermartingale.
	\end{proof}

	Now we proceed to the proof of Lemma~\ref{lem:Ychange}. Recall from~\eqref{eq:R_m} that
	\begin{align*}
		R_m' := g_Y(m+1)\ty(m+1)-g_Y(m)\ty(m).
	\end{align*}
	
	\begin{proof}[Proof of Lemma~\ref{lem:Ychange}]
		We observe that the result holds trivially if $m \ge \tau$ or $\tau = 0$, since the increment $X'_{m+1}-X'_m$ is identically $0$. We therefore assume that $\tau > m \ge 0$ throughout the proof.

		We begin with part $(i)$. For every $m< \tau$, it suffices to show that
		$|Y_e(G_{m+1})-Y_e(G_{m})|$ and $| \ty(m+1)-\ty(m)+R_m'|$ are bounded by $k^2n^{\delta}\max\{1,\Ex{Y_S(G_0)}\}$.
		Let $K$ denote the $k$-clique chosen at step $m$. Then, we have
		\begin{align}\label{ineqdeltaYeGm}
			|Y_e(G_{m+1})-Y_e(G_{m})| \,  \le \, \sum_{f\in K\setminus\{e\}} Y_{e,f}(G_{m}) 
			\, \le \, \binom{k}{2}\max\big\{Y_{S}(G_0):|S|=3\big\}.
		\end{align}
		Since $\tau >0$, $G_0$ satisfies all items (1)--(4) in Proposition~\ref{prop:I}, and hence we have $Y_{S}(G_{m})\le Y_{S}(G_{0}) \le n^{\delta}\max\{1,\Ex{Y_S(G_0)}\}$.
		This combined with~\eqref{ineqdeltaYeGm} provides the required upper bound for $|Y_e(G_{m+1})-Y_e(G_{m})|$.
		
		Now we bound $| \ty(m+1)-\ty(m)+R_m'|$.
		By Claims~\ref{claim:deltaty} and~\ref{eq:boundonE'}, we have
		\begin{align}\label{eq:boundY+gY}
			|\ty(m+1)-\ty(m)+R_m'| \le \dfrac{4k^4\ty(m)}{e(G_m)}.
		\end{align}
		As $m \le m_{*}$, we have $e(G_m)\ge pn^2/4$ and $\ty(m) \le \ty(0) \le 2 \Ex{Y_e(G_0)}$, which imply that
		\begin{align*}
			|\ty(m+1)-\ty(m)+R_m'| & \le \dfrac{2^5k^4\Ex{Y_e(G_0)}}{pn^2}\, \le \, k^2n^{\delta}\max\{1,\Ex{Y_S(G_0)}\}.
		\end{align*}

		Now let us prove part $(ii)$.
		By using~\eqref{eq:boundY+gY}, it suffices to show that
		\begin{align*}
			\Ex{| Y_e(G_{m+1})-Y_e(G_m)|\big|G_m}=  O \left (\dfrac{k^4\ty(m)}{pn^2}\right).
		\end{align*}
		Recall that, by Proposition~\ref{prop:changeY}, we have
		\begin{align}\label{eq:expYeGm}
			\Ex{|Y_{e}(G_{m+1})-Y_{e}(G_{m})|\big|G_{m}}\, =\, \dfrac{1}{Q(G_{m})}{\sum}_{f\in E(G_{m})\setminus\{e\}}Y_{e,f}(G_{m})Y_{f}(G_{m})\, \pm\, E_3',
		\end{align} 
		where the error term $E_3'$ is given in~\eqref{errorE3}.
		In the proof of Lemma~\ref{lem:Ysm}, we have seen in~\eqref {eq:e3prime} that  $E_3' \ll g_Q(m)\ty(m)/(pn^2)$. As $g_Q(m) \ll 1 \le k^4$, we then have 
        $E_3' \le k^4\ty(m)/(pn^2)$.
		Thus, it remains to bound the leading term in~\eqref{eq:expYeGm}.
		
		Observe that
		\begin{align*}
			\sum \limits_{f \in E(G_m)\setminus \{e\}} Y_{e,f}(G_m) = 
			\left( \binom{k}{2}-1\right)Y_e(G_m).
		\end{align*}
		Moreover, on the event $\{m< \tau\}$ we have $Y_{f}(G_{m})\le (1+g_Y(m))\ty(m)$ for all $f\in E(K_n)$.
		These imply that the leading term of $\Ex{|Y_{e}(G_{m+1})- Y_{e}(G_{m})|\big|G_{m}}$ in~\eqref{eq:expYeGm} is upper bounded by
		\begin{align*} 
			(1+g_Y(m)) \left( \binom{k}{2}-1\right)
			\dfrac{Y_e(G_m)\ty(m)}{Q(G_{m})}.
		\end{align*} 
		As $Q(G_{m})\ge (1-g_Q(m))\tq(m)$ and $Y_{e}(G_{m})\le (1+g_Y(m))\ty(m)$ for every $m<\tau$,
		by combining~\eqref{eq:expYeGm} with the bound on the leading term we obtain
		\begin{align*}
			\Ex{|Y_{e}(G_{m+1})- Y_{e}(G_{m})|\big|G_{m}}\, & \le \, 
			\dfrac{4 k^2(\ty(m))^2}{\tq(m)}
			\, \pm\, E_3'\, \le \, \dfrac{8 k^4\ty(m)}{pn^2}.
		\end{align*} 
		This concludes our proof.
	\end{proof}

	\section{Upper bounds}\label{sec:upper}

	In this section we prove Theorem~\ref{thm:upper},
	which gives an upper bound on the size of the maximum $k$-clique packing in $G(n,p)$.  We first recall some notation.  We denote by $\NN(n,k,t)$ the number of sets of $t$ edge-disjoint $k$-cliques in $K_n$, and define $\zeta(n,k,t)$ to be such that
	\[\NN(n,k,t)\, =\, \dfrac{1}{t!}\binom{n}{k}^t \zeta(n,k,t).\]
	We have
	\[t_0:= \dfrac{5(\gamma-2)}{1-p} \cdot \dfrac{pn^2 \log{n}}{k^4}\]
	and define $\beta=\beta(n,k)$ to be maximal such that the inequality
	\eq{betar}
	\zeta(n,k,t)\, \le \, \exp\left(\frac{-\beta t^2k^4}{n^2}\right) 
	\eqe
	holds for all $t\le t_0$.    For the reader's convenience, we restate Theorem~\ref{thm:upper}.

	\upper*

	As in~\cite{AK}, our proof uses the first moment method, but with one small tweak -- we work in $G(n,m)$ instead of $G(n,p)$.
	As usual, $G(n,m)$ denotes the random graph selected uniformly at random from graphs with $n$ vertices and $m$ edges.   
	Let us briefly explain how these two models are related.  One may reveal a random graph $G\sim G(n,p)$ by first revealing the number of edges $e(G)=m$, and then revealing $G$.  The conditioned random graph (condition on $m$, the number of edges) is then distributed as $G(n,m)$.
	One may also relate the probabilities of an event in the two models.
	Let $\Pr_p$ be the probability measure associated with $G(n,p)$ and $\Pr_m$ the probability measure associated with $G(n,m)$.
	For an event $E$, we have
	\[
	\Pr_p(E)\, =\, \sum_{m=0}^{N}\pr{Bin(N,p)=m}\Pr_m(E),
	\]
	where $N:=\binom{n}{2}$.
	From this it follows that if $E$ is an increasing event, then for every $m_{+}\in \{0,\dots ,N\}$ we have
	\begin{align}\label{eq:GnpGnm}
		\Pr_p(E)\, \le\, \Pr_{m_+}(E)\, +\, \pr{Bin(N,p)>m_+}.
	\end{align}
	
	The elementary observation in~\eqref{eq:GnpGnm} turns out to be surprisingly useful.  
	For $t \in \mathbb{N}$,
	let $E_t$ to be the event that the graph contains a family of $t$ edge-disjoint $k$-cliques.
	Now, define $m_+:= \lfloor pN+(pN)^{3/4} \rfloor$.  By Chernoff's inequality, we have $\pr{Bin(N,p)>m_+}\le \exp(-\Omega(\sqrt{pn^2})) = \exp(-n^{1+o(1)})$.
	As $E_t$ is an increasing event, by~\eqref{eq:GnpGnm} it follows that $\Pr_p(E_t)\, \le\, \Pr_{m_+}(E_t)\, +\, \exp(-n^{1+o(1)})$.

	Let $X_t$ be the variable which counts the number of collections of $t$ edge-disjoint $k$-cliques.
	By Markov's inequality, we have $\Pr_{m_+}(E_t)\le  \Exm{X_t}$ and,
	by putting all pieces together, we obtain
	\begin{align}\label{eq:plee}
		\Pr_p(E_t)\, &\le\, \Exm{X_t}\, +\, \exp(-n^{1+o(1)}).
	\end{align}
	Thus, in order to prove our theorem, we must show that $\Exm{X_t}=o(1)$ when we take 
	\begin{align}\label{eq:valueoft}
		t:=\dfrac{(\gamma -2)(4+\eps)}{1+(4\beta-1)p} \cdot \dfrac{pn^2}{k^4}\log n.
	\end{align}
	
	From now on, our goal is to upper bound $\Exm{X_t}$.
	The first step is to fix a family $\K= \{K^1,\dots, K^t\}$ of $t$ edge-disjoint $k$-cliques and bound the probability that their union of edges $E_{\K} := E(K^1) \cup \dots \cup E(K^t)$ is in $G(n,m)$.
	For simplicity, let us write $(n)_a$ for the falling factorial $n(n-1)\dots (n-a+1)$.
	The probability that $E_{\K}\se G(n,m)$ is precisely
		\begin{align}\label{eq:EKinGnM-1}
		\pr{E_{\K}\se G}\, &=\, \dfrac{(m_+)_{t\binom{k}{2}}}{(N)_{t\binom{k}{2}}}\nonumber \\ 
		&=\, \left(\dfrac{m_{+}}{N}\right)^{t\binom{k}{2}} \cdot  \dfrac{\prod_{i=0}^{t\binom{k}{2}-1}(1-i/m_{+})}{\prod_{i=0}^{t\binom{k}{2}-1}(1-i/N)}\nonumber \\
		&=\, \left(\dfrac{m_{+}}{N}\right)^{t\binom{k}{2}} \exp\left(\sum_{i=0}^{t\binom{k}{2}-1}\big(\log(1-i/m_{+})-\log(1-i/N))\right)\, .
	\end{align}

	As $x+\log(1-x)$ is a decreasing function of $x$, for $x \in (0,1)$, we have
\begin{align*}
	\log(1-i/m_{+})-\log(1-i/N)\, \le\, \frac{i}{N}\, -\, \frac{i}{m_{+}}\, .
\end{align*}
Using this, and the fact that $m_+N^{-1}\le p(1+O((pN)^{-1/4}))=p\exp(O((pN)^{-1/4}))$, we obtain
\begin{align*}
	\pr{E_{\K}\se G} \, &=\, p^{t\binom{k}{2}}
	\, \exp\left(O\left(\dfrac{tk^2}{(pN)^{1/4}}\right)\, -\sum_{i=0}^{t\binom{k}{2}-1}\left(\frac{i}{m_{+}}-\frac{i}{N}\right)\right)\\ 
	& = \, p^{t\binom{k}{2}}
	\exp\left(
	\,-\, \binom{t\binom{k}{2}}{2}\cdot \left(\frac{1}{m_{+}}-\frac{1}{N}\right)
	\,+\, O\left(\dfrac{tk^2}{(pN)^{1/4}}\right)\right).
\end{align*}

As $t= n^{2+o(1)}$,	we have
\[
tk^2/(pN)^{1/4} \, =\,  n^{3/2+o(1)}\, =\, o\left(\frac{t^2k^4}{pN}\right)\, .
\]
We may also use that 
\[
\dfrac{1}{m_{+}} - \dfrac{1}{N} = \dfrac{1-p}{pN} + o\left(\dfrac{1}{pN}\right).\]
It follows that
\begin{align}\label{eq:propEK}
	\pr{E_{\K}\se G} \,\le\,	p^{t\binom{k}{2}}\exp\left(-\dfrac{(1-p)t^2k^4}{8pN}\, +\, o\left(\frac{t^2k^4}{pN}\right)\right).
\end{align}

	We are now ready to bound the expected value $\Ex{X_t}$.  There are $\NN(n,k,t)$ different sets of $t$ edge disjoint $k$-cliques, and so, $\Ex{X_t}$ is equal to $\NN(n,k,t)$ times the probability  $\pr{E_{\K}\se G(n,m)}$.  From~\eqref{eq:propEK}, it follows that
	\begin{align}\label{eq:expXt}
		\Exm{X_t}\, =\, \binom{n}{k}^t (t!)^{-1}\zeta(n,k,t)p^{t\binom{k}{2}}\exp\left(-\frac{(1-p)t^2k^4}{4pn^2}\, +\, o\left(\frac{t^2k^4}{pn^2}\right)\right).
	\end{align}
	Now, we replace
	$n^{\gamma}=\binom{n}{k}p^{\binom{k}{2}}$ and $t! \sim \sqrt{2\pi t}\cdot (t/e)^{t} \ge \exp(t\log{t}\, -\, t)$ above to obtain
	\begin{align}\label{eq:expXt-2}
		\Exm{X_t}\, \le \, \zeta(n,k,t)\exp\left(\gamma t\log n \, + \, t \, - \, t  \log t \, - \, \dfrac{(1-p)t^2k^4}{4pn^2}\, +\, o\left(\frac{t^2k^4}{pn^2}\right)\right).
	\end{align}
	
		We now use~\eqr{betar} and that $t \le t_0$ to bound $\zeta(n,k,t)$, and absorb the $t$ into the $o$ error term.  We have
	\begin{align*}
		\Exm{X_t}\, &\le \, \exp\left(-t\left[\frac{tk^4}{pn^2}\left(\beta p\, +\, \frac{1-p}{4}\, +\, o(1)\right)\, -\, \gamma \log n  \, + \,  \log t \right]\right) \\
		& =\, \exp\left(-t\left[\frac{tk^4}{4pn^2}(1+(4\beta-1)p+o(1))\, -\, \gamma \log n  \, + \,  \log t \right]\right)\, .
	\end{align*}
	Substituting in the value of $t$ from~\eqr{valueoft} and using that $t=n^{2+o(1)}$, we obtain that the expression in the square brackets is at least $\eps(\gamma-2)\log{n}/6$, for all sufficiently large $n$. 
	Therefore,
	 \[
	 \Exm{X_t}\, \le \, \exp\left(-\dfrac{\eps t(\gamma-2)\log{n}}{6} \right)\, =\, \exp(-n^{2+o(1)})\, ,
	 \]
	 as required.

	\section*{Acknowledgments}

We thank the anonymous referees for their careful reading and many helpful suggestions.

	\appendix

	\section{Expected number of near maximum cliques} \label{ap:expected}

	Recall that $E_p(n,k)$ denotes the expect number of $k$-cliques in $G(n,p)$ and $k_0 = k_0(n,p)$ is the least integer for which the expected number of $k$-cliques is less than 1.
	We have the following lemma.

	\begin{lemma}\label{lemma:expectation-k-c}
Let $C\in \mathbb{N}$.	 If $k = k_0 - C$, then $n^{C-1+o(1)} \le E_p(n,k) \le n^{C+o(1)}$.
	\end{lemma}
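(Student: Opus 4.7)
\medskip

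\noindent\textbf{Proof proposal for Lemma~\ref{lemma:expectation-k-c}.}  The plan is to express $E_p(n,k_0-C)$ as a telescoping product starting from $E_p(n,k_0)$, using the elementary identity
\[
\frac{E_p(n,k-1)}{E_p(n,k)} \, = \, \frac{k}{n-k+1} \cdot p^{-(k-1)} \, =: \, r_k \, .
\]
Writing
\[
E_p(n,k_0 - C)\, =\, E_p(n,k_0)\, \prod_{j=k_0-C+1}^{k_0} r_j,
\]
reduces the problem to (i) estimating each factor $r_j$ for $j$ at constant distance from $k_0$, and (ii) bounding $E_p(n,k_0)$ from above and below.

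For (i), I would use the known asymptotic $k_0 = (2+o(1))\log_{1/p} n$ (Matula; Bollobás--Erdős), which gives $p^{-(k_0-1)} = n^{2+o(1)}$.  For any $j$ with $k_0 - j$ bounded, the same identity yields $p^{-(j-1)} = n^{2+o(1)}$, since multiplying by at most $p^{-C}$ changes the exponent by only $o(1)$ (under any regime covered by the paper, in particular $p=n^{o(1)}$ and $p$ constant).  Combined with $k/(n-k+1) = n^{-1+o(1)}$ (as $k = O(\log n)$), we obtain $r_j = n^{1+o(1)}$ for every $j \in \{k_0-C+1,\ldots,k_0\}$, and hence
\[
\prod_{j=k_0-C+1}^{k_0} r_j \, =\, n^{C+o(1)}.
\]

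For (ii), the definition of $k_0$ gives directly $E_p(n,k_0) < 1$, which yields the upper bound $E_p(n, k_0 - C) \le n^{C+o(1)}$.  For the lower bound, the same definition gives $E_p(n,k_0-1) \ge 1$, and since $E_p(n,k_0-1) = r_{k_0} E_p(n,k_0)$ we obtain $E_p(n,k_0) \ge r_{k_0}^{-1} = n^{-1+o(1)}$, whence $E_p(n,k_0-C) \ge n^{C-1+o(1)}$.

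The argument is essentially bookkeeping and involves no real obstacle; the only point requiring care is to verify that the $o(1)$ terms in $p^{-(j-1)} = n^{2+o(1)}$ and in $r_j = n^{1+o(1)}$ hold uniformly over the $C$ indices in the product, so that the product accumulates into a single $n^{C+o(1)}$.  This is immediate since $C$ is a fixed constant (or, more generally, any regime where $C-k_0+k$ is bounded).
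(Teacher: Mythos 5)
Your proposal is correct and is essentially the paper's own argument: the paper likewise computes the one-step ratio $E_p(n,k)/E_p(n,k-1)=\frac{n-k+1}{k}p^{k-1}=n^{-1+o(1)}$ (using $k_0=(2+o(1))\log_{1/p}n$), telescopes to get $E_p(n,k_0-C)=E_p(n,k_0)\cdot n^{C+o(1)}$, and then sandwiches $E_p(n,k_0)$ between $n^{-1+o(1)}$ and $1$ via the defining inequalities $E_p(n,k_0)<1\le E_p(n,k_0-1)$. The only cosmetic difference is that you work with the reciprocal ratio $r_k$.
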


	\begin{proof}
		If $k = (2+o(1))\log_{\frac{1}{p}}n$, then we have 
		\begin{align*}
			\dfrac{E_p(n,k)}{E_p(n,k-1)} &= \dfrac{\binom{n}{k}}{\binom{n}{k-1}} p^{\binom{k}{2}-\binom{k-1}{2}} = 
			\dfrac{n-k+1}{k}p^{k-1} = n^{-1+o(1)}.
		\end{align*}
		As we have $k_0 = (2+o(1))\log_{\frac{1}{p}}n$ (see~\cite{BE,Matula70,Matula72}),
		it follows that 
		\begin{align}\label{eq:Ep}
			E_p(n,k_0-C) = E_p(n,k_0) \cdot n^{C+o(1)}
		\end{align}
		for every constant $C \in \mathbb{N}_{\ge 1}$.
		Now, note that since $E_{p}(n,k_0) < 1 \le E_{p}(n,k_0-1)$, it follows from~\eqref{eq:Ep} that 
		\begin{align}\label{eq:Epk0}
			n^{-1+o(1)} \le E_p(n,k_0-1) \cdot  n^{-1+o(1)} \le E_p(n,k_0) < 1.
		\end{align}
		The lemma follows by combining~\eqref{eq:Ep} and~\eqref{eq:Epk0}.
	\end{proof}

	\section{Proof of Lemma~\ref{lem:onestep}}\label{ap:one-step}
	
	For the reader's convenience, we restate Lemma~\ref{lem:onestep}, which remains to be proved. 
	
	\onestep*

	An important role in our analysis is played by the quantities $\xi_i$, which are defined as
	\begin{align}\label{eq:xi}
		\xi_i \, := \, \frac{n^{k-i}}{(k-i)!}p^{\binom{k}{2}-\binom{i}{2}} \, \ge \, \binom{n}{k-i}p^{\binom{k}{2}-\binom{i}{2}}
	\end{align}
	for each $i=s,\dots,k$ (recall that $s$ denotes the size of $|S|$).   
	Note that $\xi_s^{-1} \cdot \Ex{Y_S(G_0)}\to 1$ as $n$ tends to infinity, and $\xi_k=1$.   Our next lemma just collects some other properties of the sequence $(\xi_i)_{i=s}^k$.
	
	\begin{lemma}\label{lemma:xis} There exists $n_0 \in \mathbb{N}$ such that the following holds for all $n \ge n_0$:
		\begin{enumerate}
			\item[$(1)$] \label{smallxi} $\xi_i\, \le\, n^{-(i-s)/2}\xi_s$ for all $i=s,\dots ,\lfloor k/8 \rfloor$;
			\vspace*{0.1cm}
			\item[$(2)$]  \label{midxi2} $\xi_b\,\le \, \max\{\xi_a,\xi_{c}\}$ for all $s\le a \le b \le c \le  k-D$.
			\vspace*{0.1cm}
			\item[$(3)$]  \label{largexi2} $\xi_i\, \le\, n^{-(k-i)/8}\max\{\xi_s,\xi_{k}\}$ for all $i=\lceil 7k/8 \rceil,\ldots,k.$
		\end{enumerate}
		In particular, 
		\begin{enumerate}
			\item[$(4)$] \label{particular} $\xi_i\, \le\, n^{-D/8}\max\{\xi_s,\xi_{k}\}$ for all $i=D,\dots ,k-D$;
		\end{enumerate}
	\end{lemma}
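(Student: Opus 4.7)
The plan hinges on analysing the multiplicative ratios $r_i := \xi_{i+1}/\xi_i = (k-i)p^{-i}/n$ and the ratio-of-ratios $r_{i+1}/r_i = (k-i-1)/[p(k-i)]$. A direct check shows $r_{i+1}/r_i \ge 1$ exactly when $(1-p)(k-i) \ge 1$, and since $D \ge (1-p)^{-1}$ by its definition, this inequality holds on the entire range $i \le k-D$. Hence the sequence $(\log \xi_i)_{i\le k-D}$ is discretely convex. Part~(2) follows immediately, because a convex discrete sequence on an interval is bounded above by the maximum over its endpoints.

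For part~(1) the plan is to bound each $r_j$ individually for $s \le j \le \lfloor k/8 \rfloor - 1$. Using $k \le 2.1 \log_{1/p} n$ one gets $p^{-j} \le p^{-k/8} \le n^{21/80}$, which combined with $(k-j)/n \le n^{-1+o(1)}$ (recall $k = O(\log n)$) yields $r_j \le n^{-1/2}$ for $n$ sufficiently large. Telescoping then gives $\xi_i \le n^{-(i-s)/2}\xi_s$.

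Part~(3) is the main obstacle, since log-convexity alone cannot supply the $n^{-(k-i)/8}$ factor. Writing $m = k-i$, the plan is to combine the explicit formula $\xi_{k-m} = n^m p^{mk - m(m+1)/2}/m!$ with the identity $p^{\binom{k}{2}} = (1+o(1))n^{\gamma-k}k!$ (obtained from $E_p(n,k) = n^\gamma$ and $\binom{n}{k} = (1+o(1))n^k/k!$) to rewrite $\xi_{k-m}$ as $n^{m(2\gamma/k - 1) + o(m)}$; here the $o(m)$ absorbs the factor $(k!)^{2m/k}/m!$, which is $n^{o(m)}$ because $k = O(\log n)$. Since $\gamma \le (1+o(1))k/2$, in particular $2\gamma/k - 1 \le o(1)$, one already has $\xi_{k-m} = n^{o(m)}$. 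I then split into two cases. If $\xi_s \ge n^m$, then $n^{-m/8}\xi_s \ge n^{7m/8}$ comfortably dominates $\xi_{k-m} = n^{o(m)}$. If $\xi_s < n^m$, then $\xi_s = n^{\gamma-s+o(1)}$ forces $\gamma < m+s \le k/8 + O(1)$, whence $2\gamma/k - 1 \le -3/4 + o(1)$ and the asymptotic directly yields $\xi_{k-m} \le n^{-m/8}$ for $n$ large. The subrange $\lceil 7k/8 \rceil \le i \le k - D$ is handled by first applying log-convexity to reduce the bound on $\xi_i$ to a bound on $\xi_{k-D}$, which falls under the same case analysis.

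Finally, part~(4) follows by decomposing $[D, k-D]$ as $[D, \lfloor k/8 \rfloor] \cup [\lfloor k/8 \rfloor, \lceil 7k/8 \rceil] \cup [\lceil 7k/8 \rceil, k-D]$ and applying (1), (2), and (3) on the three pieces; the definition $D \ge 4\ell D_0 + (1-p)^{-1} + s$ ensures in particular that $D \ge 8s/7$, so the exponent produced by (1) is at least $D/8$.
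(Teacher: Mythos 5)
Your treatment of parts (1), (2) and (4) follows the paper's route almost verbatim: the same individual bound $\xi_{i+1}/\xi_i\le n^{-1/2}$ on the initial range, the same log-convexity of $(\xi_i)$ up to $i\le k-D$ via the ratio-of-ratios $(k-i-1)/(p(k-i))$, and the same three-piece decomposition of $[D,k-D]$. (Two small arithmetic slips there: $D\ge 8s/7$ only gives exponent $(D-s)/2\ge D/16$ from part (1); you need $D\ge 4s/3$ to reach $D/8$, which does hold since $D\ge s+1$ and $s\le 3$ in all applications.) For part (3) you take a genuinely different route: the paper splits on whether $i\ge\tfrac{7}{6}\log_{1/p}n$, in which case every ratio $\xi_{j+1}/\xi_j$ with $j\ge i$ is at least $n^{1/6}$ and one compares with $\xi_k$, and otherwise deduces $k<\tfrac43\log_{1/p}n$, hence $\xi_s\ge n^{k/4}$, and uses the trivial bound $\xi_i\le n^{k-i}$. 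Your version instead expresses $\xi_{k-m}$ asymptotically in terms of $\gamma$ and splits on whether $\xi_s\ge n^m$; this is a workable alternative and arguably more transparent about where the exponent $1/8$ comes from.

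However, the key asymptotic in your part (3) is wrong as stated. Since $mk-\tfrac{m(m+1)}{2}=m(k-1)-\binom{m}{2}$ and $p^{m(k-1)}=\big(p^{\binom{k}{2}}\big)^{2m/k}$, the exact identity is $\xi_{k-m}=\tfrac{n^m}{m!}\big(p^{\binom{k}{2}}\big)^{2m/k}\,p^{-\binom{m}{2}}$: you have dropped the factor $p^{-\binom{m}{2}}=n^{\binom{m}{2}/\log_{1/p}n}$. Since $\log_{1/p}n\le k$, this factor is at least $n^{m(m-1)/(2k)}$, which for $m=\Theta(k)$ is $n^{\Theta(m)}$, not $n^{o(m)}$; so $\xi_{k-m}=n^{m(2\gamma/k-1)+o(m)}$ is false at the top of your range. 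The argument survives because on $m\le k/8$ this factor is at most $n^{(2.1/16)m}$ (using $k\le 2.1\log_{1/p}n$), and both of your cases have enough slack to absorb an extra $n^{0.14m}$; but the correction must be made explicit. Separately, your convexity detour for the subrange $\lceil 7k/8\rceil\le i\le k-D$ does not work: part (2) only bounds $\xi_i$ by $\max\{\xi_a,\xi_{k-D}\}$, which contains no decaying factor at all if $\xi_a=\xi_s$ is the maximum, and in any case cannot produce the $i$-dependent bound $n^{-(k-i)/8}$ that part (3) asserts when $k-i>D$. The fix is simply to drop the detour and run your two-case analysis directly for every $m=k-i\le k/8$; it applies uniformly on that whole range.
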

	
	\begin{proof}
		Note that the ratio of two consecutive terms is given by
		\begin{align}\label{eq:xi-ratio}
			\frac{\xi_{i+1}}{\xi_{i}}\, =\, \frac{k-i}{p^i n}\, .
		\end{align}
		We shall use~\eqref{eq:xi-ratio} to first compare $\xi_i$ with $\xi_s$ for $s \le i \le k/8$.  Fix $s \le i \le k/8$.
		As $k \le 2.1\log_{1/p} n$, we have $i\le( \log_{1/p} n) /3$, and so  $p^in \ge n^{2/3}$.
		It follows that the ratio $\xi_{i+1}/\xi_i$ is at most $n^{-1/2}$.  In particular,
		\eq{smallxi}
		\xi_i\, \le\, n^{-(i-s)/2}\xi_s \qquad\text{for all } i=s,\dots ,  \lfloor k/8 \rfloor.
		\eqe
		This proves the first item of our lemma.

		Now, we compare $\xi_i$ with $\xi_k$ for each $i \in \{\lceil 7k/8 \rceil,\ldots,k\}$.
		If $i \ge \frac{7}{6}\log_{1/p}n$, then $\xi_{i+1} \ge n^{1/6}\xi_i$, and hence $\xi_k \ge n^{(k-i)/6} \xi_i$.
		Otherwise, we have $i < \frac{7}{6}\log_{1/p}n$, and hence $k \le 8i/7 < \frac{4}{3} \log_{1/p}n$. 
		As $k < \frac{4}{3}\log_{1/p} n$, we have $p^{\frac{k-1}{2}}\ge n^{-\frac{2}{3}}$, and hence it follows that
		\begin{align*}
			\xi_s \ge \binom{n}{k-s} p^{\binom{k}{2}-\binom{s}{2}} \ge \dfrac{n^{k-s}}{k^k}p^{\binom{k}{2}}n^{o(1)}\ge \left(\dfrac{np^{\frac{k-1}{2}}}{k}\right)^k n^{-s+o(1)} \ge n^{k/4}
		\end{align*}
		for all sufficiently large $n$.
		Recall that $\xi_k =1$ and every ratio $\xi_{i+1}/\xi_i$ is at least $n^{-1}$. And so, 
		\begin{align}\label{eq:xikandxis}
			\xi_i \le n^{k-i} \le n^{k/8} \le n^{-k/8}\xi_s.
		\end{align}
		This proves the third item of our lemma.
		
		To prove the second item, note that by~\eqref{eq:xi-ratio} we have that the ratio of two consecutive ratios, say $\xi_{i+1}/\xi_i$ over $\xi_i/\xi_{i-1}$ is 
		\[
		\dfrac{\xi_{i+1}}{\xi_i} \cdot \left ( \dfrac{\xi_i}{\xi_{i-1}} \right )^{-1} = \frac{k-i}{k-i+1}p^{-1},
		\vspace*{0.2cm}
		\]
		which is at least $1$ for all $s<i \le k-(1-p)^{-1}$.
		As $\xi_{s+1}/\xi_s <1$ and the ratios are increasing, there must exist an index $j$ such that $(\xi_i)_{i=s}^j$ is non-increasing and $(\xi_i)_{i=j}^{k-D}$ is  non-decreasing (recall that $D >(1-p)^{-1}$). 
		It follows from this that
		\eq{midxi}
		\xi_b\,\le \, \max\{\xi_a,\xi_{c}\} \qquad \text{for all} \, s\le a \le b \le c \le k-D\, .
		\eqe
		This proves the second item, which finishes our proof.
	\end{proof}
	
	\begin{proof}[Proof of Lemma~\ref{lem:onestep}]
		We first prove a general bound on the conditional probabilities. 
		Let $i\in \{s,\dots ,k\}$ and let us consider $k$-cliques $K$ (containing $S$) with $|K\cap U |=i$.  Since $|U|\le kr\le k\ell$, we have that there are at most $\binom{k\ell}{i-s}\binom{n}{k-i}$ ways to select $K$, and the conditional probability that $K$ is in $G_S$ is at most $p^{\binom{k}{2}-\binom{i}{2}}$.  Thus,
		\begin{align}
			\sum_{K:|K \cap U|=i}\mathbb{P}\left(K\, \subseteq\, G_S\, \Big|\, U\, \subseteq\, G_S\, \right)\, &\le\, \binom{k\ell}{i-s}\binom{n}{k-i}p^{\binom{k}{2}-\binom{i}{2}}\nonumber\\ & \le\, \binom{k\ell}{i-s}\xi_i\, .\label{eq:xi-2}
		\end{align}
		Item $(i)$ follows by summing these contributions. 
		By Lemma~\ref{lemma:xis}, we have $\xi_i \le \max \{\xi_s, \xi_k\}$ for every $i \in \{s,\ldots,k\}$ and by~\eqref{eq:xi} we have $\xi_s^{-1} \cdot \Ex{Y_S(G_0)}\to 1$,
		and hence
		\[
		\sum_{i=s}^{k}\binom{k\ell}{i-s}\xi_i\, \le\, \max\{\xi_s,\xi_k\} \sum_{i=s}^{k} \binom{k\ell}{i-s} \,\le\, (2e\ell)^{k} \max\big\{\Ex{Y_S(G_0)},1\big\}\, .
		\]
		
		Item $(ii)$ follows by summing the contributions above only in the range $s\le i\le D$.
		By Lemma~\ref{lemma:xis}, we have $\xi_i \le n^{-(i-s)/2} \xi_s$ for every $i \in \{s,\ldots,D\}$, and hence
		\[
		\sum_{i=s}^{D}\binom{k\ell}{i-s}\xi_i\, \le\,  \sum_{i=s}^{D}(k\ell)^{i-s}\xi_i \,\le\, \sum_{i=s}^{D}\left(\frac{k\ell}{n^{1/2}}\right)^{i-s}\xi_s\, \le\, 2\Ex{Y_S(G_0)}\, .
		\]
		In the last inequality, we used that $k = O( \log_{1/p_{+}}n)$ and that $\xi_s^{-1} \cdot \Ex{Y_S(G_0)}\to 1$.
		
		Item $(iii)$ follows by summing ``middling'' terms.
		By item ($ii$) of Lemma~\ref{lemma:xis}, we have $\xi_i \le \max \{ \xi_{D}, \xi_{k-D}\}$ for all $i \in \{D+1,\ldots,k-D-1\}$.
        Moreover, by items $(i)$ and $(iii)$ of Lemma~\ref{lemma:xis}, we have
        \begin{align*}
            \xi_D \le n^{-\frac{D-s}{2}}\xi_s \le n^{-D/8} \xi_s \qquad \text{and}  \qquad \xi_{k-D} \le n^{-D/8}\max\{\xi_s,\xi_k\},
        \end{align*}
        and hence it follows that $\xi_i \le n^{-D/8}\max\{\xi_s,\xi_k\}$ for all $i \in \{D+1,\ldots,k-D-1\}$. Thus,
		\[
		\sum_{i=D+1}^{k-D-1}\binom{k\ell}{i-s}\xi_i\, \le\, 2^{k\ell} \sum_{i=D+1}^{k-D+1}\xi_i \,\le\, 2^{k\ell} k n^{-D/8}\max\{\xi_s,\xi_k\}\, \le\, n^{-D/16}\max\big\{\Ex{Y_S(G_0)},1\big\}\, .
		\]
		In the last inequality, we used the definition of $D$.
		
		For the final inequality of the lemma we may assume that $(K^1,\dots ,K^r)$ is simple.  
		We shall take into account the maximum value of $j^{*}<r$ such that $(K^1,\dots ,K^{j^*+1})$ is small.
		Note that such a $j^*$ always exists as $(K^1)$ is trivially small. 
		For simplicity, denote $A = K^1 \cup \dots \cup K^{j^*}$ and $B =  K^{j^*+2} \cup \dots \cup K^{r}$. 
		By using that the sequence $(K^1,\dots ,K^t)$ is large for every $t \ge j^{*}+2$, we have 
		\[\left| V(K^{t}) \setminus \bigcup_{i=1}^{t-1} V(K^{i}) \right| \le D\]
		for every $t \ge j^*+2$, and hence it follows that $|B \setminus (A \cup K^{j^*+1})| \le D \ell$.

		Now, let $K^{r+1}$ be a clique in $\C_l$.
		As $(K^1,\dots ,K^{r+1})$ is simple and $(K^1,\dots ,K^{j_{*}+1})$ is small, we have $|K^{r+1} \cap A| \le D$, and hence 
        \begin{align*}
            |K^{r+1} \cap (A \cup B) \setminus K^{j^*+1}| \le |B \setminus (A \cup K^{j^*+1})| + |K^{r+1} \cap A|  \le D(\ell+1).
        \end{align*}
		Moreover, as $K^{r+1}$ intersects $K^1 \cup \dots \cup K^{r} = A \cup B \cup K^{j^{*}+1}$ in at least $k-D$ vertices, we must have $|K^{r+1}\cap K^{j^{*}+1}|\ge k-D(\ell+2)$.
		From this analysis it follows that the number of choices of $K^{r+1}$ with $|K^{r+1}\cap (K^1 \cup \cdots \cup K^r)|=i$ and $i \ge k-D$ is at most 
		\[
		\binom{k}{2\ell D}\binom{k\ell}{2\ell D}\binom{n}{k-i}\, \le\, (k\ell)^{4\ell D}\binom{n}{k-i}\, .
		\]
		Recall that $U = K^1 \cup \cdots \cup K^r$. 
		By Lemma~\ref{lemma:xis} combined with  $\xi_s^{-1} \cdot \Ex{Y_S(G_0)}\to 1$, we obtain
		\[
		\sum_{K^{r+1}\in \C_l}\mathbb{P}\left(K^{r+1} \, \subseteq\, G_S\, \Big|\, U \, \subseteq\, G_S\, \right)\, \le\, (k\ell)^{4\ell D} \sum_{i=k-D}^{k}\xi_i\, \le\, 2(k\ell)^{4\ell D} \max\big\{\Ex{Y_S(G_0)},1\big\}\, .
		\]
		This completes our proof.
	\end{proof}


\begin{thebibliography}{}
		
		\bibitem{AK} H.~Acan and J.~Kahn, Disproof of a packing conjecture of Alon and Spencer, \emph{Random Structures Algorithms}, \textbf{55} (2019), 531--544.
		
		\bibitem{AS} N.~Alon and J.~Spencer, The Probablistic Method, Wiley, New York, 2008.
		
		\bibitem{ABCHPT21} P.~Allen, J.~B{\"o}ttcher, D.~Clemens, J.~Hladk{\`y}, D.~Piguet, and A.~Taraz,
		The tree packing conjecture for trees of almost linear maximum degree,
		arXiv: 2106.11720 (2021).
		
		\bibitem{ABCT22} P.~Allen, J.~B{\"o}ttcher, D.~Clemens, and A.~Taraz, Perfectly packing graphs with bounded degeneracy and many leaves, \emph{Israel J. Math.}, \textbf{255} (2023), 501--579.
		
		
		\bibitem{ABHP19} P.~Allen, J.~B{\"o}ttcher, J.~Hladk{\`y}, and D.~Piguet. Packing degenerate graphs, \emph{Adv. Math.}, \textbf{354} (2019), 106739.
		
		
		\bibitem{Azuma} K.~Azuma, Weighted sums of certain dependent random variables, \emph{Tohoku Math. J.}, \textbf{19} (1967), 357--367.
		
		\bibitem{BHPT16} J.~Böttcher, J.~Hladký, D.~Piguet and A.~Taraz, An approximate version of the tree packing conjecture, \emph{Israel J. Math.}, \textbf{211} (2016), 391--446.
		
		\bibitem{BD} P.~Bennett and A.~Dudek, A gentle introduction to the differential equation method and dynamic concentration, \emph{Discr. Math.}, \textbf{345} (2022), 113071.
		
		\bibitem{Boh} T.~Bohman, The triangle-free process, \emph{Adv. Math.}, \textbf{221} (2009), 1653--1677.
		
		\bibitem{BFL} T.~Bohman, A.~Frieze and E.~Lubetzky, Random triangle removal, \emph{Adv. Math.}, \textbf{280} (2015), 379--438.
		
		\bibitem{BK1} T.~Bohman and P.~Keevash, The early evolution of the $H$-free process, \emph{Invent. Math.}, \textbf{181} (2010), 291--336.
		
		\bibitem{BK2} T.~Bohman and P.~Keevash, Dynamic Concentration of the Triangle-Free Process, \emph{Random Structures Algorithms}, \textbf{58} (2021), 177--380.
		
		\bibitem{B88} B. Bollobás, The chromatic number of random graphs, \emph{Combinatorica}, \textbf{8} (1988) 49--55. 
		
		\bibitem{BE} B.~Bollobás and P.~Erd\H{o}s, Cliques in random graphs, \emph{Math. Proc. of the Cambridge Phil. Soc.}, \textbf{80} (1976), 419--427.

            \bibitem{DKP24} M.~Delcourt, T.~Kelly and L.~Postle, Clique Decompositions in Random Graphs via Refined Absorption, arXiv:2402.17857 (2024).
		
		\bibitem{DM} J.~Díaz and D.~Mitsche, The cook-book approach to the differential equation method, \emph{Comp. Sci. Rev.}, \textbf{4} (2010), 129--151.
		
		\bibitem{FLM17} A.~Ferber, C.~Lee and F.~Mousset, Packing spanning graphs from separable families, \emph{Israel J. Math.}, \textbf{219} (2019), 959--982.
		
		
		\bibitem{FS19} A.~Ferber and W.~Samotij, Packing trees of unbounded degrees in random graphs.
		\emph{J. London Math. Soc.}, \textbf{99} (2019), 653--677.
		
		
		\bibitem{FGM} G.~Fiz~Pontiveros, S.~Griffiths and R.~Morris, The Triangle-Free Process and the Ramsey Number $R(3,k)$, \emph{Mem. Amer. Math. Soc.}, \textbf{263} (2020), 125pp.
		
		\bibitem{FR85} P.~Frankl and V.~Rödl, Near perfect coverings in graphs and hypergraphs, \emph{Europ. J. Comb.}, \textbf{6} (1985), 317--326.
		
		\bibitem{F} D.A.~Freedman, On tail probabilities for martingales, \emph{Ann. Probab.},  \textbf{3} (1975), 100--118.
		
		\bibitem{FK05} A.~Frieze, and M.~Krivelevich, On packing Hamilton cycles in $\varepsilon$-regular graphs,
		\emph{J. Comb. Theory Ser. B}, \textbf{94} (2005), 159--172.
		
		\bibitem{FK12} A.~Frieze, and M.~Krivelevich, Packing Hamilton cycles in random and pseudo-random hypergraphs, \emph{Random Structures Algorithms}, \textbf{41} (2012), 1--22.
		
		\bibitem{GKLO23} S.~Glock, D.~K{\"u}hn, A.~Lo and D.~Osthus, The existence of designs via iterative absorption: hypergraph $F$-designs for arbitrary $F$, \emph{Mem. Amer. Math. Soc.}, \textbf{284} (2023): 1--131.
		
		\bibitem{Gtr} D. A.~Grable, On random greedy triangle packing, \emph{Electr. J. Combin.}, \textbf{4} (1997), 19pp.
		
		\bibitem{Hoeff} W.~Hoeffding, Probability inequalities for sums of bounded random variables, \emph{J. Amer. Statist. Assoc.}, \textbf{58} (1963), 13--30.
		
		\bibitem{Jan90} S.~Janson, Poisson approximation for large deviations, \emph{Random Structures Algorithms}, \textbf{1} (1990), 221--230.
		
		\bibitem{JLR} S.~Janson, T.~{\L}uczak and A.~Ruci\'nski, \emph{Random graphs}, Wiley-Interscience, 2000.
		
		\bibitem{JR1} S.~Janson and A.~Ruci\'nski, The infamous upper tail, \emph{Random Structures Algorithms}, \textbf{20} (2002), 317--342.
		
		\bibitem{JR2} S.~Janson and A.~Ruciński, The deletion method for upper tail estimates, \emph{Combinatorica}, \textbf{24} (2004), 615--640. 
		
		\bibitem{JKKO19} F.~Joos, J.~Kim, D.~K{\"u}hn and D.~Osthus, Optimal packings of bounded degree trees, \emph{J. Europ. Math. Soc.}, \textbf{21} (2019), 3573--3647.
		
		\bibitem{KS} R.M.~Karp and M.~Sipser, Maximum matchings in sparse random graphs, \emph{Proc. 22nd Ann. IEEE Symp. Found. Comp.} (1981), 364--375.
		
		\bibitem{Keevash} P.~Keevash, The existence of designs, arXiv:1401.3665 (2014).
    
        \bibitem{Keevash2} P.~Keevash, The existence of designs II, arXiv:1802.05900 (2018).
    
		
		\bibitem{KS20} P.~Keevash and K.~Staden, Ringel's tree packing conjecture in quasirandom graphs, \emph{J. Eur. Math. Soc. (JEMS)}, \textbf{27} (2025), 1769--1826.
		
		
		\bibitem{KKOT19} J.~Kim, D.~ K{\"u}hn, D.~Osthus, and M.~Tyomkyn, A blow-up lemma for approximate decompositions,
		\emph{Trans. Amer. Math Soc.}, \textbf{371} (2019), 4655--4742.
		
		\bibitem{K} T. G.~Kurtz, Solutions of ordinary differential equations as limits of pure Markov jump processes, \emph{J. Appl. Probab.}, \textbf{7} (1970), 49--58.
		
		\bibitem{Matula70} D.~Matula, On the complete subgraphs of a random graph, \emph{Proc. 2nd Chapel Hill Conf. Combin. Math. Appl.}, 356--369.
		
		\bibitem{Matula72} D.~Matula, The employee party problem, \emph{Notices Amer. Math. Soc.}, \textbf{19} (1972), 89--156.
		

		\bibitem{MRS16} S.~Messuti, V.~R{\"o}dl and M.~Schacht, Packing minor-closed families of graphs into complete graphs, \emph{J. Combin. Theory Ser. B}, \textbf{119} (2016), 245--265.

		
		\bibitem{MPS21} R.~Montgomery, A.~Pokrovskiy, and B.~Sudakov, A proof of Ringel’s conjecture,
		\emph{Geom. Funct. Anal.}, \textbf{31} (2021), 663--720.
		
		\bibitem{Plucker} J.~Plücker, System der analytischen Geometrie, auf neue Betrachtungsweisen gegründet, und insbesondere eine ausführliche Theorie der Curven dritter Ordnung enthaltend, Duncker und Humblot, 1835.
		
		\bibitem{RTtr} V.~R\"odl and L.~Thoma, Asymptotic packing and the random greedy algorithm, \emph{Random Structures Algorithms}, \textbf{8} (1996), 161--177.
		
		\bibitem{Str} J.~Spencer, Asymptotic packing via a branching process, \emph{Random Structures Algorithms}, \textbf{7} (1995), 167--172.
		
		\bibitem{warnkeL} L.~Warnke, On Wormald's differential equation method, arXiv:1905.08928 (2019).
		
		\bibitem{Wilson} R.~Wilson, An existence theory for pairwise balanced designs, III: Proof of the existence conjectures, \emph{J. Combin. Theory Ser. A}, \textbf{18} (1975), 71--79.
		
		\bibitem{Worm95} N.~Wormald, Differential equations for random processes and random graphs, \emph{Ann. Appl. Probab.}, \textbf{5} (1995), 1217--1235.
		
		\bibitem{Worm99} N.~Wormald, The differential equation method for random graph processes and greedy algorithms, Lectures on Approximation and Randomized Algorithms, PWN Warsaw (1999), 73--155.
		
	\end{thebibliography}
\end{document}